\newtheorem{theorem}{Theorem}[section]
\newtheorem{lemma}[theorem]{Lemma}
\newtheorem{proposition}[theorem]{Proposition}
\newtheorem{corollary}[theorem]{Corollary}
\theoremstyle{definition}
\newtheorem{definition}[theorem]{Definition}
\theoremstyle{remark}
\newtheorem{remark}[theorem]{Remark}
\numberwithin{equation}{section}
\newcommand{\bx}{\mathbf{x}}
\newcommand{\by}{\mathbf{y}}
\newcommand{\bk}{\mathbf{k}}
\newcommand{\bj}{\mathbf{j}}
\newcommand{\bl}{\bm{\ell}}
\newcommand{\bff}{\mathbf{f}}
\newcommand{\bg}{\mathbf{g}}
\newcommand{\bu}{\mathbf{u}}
\newcommand{\bv}{\mathbf{v}}
\newcommand{\bw}{\mathbf{w}}
\newcommand{\bn}{\mathbf{n}}
\newcommand{\dd}{\,\mathrm{d}}
\newcommand{\dx}{\, \mathrm{d} \mathbf{x}}
\newcommand{\dt}{\, \mathrm{d}t}
\newcommand{\ds}{\, \mathrm{d}\sigma}
\newcommand{\divx}{\, \mathrm{div}}
\begin{document}

%
%
%
%
%
%
%
%
%

\title[Spatial analyticity and exponential decay of Fourier modes]{Spatial analyticity and exponential decay of Fourier modes for the stochastic Navier-Stokes equation}

\author{Dan Crisan}

\address{Department of Mathematics\\ Imperial College London\\ SW7 2AZ, UK.}
\email{d.crisan@imperial.ac.uk}

%
%

\thanks{We would sincerely like to thank the referees for the many useful suggestions and corrections in the making of this manuscript.}
\author{Prince Romeo Mensah*}

\address{Department of Mathematics\\ Imperial College London\\ SW7 2AZ, UK.}
\address{AND}
\address{Institute of Mathematics\\ TU Clausthal\\ Erzstraße 1, 38678 \\Clausthal-Zellerfeld,\\ Germany}
\email{prince.romeo.mensah@tu-clausthal.de}
\subjclass{Primary 35R60, 35B65, 35B40; Secondary 35Q35, 76D03}

\keywords{Gevrey regularity, Analytic solution, Decay of Fourier mode}

\date{\today}

\begin{abstract}
{We construct a local in time spatially real-analytic} solution to the 2D and 3D stochastic Navier--Stokes equation driven by a  spatially real-analytic multiplicative and transport noise but emanating from an initial condition that is only required to have bounded enstrophy. 
Under the condition that the solution is global in time,  we also establish the exponential decay of the finite-dimensional Galerkin approximation, { with respect to its maximum wavenumber,} to the strong pathwise solution of the stochastic Navier--Stokes equation. This decay is uniform in time, uniform with respect to the initial enstropy, and {uniform in the noise coefficients.}
\end{abstract}

\maketitle
\section{Introduction}
\noindent The Navier--Stokes equation is a widely used model to describe the evolution of incompressible viscous fluids. It represents a balance law between the forces present in the fluid and the acceleration of the fluid. Classically, the main forces taken into account will consist of the deterministic conservative pressure, non-conservative viscous term, and perhaps, all other external deterministic forces lumped up together.  However, to take uncertainties in the fluid into account, it is desirable to incorporate noise into the system. This noise may be understood as a random force that is a function of the underlining deterministic unknowns.
{Another form of noise is the transport noise of Stratonovich-type, that has the desirable property} of preserving some of the physical laws satisfied by the deterministic system. 
\\
In this work, we are interested in {the regularity of solutions} to the 2D and 3D stochastic Navier--Stokes equation for incompressible fluids. In particular, we explore a specific Gevrey class regularity that renders the solution spatially real-analytic. The analysis of Gevrey class regularity for fluid-dynamic systems has a long history spanning many decades. The fundamental work by Foias and Temam \cite{foias1989gevrey}, in particular, bridged the gap between the functional analysis of general differential equations and the study of regularity for solutions to fluid dynamic equations. Since the work done by Foias and Temam, the treatment of Gevrey class regularity for deterministic fluid dynamic systems has gained considerable attention. We cannot give an exhaustive review but some fundamental works include \cite{bae2012analyticity, cao2000gevrey, ferrari1998gevrey, foias2001navier, kukavica2009radius, levermore1997analyticity, paicu2011analyticity}. 
Unfortunately, very few works exist on the Gevrey class regularity for stochastic systems. From an analytic point of view, the only work we have found is by Mattingly \cite{mattingly2002dissipative} who analysed the Navier--Stokes equation with a white-in-time stochastic forcing. An analytic solution is derived from analytic data and discussions on stationary measures and ergodicity are also presented. From the numerical analysis point of view, a few more results are present \cite{kukavica2018galerkin, lord2004numerical}. In \cite{kukavica2018galerkin}, the authors show the convergence of the Galerkin approximation to the strong solution of the 2D Navier--Stokes equation  defined on a bounded domain with Dirichlet boundary condition and that is driven by a multiplicative noise. However, the rate of convergence is not given and { the convergence holds for the expectation of a logarithmic function of the norm of the difference of the two solutions, rather than for the norm itself.} In \cite{lord2004numerical}, however, the authors give an inverse polynomial rate of convergence for general (viscous) stochastic PDEs driven by additive noise whose coefficients are analytic.
\\
In this work, we analyze a generalized Navier--Stokes equation on a 2D and 3D torus that accounts for all the deterministic and random forces discussed above. We explore its Gevrey class regularity that results in the unknowns becoming spatially real-analytic. In particular, { for data} consisting of just a bounded initial enstrophy and spatially real-analytic forcing terms, { we construct a local in time spatially real-analytic} solution to the Navier--Stokes equation. This is a regularization result in the sense that we obtain an analytic solution from an initial condition that is far less regular as observed in  many deterministic dissipative systems \cite{biswas, cao2000gevrey, ferrari1998gevrey, foias1989gevrey, paicu2011analyticity}. However, a caveat is that we need more regularity for the forced data of stochastic transport type than the anticipated regularity of the solution to be constructed. As such, although the solution becomes regularized with respect to its initial condition, {  the trade-off is that we need more regular transport noise coefficients, in particular we assume they are constant vectors in the time variable.
} However, the assumptions are optimal for noise with linear growth and Lipschitz coefficients (like additive and multiplicative noise) where we only require the same regularity for the noise coefficient as that of the anticipated solution. To our knowledge, this regularity result is a first for the stochastic Navier--Stokes equation. 
\\
Our second main result involves the analysis of the rate of decay of the finite-dimensional solution used in the approximation of the constructed solution. Here, we assume that the constructed solution is global in time. We
measure the decay rate in the space of enstrophy functionals and show that the rate of decay is exponential as a function of its Fourier modes. We refer to Theorem \ref{thm:decay} for the exact statement. As a consequence, in particular, we can infer that irrespective of the addition of noise, each Fourier coefficient of our solution decays { faster than exponentially} (see the first estimate in \eqref{eq:betterThanExp}), uniformly in time, uniform with  respect to the initial enstrophy and { uniformly in} the noise coefficients, with the same rate as we will expect of the corresponding deterministic system \cite{doering1995exponential, foias2001navier}. Also, see \cite{duan} for a related result for the generalized Ginzburg-Landau equation. Subsequently, the difference between the continuum and discrete velocities in the spatial $H^1$-norm decays exactly exponentially fast to zero.
\subsection{Plan}
In order to improve readability, we begin with some preliminaries covered in Section \ref{sec:prelim}, where we introduce the various notations and functional set-up needed to present and { prove our main results.} We then state the precise equation we wish to study in Section \ref{sec:MainEquation} and give the assumptions under which the equation is analysed. We also make precise in this section the various concepts of a solution and finally, state our main results. In order to avoid long computations in our proofs, we present in  Section \ref{sec:preparation} some key results that will be used in the proof of our main results. Next, we devote the entirety of Section \ref{sec:GevReg} to the proof of our first main result concerning the construction of a unique  spatially real-analytic solution of the  stochastic Navier--Stokes equation. 
Here, the main tool in our construction is the application of the extension of the Cauchy method \cite[Lemma 5.1]{glatt2009strong} to the Gevrey classes taking into account the additional transport noise and its Stratonovich-to-It\^o corrector. Finally, we use Section \ref{sec:decay} to study the asymptotic behaviour of the finite-dimensional Galerkin approximations which is the subject of our second main result, Theorem \ref{thm:decay}.

\section{Preliminaries}
\label{sec:prelim}
\subsection{Notation}
We consider a spacetime cylinder consisting of  spatial points $ \mathbf{x}=(x_1,\ldots, x_d) $ on the  torus $\mathbb{T}^d=(\mathbb{R}/2\pi\mathbb{Z})^d$, $d=2,3$ with periodic boundary condition  and a time variable  $t\in [0,T]$ where $T>0$ is fixed and arbitrary. For functions $F$ and $G$, we write $F \lesssim G$  if there exists  a generic constant $c>0$  such that $F \leq c\,G$.
We also write $F \lesssim_p G$ if the  constant  $c(p)>0$ depends on a variable $p$. If $F \lesssim G$ and $G\lesssim F$ both hold (respectively,  $F \lesssim_p G$ and $G\lesssim_p F$), we use the notation $F\sim G$ (respectively, $F\sim_p G$).
The symbol $\vert \cdot \vert$ may be used in four different context. For a scalar function $f\in \mathbb{R}$, $\vert f\vert$ denotes the absolute value of $f$. For a vector $\bff\in \mathbb{R}^d$, $\vert \bff \vert$ denotes the Euclidean norm of $\bff$. For a square matrix $\mathbb{F}\in \mathbb{R}^{d\times d}$, $\vert \mathbb{F} \vert$ shall denote the Frobenius norm $\sqrt{\mathrm{trace}(\mathbb{F}^T\mathbb{F})}$. Lastly, { if $S\subseteq  \mathbb{R}^d$ is  a measurable subset}, then $\vert S \vert$ is the $d$-dimensional Lebesgue measure of $S$.
\\ \\
Throughout this paper, we fix a stochastic basis $(\Omega, \mathcal{F}, (\mathcal{F}_t)_{t\geq 0}, \mathbb{P})$  with the complete right-continuous filtration $(\mathcal{F}_t)_{t\geq 0}$. In addition, $\mathbb{E}(\cdot)=\int_\Omega (\cdot) \dd \mathbb{P}$ is the expectation of the argument denoted by the dot. Let $(W_t^k)_{k\geq1}$ be a family of one-dimensional $(\mathcal{F}_t)$-adapted  Wiener/Brownian processes.
 A \textit{stopping time} $\tau$ is a random variable $\tau :\Omega \rightarrow [0,T]$ such that $\{\omega \,:\, \tau(\omega) \leq t\} \in \mathcal{F}_t$ for each $t\in[0,T]$. 
We denote by $L^p_{\mathcal{F}_t}(\Omega;H)$, where $p\in[1,\infty]$, the space of equivalence class of random variables $f:\Omega \rightarrow H$ that are $\mathcal{F}_t$-measurable and for which { $\Vert f \Vert_H$ has} finite $p$-moments.

\subsection{The functional setting}
Let $C^\infty_{\divx}(\mathbb{T}^d)$ denote the space of all divergence-free smooth periodic functions. For $r\geq0$, let the Hilbert space $(H^r(\mathbb{T}^d), \Vert \cdot\Vert_{H^r})$ represent the $L^2$-homogeneous Sobolev space of mean-free functions  defined as
\begin{equation}
\begin{aligned}
\label{hsSpacehomoXX}
 H^r(\mathbb{T}^d)=\Bigg\{
&\bff(\bx)= \sum_{\bk\in  \mathbb{Z}^d}\hat{\bff}_{\bk} e^{i\bk\cdot\bx}
\in
L^2(\mathbb{T}^d)
\, \bigg\vert\,
\hat{\bff}_{\bk}
=\langle\bff(\bx), e^{i\bk \cdot\bx}\rangle_{L^2} 
\\&=\int_{\mathbb{T}^d} e^{-i\bk \cdot\bx}\bff(\bx) \dx \in \mathbb{C}^d,\,
  \overline{\hat{\bff}}_{\bk}= \hat{\bff}_{-\bk}, \quad \hat{\bff}_{\bm{0}}=0, \quad 
\\&
\Vert \bff \Vert_{ H^r} =\bigg( \sum_{\bk\in  \mathbb{Z}^d}  \vert \bk\vert^{2r} \vert\hat{\bff}_{\bk}\vert^2 \bigg)^\frac{1}{2}<\infty
\Bigg\}.
\end{aligned}
\end{equation}
Here, the $\hat{\bff}_{\bk}$s are   the Fourier coefficients of the function $\bff\in L^2(\mathbb{T}^d)$. 
\begin{remark}
Note the $\hat{\bff}_{\bm{0}}=0$ in  \eqref{hsSpacehomoXX} automatically renders elements in these spaces mean-free, i.e., $\int_{\mathbb{T}^d}\bff(\bx)\dx=0$ for any $\bff\in  H^r(\mathbb{T}^d)$. 
Also, note that the underlining $L^2$-inner product is given by  $\langle \bff,\bg\rangle=\frac{1}{(2\pi)^d}\int_{\mathbb{T}^d}\bff(x)\cdot\overline{\bg}(x)\dx$.
\end{remark}
\noindent With \eqref{hsSpacehomoXX} in hand, we now define  $(\mathbb{H}^r(\mathbb{T}^d), \Vert \cdot\Vert_{H^r})$ as the subclass of $(H^r(\mathbb{T}^d), \Vert \cdot\Vert_{H^r})$ satisfying
\begin{equation}
\begin{aligned}
\label{hsSpacehomoYY}
 \mathbb{H}^r(\mathbb{T}^d)=\bigg\{
\bff(\bx)&= \sum_{\bk\in  \mathbb{Z}^d}\hat{\bff}_{\bk} e^{i\bk\cdot\bx}
\in
H^r(\mathbb{T}^d)
\, \big\vert\,
 \hat{\bff}_\bk\cdot \bk=0
\bigg\}.
\end{aligned}
\end{equation}
\begin{remark}
The condition $\hat{\bff}_\bk\cdot \bk=0$ corresponds to the divergence-free condition $\divx \bff=0$ in frequency space.
\end{remark}
\noindent { The space $ {H}^2(\mathbb{T}^d)$ is the domain of  the   positive, self-adjoint  Stokes operator
\begin{align*}
A= P(-\Delta)
, \qquad A:D(A)\rightarrow   \mathbb{H}^0(\mathbb{T}^d)=:\mathbb{L}^2(\mathbb{T}^d);
\end{align*}
the domain $D(A)$ is compactly embedded in ${L}^2(\mathbb{T}^d)$ and where $ P = I- \nabla \Delta^{-1}\mathrm{div}$ is the Leray projector. 
It is well-known that $P$ commutes with the derivative operators and it can be restricted to a bounded linear operator from  $H^r$ to $\mathbb{H}^r$,} see \cite{ lions1996mathematical, robinson, rockner2012stochastic}. Also, $A$ has a compact inverse $A^{-1}$ and
possesses a nondecreasing sequence  $\{\lambda_j\}_{j\in \mathbb{N}}$ of strictly positive eigenvalues approaching infinity as $j\rightarrow\infty$ with associated orthonormal basis $\{\mathbf{e}_j\}_{j\in \mathbb{N}}$  in $\mathbb{L}^2(\mathbb{T}^d)$.
By using the properties $\overline{\hat{\bff}}_{\bk}= \hat{\bff}_{-\bk}$ and $ \hat{\bff}_\bk\cdot \bk=0$, explicitly, we obtain for each $\bk\in  \mathbb{Z}^d$, 
\begin{align*}
\lambda_\bk=\vert \bk\vert^2, \qquad \mathbf{e}_\bk=
\mathbf{a}_\bk e^{i\bk\cdot\bx}+
\overline{\mathbf{a}}_\bk e^{-i\bk\cdot\bx}
\end{align*}
where for each $\bk$, $\mathbf{a}_\bk$ is a complex vector  in $\mathbb{C}^d$ with the property that $\mathbf{a}_\bk \cdot \bk=0$ and $\overline{\mathbf{a}}_\bk= \mathbf{a}_{-\bk}$.
{ The pair of sequences} $\{(\lambda_j,\mathbf{e}_j)\}_{j\in \mathbb{N}}$ then corresponds to the rearrangement of $\{(\lambda_\bk,\mathbf{e}_\bk)\}_{\bk\in  \mathbb{Z}^d}$ in nondecreasing order of $\lambda_\bk$.
\\
Next, by using the definition of the Stokes operator $A$, for any $r\in \mathbb{R}$, we can define its fractional power $ A^r$ as the mapping
\begin{align*}
\bff(\bx)= \sum_{\bk\in  \mathbb{Z}^d}\hat{\bff}_{\bk} e^{i\bk\cdot\bx}
\qquad \mapsto \qquad
A^r\bff
&=
\sum_{\bk\in  \mathbb{Z}^d}  \vert \bk\vert^{2r}  \hat{\bff}_{\bk} e^{i\bk\cdot\bx}
\\&= \sum_{\bk\in  \mathbb{Z}^d}  \lambda_{\bk}^r  \langle\bff(\bx), e^{i\bk \cdot\bx}\rangle_{L^2} e^{i\bk\cdot\bx}
\end{align*}
with $A^0=I$ so that
\begin{align*} 
\Vert A^r\bff\Vert_{L^2}^2= \sum_{\bk\in  \mathbb{Z}^d}  \vert \bk\vert^{4r} \vert \hat{\bff}_{\bk} \vert^2
= \sum_{\bk\in  \mathbb{Z}^d}  \lambda_{\bk}^{2r}\vert  \hat{\bff}_{\bk} \vert^2
=
\Vert \bff \Vert_{ H^{2r}}^2.
\end{align*}
We let $P^N:L^2(\mathbb{T}^d) \rightarrow H_N$ be the $L^2$-orthogonal projection onto 
 $H_N=\text{span}\{e^{i\bk\cdot \bx} \, \text{ with }\,   \vert\bk \vert \leq N\}$. Note that  by using the definition of $P^N$, we directly obtain these  inequalities for trading Fourier modes for regularity
 \begin{equation}
\begin{aligned}
\label{poinare}
&\Vert P^N \bff\Vert_{ H^{s}}^2 \leq N^{2(s-r)}\Vert P^N \bff \Vert_{ H^{r}}^2,  \qquad
\\&\Vert (I-P^N) \bff\Vert_{ H^{r}}^2 \leq N^{2(r-s)} \Vert (I-P^N) \bff \Vert_{ H^{s}}^2
\end{aligned}
 \end{equation}
for $0\leq r\leq s$ and for all $N\in \mathbb{N}$.
Also, the continuity property
\begin{align}
\label{continuityProperty}
\Vert P^N \bff\Vert_{ H^{r}}^2 \lesssim \Vert\bff \Vert_{ H^{r}}^2,  \qquad\qquad
\Vert (I-P^N) \bff\Vert_{ H^{r}}^2 \lesssim \Vert \bff \Vert_{ H^{r}}^2
\end{align}
holds uniformly in $N\in \mathbb{N}$ for all $r\in \mathbb{R}$.
\\
{  With \eqref{hsSpacehomoYY} in hand, we can recall the so-called Gevrey class spaces. For $s>0$ and $r\geq 0$, we say that $\bff $ is of Gevrey class $s$ ( i.e. $\bff\in G^s(\mathbb{T}^d)$) if and only if { there exists $\varphi> 0$} such that
\begin{align}
\bff\in
D(e^{\varphi  A^{1/2s}} : \mathbb{H}^r(\mathbb{T}^d)) \equiv \big\{ \bff(\bx)\in \mathbb{H}^r(\mathbb{T}^d) \, : \, \Vert e^{\varphi  A^{1/2s}} \bff(\bx)  \Vert_{H^r}<\infty \big \}.
\end{align}
see \cite{levermore1997analyticity}. 
In the Gevrey spaces, we refer to $r$ as the Sobolev corrector and $\varphi$ is the radius of  analyticity or width of  analyticity. To account for variations in the radius of  analyticity, we consider
the parameter $\varphi$ as a function $\varphi=\varphi(t)$ of time, see \cite[Page 71]{foias2001navier}.
}
\begin{remark}
When we don't need to use divergence-free  vector fields, we can define the Gevrey space in terms of \eqref{hsSpacehomoXX} rather than \eqref{hsSpacehomoYY} and still retain the same properties above.
\end{remark}

\section{The main results}
\label{sec:MainEquation}
{ 
\noindent With our preparation in hand, we can now present the system under study. Here, we are interested in a velocity field $\bu:\Omega \times [0,T] \times \mathbb{T}^d\rightarrow \mathbb{R}^d$ and a pressure  $p:\Omega \times [0,T] \times \mathbb{T}^d\rightarrow \mathbb{R}$
that satisfy 
\begin{equation}
\begin{aligned}
\label{eq:mainLU00}
    &\dd \bu  +[  (\bu\cdot\nabla) \bu - \nu  \Delta \bu   ]\dt+\dd \nabla p = \sum_{k\geq1} \big[\bg_k(\bu)-(\bm{\xi}_k \cdot\nabla )\bu \circ \big] \dd W_t^k,
    \\
    &\divx \bu=0,\\
    &\bu(\bx,0)= \bu_0(\bx).
\end{aligned}
\end{equation}
Here, $\nu>0$ is the viscosity coefficient and the symbol $\circ$ means that the stochastic integral is understood in the Stratonovich sense. Details on the body forces $(\bg_k)_{k\geq1}$ and the coefficients of the noise advection $(\bm{\xi}_k)_{k\geq1}$ will be given in the next section.
\\
The system is a generalisation of the deterministic Navier--Stokes equation that takes into account several random phenomena in the fluids. In particular, this may cover additive noise, multiplicative noise and transport noise of Stratonovich type, and a first rigorous analysis can be found in \cite{mikulevicius2004stochastic}.
\\
When $\bm{\xi}_k\equiv0$, the resulting stochastic partial differential equation have been studied intensively by many authors including \cite{bensoussan, brzeniak, capinski, flandoli1995martingale}.
\\
On the other hand, when $\bg_k(\bu)\equiv0$, the system \eqref{eq:mainLU00} becomes a so-called SALT (Stochastic Advection by Lie Transport) model introduced by Holm \cite{holm1} but adjusted for viscous effects as studied in \cite{flandoli2021high} in vorticity form. The (inviscid) SALT models are derived from a stochastic variational principle and their solutions follow the flow of a stochastic vector field. The viscous SALT variant above are, however, closely related to the LU (Location Uncertainty) models introduced by M\'emin \cite{memin} that relies on a decomposition of the velocity fields into a random non-differentiable part and a differentiable deterministic component. In the specific case of the Stochastic Navier--Stokes under LU,  one  considers a random vector fields $\bm{\sigma}:\Omega \times [0,T] \times \mathbb{T}^d\rightarrow \mathbb{R}^d$ and define an elliptic  \textit{variance tensor} $\mathbb{A}=(A_{ij})_{i,j=1}^d\in \mathbb{R}^{d\times d}$ as $\mathbb{A}(\bx,t)\delta(t-s)\dt= \mathbb{E}[(\bm{\sigma}(\bx,t)\dd \mathbf{W}_t)(\bm{\sigma}(\bx,s)\dd \mathbf{W}_s)^T] $ where $\mathbf{W}_t=(W_t^k)_{k\geq1}$. The system of equation is then given by 
\begin{equation}
\begin{aligned}
\label{eq:mainLU}
    &\partial_t  \bu  + (\bu\cdot\nabla) \bu
    -
    \frac{1}{2}  (\divx\mathbb{A}\cdot\nabla)\bu    
     - \frac{1}{2}\divx(\mathbb{A}\nabla \bu)  =\nu \Delta \bu - \nabla p,
  \\
    &\nabla \dd \hat{p}_t=  \nu \Delta (\bm{\sigma} \dd \mathbf{W}_t)  -(\bm{\sigma} \cdot \nabla) \bu 
    \dd \mathbf{W}_t,
    \\
   & \divx\bu=0,
    \qquad \divx(\bm{\sigma} \dd \mathbf{W}_t)=0, \qquad \divx\divx\mathbb{A}=0,\\
    &\bu(\bx,0)= \bu_0(\bx)
\end{aligned}
\end{equation}
where $p(\bx,t)$ denotes the large-scale pressure contribution and $\hat{p}_t$ is a zero-mean turbulent pressure related to the small-scale velocity component.
\\
From the analytic point of view, \eqref{eq:mainLU} is strongly related to \eqref{eq:mainLU00} (with $\bg_k(\bu)\equiv0$) in the sense that both system preserves the deterministic energy since the noise terms and the variance tensor terms cancels out when deriving an identity for $\Vert\bu\Vert_{L^2}^2$, assuming that $\bu$ is smooth enough. 
\\
Returning to \eqref{eq:mainLU00}, it is convenient to eliminate the pressure in \eqref{eq:mainLU00} and only ask to find the velocity field. This is done by applying Leray's projection $ P = I- \nabla \Delta^{-1}\mathrm{div}$ to the first equation in \eqref{eq:mainLU00}. Doing so and converting the Stratonovich integral to an It\^o integral results in
\begin{equation}
\begin{aligned}
\label{eq:mainLU}
    &\dd \bu  +[ P ((\bu\cdot\nabla) \bu) - \nu  \Delta \bu   ]\dt =
    \frac{1}{2}\sum_{k\geq1} P((\bm{\xi}_k \cdot\nabla )P(\bm{\xi}_k \cdot\nabla )\bu)  \dt
    \\&\qquad\qquad\qquad\qquad\qquad\qquad
    + \sum_{k\geq1}P\big[\bg_k(\bu)-((\bm{\xi}_k \cdot\nabla )\bu)  \big] \dd W_t^k,
 \\&
 \divx \bu=0,
 \\&\bu(\bx,0)= \bu_0(\bx).
\end{aligned}
\end{equation}
Note that by using the incompressibility condition $\divx \bu=0$, one can recover the pressure from \eqref{eq:mainLU00}$_1$ by solving
\begin{align*}
\dd p =(- \Delta)^{-1}\divx((\bu\cdot\nabla) \bu)\dt- \sum_{k\geq1} (- \Delta)^{-1}\divx\big[\bg_k(\bu)-(\bm{\xi}_k \cdot\nabla )\bu \circ \big] \dd W_t^k.
\end{align*}
Also note that if we consider the orthogonal complement $Q$ of $P$, then for incompressible vector fields $\bu$ and $\bm{\xi}_k$, we have that
\begin{align*}
\big\langle P((\bm{\xi}_k \cdot\nabla )P(\bm{\xi}_k \cdot\nabla )\bu) \, ,\, \bu\rangle
&=
\langle (\bm{\xi}_k \cdot\nabla )P(\bm{\xi}_k \cdot\nabla )\bu \, ,\, \bu\rangle
\\
&=-
\langle P(\bm{\xi}_k \cdot\nabla )\bu \, ,\, (\bm{\xi}_k \cdot\nabla )\bu\rangle
\\
&=-
\langle P(\bm{\xi}_k \cdot\nabla )\bu \, ,\, P(\bm{\xi}_k \cdot\nabla )\bu\rangle
\\
&\qquad- 
\langle P(\bm{\xi}_k \cdot\nabla )\bu \, ,\, Q(\bm{\xi}_k \cdot\nabla )\bu\rangle
\\
&=-
\Vert P(\bm{\xi}_k \cdot\nabla )\bu \Vert_{L^2}^2.
\end{align*}
This gives a useful cancellation property between the Stratonovich to It\^o corrector and the quadratic variation term that appears when It\^o's formula is used to obtain an energy estimate from \eqref{eq:mainLU}.
}


\subsection{Assumptions}
\label{sec:assumptions}
\noindent 
{ Let $\sigma_1\geq 0$, $s>0$ and $r\geq0$ be parameters that will be chosen later. 
For the family of mappings $(\bg_k)_{k\geq1}: [0,T] \times\mathbb{T}^d\times  \mathbb{R}^d  \rightarrow   \mathbb{R}^d $}, the following estimates
\begin{align}
&\sum_{k\geq1}\Vert e^{\sigma_1 A^{1/2s}}  \bg_k(t,\bx, \bv) \Vert_{H^r} 
\lesssim 1+\Vert 
 e^{\sigma_1 A^{1/2s}} \bv  \Vert_{H^r},
&\qquad (Growth) \label{noiseGrowth}
\\&
\sum_{k\geq1} \Vert e^{\sigma_1 A^{1/2s}} [\bg_k(t,\bx, \bv)- \bg_k(t,\bx, \mathbf{w}) ]\Vert_{ H^r } 
\nonumber
\\&\qquad\qquad\qquad\qquad\qquad\qquad\lesssim \Vert 
e^{\sigma_1 A^{1/2s}} [\bv- \mathbf{w}] \Vert_{H^r}
&\qquad (Lipschitz) \label{noiseLipschitz}
\end{align}
holds a.s.  for any $\bv, \mathbf{w} \in  D(e^{\sigma_1  A^{1/2s} } : \mathbb{H}^r(\mathbb{T}^d))$, where the constants in \eqref{noiseGrowth}--\eqref{noiseLipschitz} are independent of $t\in[0,T]$.
Additionally, we assume that
\begin{align}
&\int_{\mathbb{T}^d}\bg_k(t,\bx,\bv) \dx=0\text{ for each } k\geq 1,
&\qquad (Mean-free)
\label{noiseMeanfree}
\end{align}

\begin{remark}
The assumptions imposed on the noise coefficients $(\bg_k)_{k\geq 1}$ { cover but are not limited to}  the  cases of additive and linear multiplicative noise.
\end{remark}
{ 
\noindent With  regards to the transport noise, we  require that the sum of the coefficients $(\bm{\xi}_k)_{k\geq 1}$ is spatially analytic and each vector is solenoidal. For example, $(\bm{\xi}_k)_{k\geq1}$   encodes data from satellites, drifters, and floats in large-scale ocean dynamics \cite{holm}.
More precisely, we assume that for any  $r\geq 0, s>0$ and $\sigma_2\geq0$,
\begin{align}
\label{xikbounded}
\sum_{k\geq1}\Vert e^{\sigma_2 A^{1/2s}}  \bm{\xi}_k \Vert_{H^{r}}\leq K<\infty, \qquad \divx\bm{\xi}_k=0
\end{align}
holds for a constant $K>0$.

Finally, in order to treat the multiplication noise and the transport noise separately, we impose an orthogonality constraint between them. More precisely, we assume that 
\begin{align}
\label{orthoTwoNoise}
\big\langle
A^re^{\sigma_1 A^{1/2s}}\bg_k(\bv)\,,\,
A^re^{\sigma_1 A^{1/2s}}((\bm{\xi}_k\cdot\nabla)\mathbf{w})
\big\rangle_{L^2}=0
\end{align}
holds for any $k\geq1$ and any $\bv,\mathbf{w}   \in  D(e^{\sigma_1  A^{1/2s} } : \mathbb{H}^r(\mathbb{T}^d))$.

\subsection{Concepts of solution}
We now make precise the various notion of a solution that we shall refer to throughout this work.
\begin{definition}[Local strong pathwise solution]
\label{def:locStrongPathSol}
Let $\tau$ be an a.s. strictly positive stopping time and let $\bu :[0,\tau) \rightarrow \mathbb{H}^r(\mathbb{T}^d)$ be a stochastic process for some $r\geq1$. We call the pair $(\bu, \tau)$  a \textit{local strong pathwise solution} of \eqref{eq:mainLU} if { there exists an increasing sequence} of strictly positive stopping times $(\tau_l)_{l\geq1}$ such that:
\begin{itemize}
    \item  $\tau_l <\tau$   and $\lim_{l\rightarrow \infty} \tau_l = \tau$ a.s.;
    \item $\bu$ is  $(\mathcal{F}_t)$-progressively measurable and for each $l\geq1$,
    \begin{align*}
        \bu(\cdot \wedge \tau_l) \in C([0,T];\mathbb{H}^r(\mathbb{T}^d)) \cap L^2((0,T) ;\mathbb{H}^{r+1} (\mathbb{T}^d)) \qquad
         \mathbb{P}\text{-a.s.;}
    \end{align*}
    \item For each $l\geq1$, the equation
\begin{equation}
    \begin{aligned}
    \bu(t \wedge \tau_l)
  &=\bu_0
  - \int_0^{t  \wedge \tau_l}   P((\bu , \nabla) \bu) 
  \ds  
  + \nu\int_0^{t \wedge \tau_l} \Delta\bu \ds 
  \\&+
  \frac{1}{2}
\sum_{k\geq1}  \int_0^{t \wedge \tau_l}     P((\bm{\xi}_k \cdot\nabla )P(\bm{\xi}_k \cdot\nabla )\bu) \ds
\\& +
\sum_{k\geq1}  \int_0^{t \wedge \tau_l}  P \bg_k(\bu ) \dd W_\sigma^k
-
\sum_{k\geq1}  \int_0^{t  \wedge \tau_l}  P( (\bm{\xi}_k \cdot\nabla )\bu) \dd W_\sigma^k
\end{aligned}
\end{equation}
    holds a.s. for all $t\in[0,T]$.
\end{itemize}
\end{definition}

\begin{definition}[Maximal strong pathwise solution]
\label{def:MaxStrongPathSol}
A local strong pathwise solution $(\bu,\tau)$ of \eqref{eq:mainLU} is called a \textit{maximal strong pathwise solution} of  \eqref{eq:mainLU} if for any other local strong pathwise solution $(\bu',\tau')$, we have
\begin{align} 
\label{maximal}
\tau' \leq \tau \quad\text{ a.s. and } \quad \bu=\bu' \quad \text{ a.e. on } \quad\Omega \times [0,\tau')\times \mathbb{T}^d.
\end{align}
\end{definition}
\begin{remark}
%
{ 
Note that by definition, maximal strong pathwise solutions are \textit{essentially unique} in the class of local strong pathwise solutions in the following sense. For any two maximal strong pathwise solutions $(\mathbf{u}_1,\tau_1)$ and $(\mathbf{u}_2,\tau_2)$, it follows from \eqref{maximal} that 
\begin{align*}
\tau_1=\tau_2=\tau\quad\text{ a.s. and } \quad\mathbf{u}_1=\mathbf{u}_2  \quad \text{ a.e. on } \quad\Omega\times[0,\tau)\times\mathbb{T}^d.
\end{align*}
But one could say even more. Using continuity of the trajectories of $\mathbf{u}_1$ and $\mathbf{u}_2$ - as $\mathbb{H}^r(\mathbb{T}^d)$-
valued random variables - one could replace the condition $\mathbf{u}_1=\mathbf{u}_2$ a.e. on
$\Omega\times[0,\tau)\times\mathbb{T}^d$ with the stronger statement that there exists a full measure set $\tilde{\Omega}\subseteq \Omega$
such that, for every $\omega\in \tilde{\Omega}$ it holds $\mathbf{u}_1(\omega,t)=\mathbf{u}_2(\omega,t)$ (as an identity in $\mathbb{H}^r(\mathbb{T}^d)$) for every $t<\tau(\omega)$. In particular, the two processes $\mathbf{u}_1$ and $\mathbf{u}_2$ are
indistinguishable.
}
\end{remark}
\noindent In the following, we state a result from \cite{glatt2009strong} on the existence of an essentially unique local (global when $d=2$) strong pathwise solution of \eqref{eq:mainLU} without the transport noise term. { An extension to the full system \eqref{eq:mainLU} applies} and coincides with the construction that is to be performed in Section \ref{sec:GevReg} when the radius of analyticity is $\varphi=0$.
\begin{theorem}[An essentially unique local strong pathwise solution] 
\label{thm:globStrong} 
 Let $(\bg_k)_{k\geq1}$ satisfy \eqref{noiseGrowth}--\eqref{noiseMeanfree} with  $r=1$, $s>0$ and $\sigma_1=0$ and let $(\bm{\xi}_k)_{k\geq1}$ satisfy \eqref{xikbounded} with $r>4$, $s>0$ and $\sigma_2=0$.
For $\bu_0\in L^p_{\mathcal{F}_0}\big(\Omega; \mathbb{H}^1 (\mathbb{T}^d)\big)$ with $p\in(1,\infty)$, there exists an { essentially unique local strong pathwise
solution $(\bu,\tau)$ of \eqref{eq:mainLU}
such that
\begin{align}
\label{h1Energy}
\mathbb{E}\bigg[ \sup_{t \in [0, T\wedge \tau]} \Vert   \bu \Vert_{H^1}^p
+
\int_0^{T\wedge \tau}\Vert  \bu \Vert_{H^2}^2 \Vert   \bu \Vert_{H^1}^{p-2} \dt \bigg] \leq c
\end{align}
}
holds with a constant $c=c(\bu_0, K)$ where $K$ is the upper bound appearing in \eqref{xikbounded}.
\end{theorem}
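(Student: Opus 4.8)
The plan is to construct the solution by a Galerkin scheme, close a uniform enstrophy estimate, and then pass to the limit via pathwise uniqueness, exactly along the extension of the Cauchy method of \cite[Lemma 5.1]{glatt2009strong}; the statement recorded here is the $\sigma_1=\sigma_2=0$ (equivalently $\varphi=0$) specialisation of the construction carried out in full in Section \ref{sec:GevReg}. First I would project \eqref{eq:mainLU} by $P^N$ onto $H_N$ to obtain, for each $N$, a finite-dimensional system of It\^o SDEs for $\bu^N=P^N\bu^N$ whose coefficients are locally Lipschitz by \eqref{noiseGrowth}--\eqref{noiseLipschitz} and \eqref{xikbounded}; standard SDE theory then furnishes a unique local $H_N$-valued solution, which the a priori bounds below promote to a global-in-time Galerkin solution.

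The heart of the argument is a uniform-in-$N$ enstrophy estimate. I would apply It\^o's formula to $\Vert \bu^N\Vert_{H^1}^p=(\Vert A^{1/2}\bu^N\Vert_{L^2}^2)^{p/2}$ and treat the resulting terms in turn. The viscous term produces the favourable dissipation $-\nu\int\Vert\bu^N\Vert_{H^2}^2\Vert\bu^N\Vert_{H^1}^{p-2}$ that supplies the second term in \eqref{h1Energy}; the inertial term is estimated by the usual product and interpolation inequalities and absorbed into the dissipation after introducing the stopping times $\tau^N_l=\inf\{t:\Vert\bu^N(t)\Vert_{H^1}\ge l\}$ (this localisation is unavoidable in $3$D, where no global enstrophy bound is available); and the multiplicative noise is handled by the growth assumption \eqref{noiseGrowth} for the quadratic-variation contribution together with the Burkholder--Davis--Gundy inequality for the martingale part.

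The decisive step is the transport noise. Using the commutation \eqref{strongAssumption} with $\sigma_2=0$ I would move $A^{1/2}$ past $(\bm{\xi}_k\cdot\nabla)$, and then exploit the skew-adjointness of $(\bm{\xi}_k\cdot\nabla)$ on mean-free divergence-free fields (valid since $\divx\bm{\xi}_k=0$) to rewrite the Stratonovich-to-It\^o corrector $\tfrac12\sum_k\langle A^{1/2}P((\bm{\xi}_k\cdot\nabla)(\bm{\xi}_k\cdot\nabla)\bu^N),A^{1/2}\bu^N\rangle_{L^2}$ as $-\tfrac12\sum_k\Vert A^{1/2}(\bm{\xi}_k\cdot\nabla)\bu^N\Vert_{L^2}^2$; this cancels the quadratic-variation term generated by the transport part of the stochastic integral, the cross term with $\bg_k$ having already been removed by the orthogonality hypothesis \eqref{orthoTwoNoise}. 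The residual commutator errors produced by $P^N$ and by the finite regularity of the coefficients are controlled by the strong bound \eqref{xikbounded} with $r>4$, which is precisely why more regularity is demanded of $\bm{\xi}_k$ than of the solution, and yields a constant depending only on $\bu_0$ and $K$. I expect this cancellation, together with the bookkeeping needed to keep the error terms subordinate to the dissipation uniformly in $N$, to be the main obstacle.

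With the uniform bound in hand I would pass to the limit. By pathwise uniqueness — obtained by testing the equation for the difference $\bw=\bu-\bu'$ of two solutions against $\bw$ in $L^2$, where the inertial difference is closed by Gronwall on the stopped interval, the linear transport difference cancels by the same skew-symmetry, and the noise difference is absorbed via \eqref{noiseLipschitz} — the Galerkin sequence is Cauchy in probability in $C([0,\tau_l];\mathbb{H}^1)\cap L^2(0,\tau_l;\mathbb{H}^2)$, so \cite[Lemma 5.1]{glatt2009strong} produces a genuine strong pathwise solution on each stopped interval. Finally I would set $\tau=\lim_{l\to\infty}\tau_l$ and patch the consistent local solutions into the maximal solution $(\bu,\tau)$ satisfying \eqref{maximal}, the bound \eqref{h1Energy} following from the uniform Galerkin estimate by lower semicontinuity of the norms and Fatou's lemma.
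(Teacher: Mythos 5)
Your proposal is correct and follows essentially the same route as the paper: the paper does not reprove this theorem but cites \cite{glatt2009strong} and notes that the extension to transport noise coincides with the Galerkin construction of Section \ref{sec:GevReg} specialised to $\varphi=0$, which is exactly your scheme — Galerkin projection, It\^o estimates on the enstrophy, cancellation of the transport quadratic variation against the Stratonovich-to-It\^o corrector via \eqref{strongAssumption} and skew-adjointness (the paper's Lemma \ref{cor:xiAppendix}), removal of the cross term by \eqref{orthoTwoNoise}, and passage to the limit by the Cauchy method of \cite[Lemma 5.1]{glatt2009strong} (the paper's Lemma \ref{lem:comparison}).
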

\noindent We can now make precise, what we mean by a solution of \eqref{eq:mainLU} being of Gevrey class $s$.
\begin{definition}[Gevrey class $s$ solution]
\label{def:GevreySol}
Let  $(\bg_k)_{k\geq1}$ and  $(\bm{\xi}_k)_{k\geq1}$ satisfy the assumptions in Section \ref{sec:assumptions} for some $r\geq0$, $\sigma_1,\sigma_2>0$ and $s>0$. Let $\tau$ be an a.s. strictly positive stopping time and let $\bu :[0,\tau) \rightarrow \mathbb{H}^r(\mathbb{T}^d)$ be a stochastic process with $r\geq1$. We call the pair $(\bu, \tau)$  a \textit{Gevrey class $s$ solution} of \eqref{eq:mainLU} if:
\begin{enumerate}
\item $(\bu,\tau)$ is  a { local} strong pathwise solution of \eqref{eq:mainLU}; 
    \item { $\bu(\cdot \wedge \tau )\in  C((0,\tau ); D(e^{\varphi(\cdot)  A^{1/2s}} : \mathbb{H}^r(\mathbb{T}^d)))$ a.s.}
\end{enumerate}
\end{definition}

\begin{remark}
Henceforth, we will set $r=s=1$ and $\varphi(t)=t$  and concentrate on the Gevrey class $1$ solution. Please see the preliminary section of \cite{levermore1997analyticity} for further discussions on the relationship between this Gevrey class and the space of analytic functions.
\end{remark}
\begin{remark}
\label{rem:weakContinuity}
Here, $C((0,\tau ); D(e^{t A^{1/2s}} : \mathbb{H}^r(\mathbb{T}^d)))$ denotes the set of functions $\bu : (0, \tau ) \rightarrow D(e^{t  A^{1/2s}} : \mathbb{H}^r(\mathbb{T}^d))$ being continuous with respect
to the norm topology, i.e.,
{ 
\begin{align*}
   e^{t_k  A^{1/2s}} \bu(t_k) \rightarrow e^{t  A^{1/2s}}\bu(t) \quad\text{in}\quad   \mathbb{H}^r(\mathbb{T}^d)
\end{align*}
}
for any sequence $(t_k)_{k\in \mathbb{N}} \subset (0,\tau )$ with $t_k\rightarrow t$.
\end{remark}
\subsection{Main results}
Our first  main theorem is the following.
\begin{theorem}[An essentially unique  Gevrey class $1$ solution]
\label{thm:main}Let $(\bg_k)_{k\geq1}$ satisfy \eqref{noiseGrowth}--\eqref{noiseMeanfree} with  $r=s=1$  and $\sigma_1=t $ and let $(\bm{\xi}_k)_{k\geq1}$ satisfy \eqref{xikbounded} with $r>4$, $s=1$ and $\sigma_2=T$. Now suppose that there exists a deterministic $K_0>0$ such that 
\begin{align}
\label{initialCondBounded}
\Vert \bu_0 \Vert_{H^1}^2\leq K_0 \qquad \text{a.s.}
\end{align}
Then there exists an essentially unique   Gevrey class $1$ (real-analytic) solution $(\bu,\tau)$ of \eqref{eq:mainLU}.
\end{theorem}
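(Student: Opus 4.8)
The plan is to produce the Gevrey-$1$ solution not by an independent construction but by upgrading the regularity of the maximal strong pathwise solution $(\bu,\tau)$ furnished by Theorem \ref{thm:globStrong} (which applies, since \eqref{initialCondBounded} gives $\bu_0\in L^p_{\mathcal F_0}(\Omega;\mathbb H^1)$ for every $p$). Because a maximal strong pathwise solution is unique, it suffices to establish the a priori bound
\begin{equation*}
\mathbb E\Big[\sup_{t\in[0,T\wedge\tau]}\|e^{tA^{1/2}}\bu(t)\|_{H^1}^2+\nu\int_0^{T\wedge\tau}\|e^{\sigma A^{1/2}}\bu(\sigma)\|_{H^2}^2\ds\Big]<\infty ,
\end{equation*}
from which membership in $C((0,T];G^{1,1}_{t})$, and hence the defining property of Definition \ref{def:GevreySol}, follows. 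Since $e^{tA^{1/2}}$ is unbounded, this cannot be read off $\bu$ directly; instead I would run the computation on the Galerkin projections $\bu^N=P^N\bu$ (for which $e^{tA^{1/2}}\bu^N$ is a finite trigonometric sum), obtain bounds uniform in $N$, pass to the limit, identify the limit with $\bu$ by the uniqueness in Theorem \ref{thm:globStrong}, and transfer the bound by lower semicontinuity of the Gevrey norm. This localization and passage to the limit is precisely the extension of the Cauchy method of \cite[Lemma 5.1]{glatt2009strong} to the Gevrey scale.

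The core is Itô's formula for $\Phi^N(t):=\|e^{tA^{1/2}}\bu^N(t)\|_{H^1}^2=\|A^{1/2}e^{tA^{1/2}}\bu^N(t)\|_{L^2}^2$. Differentiating the explicit time-dependence of the weight produces the dangerous term $2\|e^{tA^{1/2}}\bu^N\|_{H^{3/2}}^2$, which I would play against the viscous dissipation $-2\nu\|e^{tA^{1/2}}\bu^N\|_{H^2}^2$ through the interpolation $\|\bv\|_{H^{3/2}}^2\le\|\bv\|_{H^1}\|\bv\|_{H^2}$ and Young's inequality, leaving $-\nu\|e^{tA^{1/2}}\bu^N\|_{H^2}^2+\nu^{-1}\Phi^N$. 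The transport terms are where the structure is used decisively: after commuting the weight through $(\bm{\xi}_k\cdot\nabla)$ by \eqref{strongAssumption}, the drift from the Stratonovich-to-It\^o corrector $\tfrac12\sum_k P((\bm{\xi}_k\cdot\nabla)(\bm{\xi}_k\cdot\nabla)\bu^N)$ and the It\^o correction of the transport martingale combine, through the skew-adjointness of $(\bm{\xi}_k\cdot\nabla)$ (valid since $\divx\bm{\xi}_k=0$), into the non-positive quantity $\sum_k\big(\|P(\bm{\xi}_k\cdot\nabla)A^{1/2}e^{tA^{1/2}}\bu^N\|_{L^2}^2-\|(\bm{\xi}_k\cdot\nabla)A^{1/2}e^{tA^{1/2}}\bu^N\|_{L^2}^2\big)\le0$, which is simply discarded. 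The multiplicative-noise It\^o correction $\sum_k\|A^{1/2}e^{tA^{1/2}}\bg_k(\bu^N)\|_{L^2}^2$ is bounded by $c(1+\Phi^N)$ via the growth bound \eqref{noiseGrowth} with $\sigma_1=t$, $r=1$; the cross It\^o correction between the two noises vanishes by the orthogonality \eqref{orthoTwoNoise}; and the stochastic integrals, whose quadratic variations are controlled by the same quantities (using \eqref{xikbounded} with $r>4$), are handled by the Burkholder--Davis--Gundy inequality after taking suprema and expectations.

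The remaining and decisive term is the nonlinearity $-2\langle Ae^{tA^{1/2}}P((\bu^N\cdot\nabla)\bu^N),e^{tA^{1/2}}\bu^N\rangle$, for which I would invoke the Gevrey product estimate in the spirit of Foias--Temam \cite{foias1989gevrey}: in two dimensions it is bounded (up to a logarithmic factor) by $c\|e^{tA^{1/2}}\bu^N\|_{H^1}\|e^{tA^{1/2}}\bu^N\|_{H^2}$ and in three dimensions by $c\|e^{tA^{1/2}}\bu^N\|_{H^1}^{1/2}\|e^{tA^{1/2}}\bu^N\|_{H^2}^{3/2}$, both of which Young's inequality absorbs into $\tfrac{\nu}{2}\|e^{tA^{1/2}}\bu^N\|_{H^2}^2$ at the cost of a term polynomial in $\Phi^N$. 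Collecting everything yields $d\Phi^N\le\big(-\tfrac{\nu}{2}\|e^{tA^{1/2}}\bu^N\|_{H^2}^2+C\,p(\Phi^N)+C\big)\dt+d\mathcal M^N$ with $p$ at most quadratic and $\mathcal M^N$ a martingale; localizing by the stopping times at which $\Phi^N$ or the dissipation integral first exceeds a level $R$, a stochastic Gronwall argument closes the bound uniformly in $N$, and the enstrophy estimate \eqref{h1Energy} is then used to propagate the control of the polynomial growth up to $T\wedge\tau$.

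I expect the main obstacle to be exactly this nonlinear Gevrey estimate, especially in three dimensions, where it closes only quadratically, so that controlling $\Phi$ up to the maximal time $\tau$ requires coupling the stopping-time localization with \eqref{h1Energy} rather than a naive Gronwall. The two secondary technical hurdles are the rigorous justification of the It\^o formula for the unbounded weight $e^{tA^{1/2}}$ (resolved by the Galerkin truncation and monotone passage to the limit) and the transport-noise cancellation, which rests entirely on the commutation identity \eqref{strongAssumption} holding for the Gevrey-$1$ weight $A^{1/2}e^{tA^{1/2}}$.
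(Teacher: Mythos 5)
Your overall strategy --- Galerkin approximation, It\^o's formula for the Gevrey energy $\Vert e^{tA^{1/2}}\bu^N\Vert_{H^1}^2$, absorption of the $H^{3/2}$-term produced by the time-dependent weight into the viscous dissipation, the Foias--Temam estimate for the nonlinearity, the cancellation between the Stratonovich corrector and the transport It\^o correction via \eqref{strongAssumption} and \eqref{orthoTwoNoise}, and the Cauchy method of \cite[Lemma 5.1]{glatt2009strong} --- is exactly the paper's (Lemma \ref{lem:criteria} and Proposition \ref{prop:local}). However, two steps as you state them do not go through. First, you set $\bu^N=P^N\bu$, the projection of the true solution, whereas the estimate only closes for the solutions $\bu^N$ of the truncated system \eqref{eq:galerkin}. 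With $\bu^N=P^N\bu$ the drift in It\^o's formula contains $\langle A^{1/2}e^{tA^{1/2}}P^NP((\bu\cdot\nabla)\bu),A^{1/2}e^{tA^{1/2}}P^N\bu\rangle$ with the \emph{full} $\bu$ inside the nonlinearity; distributing the weight over the convolution via $e^{t\vert\bj+\bk\vert}\le e^{t\vert\bj\vert}e^{t\vert\bk\vert}$ then requires Gevrey control of the high modes of $\bu$, which is precisely what is to be proved, and the crude bound $\Vert e^{tA^{1/2}}P^Nf\Vert\le e^{tN}\Vert f\Vert$ is not uniform in $N$. (Your later formulas, written in terms of $P((\bu^N\cdot\nabla)\bu^N)$ together with an ``identification of the limit'', suggest you really mean the Galerkin solutions; but as literally defined the scheme is circular.) Relatedly, invoking the Cauchy method requires the contraction estimate for the differences $\bu^N-\bu^n$ in the Gevrey norm (the paper's \eqref{expecZeroX}), which is the bulk of Lemma \ref{lem:criteria} and is not supplied by a uniform bound plus lower semicontinuity.

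Second, the final propagation step is not justified: the nonlinear Gevrey estimate yields a differential inequality of the schematic form $\dd\Phi^N\le\big(-\tfrac{\nu}{2}\Vert e^{tA^{1/2}}\bu^N\Vert_{H^2}^2+c(\nu)(\Phi^N)^3+c\big)\dt+\dd\mathcal{M}^N$, and this cubic inequality cannot be closed up to $T\wedge\tau$ by the enstrophy bound \eqref{h1Energy}, since $\Phi^N$ is not controlled by $\Vert\bu^N\Vert_{H^1}$. The paper accordingly obtains Gevrey regularity only up to the a.s.\ positive stopping time produced by Lemma \ref{lem:comparison} (the exit time of the Gevrey energy from a ball of radius $\Vert\bu_0^N\Vert_{H^1}^2+M$), extends it to a maximal one by a contradiction argument in Section \ref{sec:GevReg}, and explicitly leaves open whether that time is infinite even in 2D, precisely because $\langle e^{tA^{1/2}}P((\bu\cdot\nabla)\bu),e^{tA^{1/2}}\bu\rangle\neq0$. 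Your claim that \eqref{h1Energy} lets you ``propagate the control of the polynomial growth up to $T\wedge\tau$'' therefore asserts more than the argument (or the paper) delivers; the theorem is proved with the stopping time coming out of the Cauchy-method lemma, not with the lifespan of the $H^1$ solution.
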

\begin{remark} 
\label{rem:thm:main} 
Note that due to Theorem \ref{thm:globStrong}, the proof of Theorem \ref{thm:main} is done once we show the second item  in Definition \ref{def:GevreySol}.
\end{remark}
\begin{remark}
It is possible to generalize the assumption \eqref{initialCondBounded}   to $\bu_0\in L^p_{\mathcal{F}_0}\big(\Omega; \mathbb{H}^1 (\mathbb{T}^d)\big)$ with $p\in(1,\infty)$  by employing a truncating argument in the spirit of \cite{glatt2009strong}. That is, for $\bu_0\in L^p_{\mathcal{F}_0}\big(\Omega; \mathbb{H}^1 (\mathbb{T}^d)\big)$, we define
\begin{align*}
\bu_0^k:= \bu_0 \bm{1}_{\{k \leq \Vert   \bu_0 \Vert_{H^1} <k+1\}}
\end{align*}
for $k\geq0$ so that the  boundedness assumption \eqref{initialCondBounded} holds for $\bu_0^k$. Subsequently, as we shall soon see, this will generate a Gevrey class $1$ solution $(\bu^k,\tau^k)$   of \eqref{eq:mainLU}. We then define the pair
\begin{align*}
\bu:= \sum_{k\geq0}\bu^k \bm{1}_{\{k \leq \Vert   \bu_0 \Vert_{H^1} <k+1\}},
\\
\tau:= \sum_{k\geq0}\tau^k \bm{1}_{\{k \leq \Vert   \bu_0 \Vert_{H^1} <k+1\}},
\end{align*}
and find that $(\bu, \tau)$ is indeed a Gevrey class $1$ solution of \eqref{eq:mainLU} with the general initial condition $\bu_0\in L^p_{\mathcal{F}_0}\big(\Omega; \mathbb{H}^1 (\mathbb{T}^d)\big)$. However, for clarity of exposition, we impose the bounded initial condition \eqref{initialCondBounded} throughout the paper.
\end{remark}
\noindent By using the fact that the Gevrey class $1$ functions are contained in the space of smooth functions, an immediate corollary of Theorem \ref{thm:main} is the following smoothing result. See \cite[Lemma 3]{levermore1997analyticity} for more details.
\begin{corollary}[Instantaneous smoothing]
\label{thm:instSmooth}
Let assumptions of Theorem \ref{thm:main} hold.
Any  Gevrey class $1$  solution  $(\bu,\tau)$ of \eqref{eq:mainLU} satisfies $\bu(\cdot\wedge \tau)\in C((0, \tau );C^\infty_{\divx}( \mathbb{T}^d))$ a.s. 
\end{corollary}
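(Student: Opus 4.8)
The plan is to exploit the single elementary fact underlying \cite[Lemma 3]{levermore1997analyticity}: since we have fixed $r=s=1$ and $\varphi(t)=t$, the operator $A^{1/2s}=A^{1/2}$ acts on the Fourier side as multiplication by $\vert\bk\vert$, so that membership in the Gevrey class with \emph{any} strictly positive radius already controls every Sobolev norm. Concretely, the Gevrey-$1$ norm reads
\begin{align*}
\Vert e^{t A^{1/2}}\bff\Vert_{H^1}^2=\sum_{\bk\in 2\pi\mathbb{Z}^d}\vert\bk\vert^2 e^{2t\vert\bk\vert}\vert\hat{\bff}_\bk\vert^2.
\end{align*}
By Remark \ref{rem:thm:main} the only thing established by Theorem \ref{thm:main} that we use is the second item of Definition \ref{def:GevreySol}, namely $\bu(\cdot\wedge\tau)\in C((0,T];G^{1,1}_{\varphi(\cdot)}(\mathbb{T}^d))$ a.s.

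First I would establish the pointwise-in-time embedding. Fix $t>0$ and $r\geq 1$. Because exponentials dominate polynomials, the quantity $C_{r,t}:=\sup_{\xi>0}\xi^{2r-2}e^{-2t\xi}$ is finite, whence $\vert\bk\vert^{2r}\leq C_{r,t}\,\vert\bk\vert^2 e^{2t\vert\bk\vert}$ for every $\bk$. Summing against $\vert\hat{\bff}_\bk\vert^2$ gives $\Vert\bff\Vert_{H^r}^2\leq C_{r,t}\Vert e^{tA^{1/2}}\bff\Vert_{H^1}^2$, so $G^{1,1}_t(\mathbb{T}^d)\hookrightarrow\mathbb{H}^r(\mathbb{T}^d)$ for every $r\geq 1$. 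Applying this to $\bff=\bu(t\wedge\tau)\in G^{1,1}_{\varphi(t)}=G^{1,1}_t$ shows $\bu(t\wedge\tau)\in\bigcap_{r\geq 1}\mathbb{H}^r(\mathbb{T}^d)$, and Sobolev embedding on $\mathbb{T}^d$ then places $\bu(t\wedge\tau)$ in $C^\infty_{\divx}(\mathbb{T}^d)$ for each fixed $t>0$.

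Next I would upgrade this to continuity in time. Fix an arbitrary compact subinterval $[a,b]\subset(0,T]$, on which $\varphi(t)=t\geq a>0$. Writing $\bv(t):=e^{tA^{1/2}}\bu(t\wedge\tau)$, the hypothesis furnishes continuity of $t\mapsto\bv(t)$ in $\mathbb{H}^1$; in particular $\{\bv(t):t\in[a,b]\}$ is compact in $\mathbb{H}^1$ and $\bu(t\wedge\tau)=e^{-tA^{1/2}}\bv(t)$. For $t,t'\in[a,b]$ I would split
\begin{align*}
\bu(t\wedge\tau)-\bu(t'\wedge\tau)=e^{-tA^{1/2}}\big(\bv(t)-\bv(t')\big)+\big(e^{-tA^{1/2}}-e^{-t'A^{1/2}}\big)\bv(t').
\end{align*}
The operator $e^{-tA^{1/2}}$ maps $\mathbb{H}^1$ into $\mathbb{H}^r$ with norm $\sup_{\xi>0}\xi^{r-1}e^{-t\xi}\leq\sup_{\xi>0}\xi^{r-1}e^{-a\xi}<\infty$ uniformly in $t\in[a,b]$, so the first term is bounded by a fixed constant times $\Vert\bv(t)-\bv(t')\Vert_{H^1}\to0$. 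For the second term, $s\mapsto e^{-sA^{1/2}}$ is strongly continuous into $\mathbb{H}^r$ on $[a,b]$, and this convergence is uniform over the compact trajectory $\{\bv(t')\}$, so the second term also vanishes as $t'\to t$. Hence $t\mapsto\bu(t\wedge\tau)$ is continuous from $[a,b]$ into $\mathbb{H}^r$ for every $r\geq 1$, and since $a>0$ was arbitrary it is continuous from $(0,T]$ into $\bigcap_{r\geq1}\mathbb{H}^r(\mathbb{T}^d)\hookrightarrow C^\infty_{\divx}(\mathbb{T}^d)$.

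The pointwise embedding is immediate; the step I expect to require the most care is the time-continuity argument, precisely because the radius of analyticity $\varphi(t)=t$ varies with time and degenerates as $t\to0^+$. One must therefore work on compact subintervals bounded away from the origin, where the radius admits a uniform positive lower bound, and control the splitting above via strong continuity of the analytic smoothing semigroup $e^{-sA^{1/2}}$ together with the precompactness of $\{\bv(t)\}$; this degeneration is also what forces the conclusion to hold only on the open interval $(0,T]$ rather than up to $t=0$.
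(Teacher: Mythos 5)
Your proof is correct and follows essentially the same route as the paper, which disposes of this corollary in one sentence by citing \cite[Lemma 3]{levermore1997analyticity}: the key point in both cases is that an exponential weight $e^{2t\vert\bk\vert}$ with $t>0$ dominates every polynomial weight $\vert\bk\vert^{2r}$, so membership in $G^{1,1}_t$ controls all $\mathbb{H}^r$-norms. Your additional argument for continuity in time via the splitting $e^{-tA^{1/2}}(\bv(t)-\bv(t'))+(e^{-tA^{1/2}}-e^{-t'A^{1/2}})\bv(t')$ on compact subintervals $[a,b]\subset(0,T]$ is a correct and welcome elaboration of a point the paper leaves implicit.
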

\noindent
Finally, by using the spatial discretization in the construction of the above Gevrey class regularity, we
give a quantitative error estimate for the difference between the original continuum solution and the solution to
the equation solved by the truncated finite-dimensional approximation. This estimate is uniform over the whole time interval $[0, T]$, uniform with respect to the initial enstrophy, and uniform with respect to the noise coefficients. In particular, uniformly with respect to the aforementioned parameters, we can infer that the Fourier coefficient $\hat{\bu}_\bk(t)$ decay exponentially with respect to its Fourier modes or wavenumber $\vert \bk\vert$. The exact statement of the result is given in Theorem \ref{thm:decay} below.
To state this theorem, we let $\bu^N$ be a unique solution to a finite-dimensional Galerkin approximation of \eqref{eq:mainLU}. Further details on this approximation will be given in Section \ref{sec:GevReg} before we give the proof of Theorem \ref{thm:decay}. 
\begin{theorem}[Rate of decay]
\label{thm:decay} 
Let the assumptions in Theorem \ref{thm:main} hold so that for $\bu_0 $ satisfying \eqref{initialCondBounded}, $(\bu,\tau)$ is the corresponding essentially unique Gevrey class $1$  solution  of \eqref{eq:mainLU} obtained as the limit of the Galerkin approximation $\bu^N$.
Assume that $\tau=\infty$ $\mathbb{P}$-a.s. 
and for each $R>0$, let 
\begin{align*}
\tau_R:=\inf \bigg\{ t\in(0,T)\,:\, \int_0^t \big(\Vert\bu^N(\sigma)\Vert_{H^2}^2 + \Vert\bu(\sigma)\Vert_{H^2}^2 \big)\ds\geq R\bigg\}
\end{align*}
be a stopping time.
Then there exist a constant $c=c(\nu,K_0,K)
e^{c(\nu,K)(T+
R
 )}$ such that for any  $t\in(0,T]$, we have that
 {  
\begin{align*}
\mathbb{E} 
\Vert( \bu - \bu^N)(t\wedge \tau_R)\Vert_{H^1}^2 \leq c\,
e^{-2\delta N}
\end{align*}
for some deterministic time $\delta>0$.}
\end{theorem}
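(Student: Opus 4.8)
The plan is to control the error in two pieces of very different character: a high-frequency \emph{projection tail} that is estimated directly from the Gevrey-class-$1$ regularity, and a finite-dimensional \emph{Galerkin defect} that is estimated by an It\^o energy argument and Gronwall. First I would write
$
\bu-\bu^N=(I-P^N)\bu+(P^N\bu-\bu^N)=:\bphi^N+\bw^N .
$
For the tail, Theorem \ref{thm:main} places $\bu(t\wedge\tau_R^N)$ in $G^{1,1}_{\varphi}(\mathbb{T}^d)$ with $\varphi=t\wedge\tau_R^N$, i.e.\ $e^{\varphi A^{1/2}}\bu(t\wedge\tau_R^N)\in\mathbb{H}^1$. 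Since $A^{1/2}$ acts on the $\bk$-mode by $\vert\bk\vert$, factoring out $e^{-2\varphi\vert\bk\vert}\le e^{-2\varphi N}$ on the modes $\vert\bk\vert>N$ gives
\[
\Vert\bphi^N(t\wedge\tau_R^N)\Vert_{H^1}^2\le e^{-2\varphi N}\,\Vert e^{\varphi A^{1/2}}\bu(t\wedge\tau_R^N)\Vert_{H^1}^2 ,\qquad \varphi=t\wedge\tau_R^N .
\]
The uniform-in-time Gevrey a priori bound produced by the construction (with constant depending on $\nu,K_0,K$) controls $\mathbb{E}\Vert e^{\varphi A^{1/2}}\bu\Vert_{H^1}^2$, so this term is already of the form $c(\nu,K_0,K)\,e^{-2\mathtt{t}N}$ with $\mathtt{t}:=t\wedge\tau_R^N$, which is a.s.\ positive because the integrand defining $\tau_R^N$ is continuous and vanishes at $s=0$.

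Next, for the defect $\bw^N\in H_N$ I would subtract the Galerkin equation for $\bu^N$ from $P^N$ applied to \eqref{eq:mainLU}, obtaining
\begin{align*}
\dd\bw^N+\nu A\bw^N\,\dt&+P^NP\big[(\bu\cdot\nabla)\bu-(\bu^N\cdot\nabla)\bu^N\big]\,\dt
=\frac12\sum_{k\geq1}P^NP\big((\bm{\xi}_k\cdot\nabla)^2(\bu-\bu^N)\big)\,\dt \\
&+\sum_{k\geq1}P^NP\big[\bg_k(\bu)-\bg_k(\bu^N)-(\bm{\xi}_k\cdot\nabla)(\bu-\bu^N)\big]\,\dd W_t^k ,
\end{align*}
up to $P^N$-commutator terms. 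Applying It\^o's formula to $\Vert\bw^N\Vert_{H^1}^2$ produces the dissipation $-2\nu\Vert\bw^N\Vert_{H^2}^2$. For the transport contributions, assumption \eqref{strongAssumption} lets $A^{1/2}$ pass through $(\bm{\xi}_k\cdot\nabla)$, and since $\divx\bm{\xi}_k=0$ an integration by parts shows that on the diagonal $\bw^N$ part the Stratonovich corrector exactly cancels the It\^o correction from the transport martingale; the orthogonality \eqref{orthoTwoNoise} deletes the cross terms with $\bg_k$, while \eqref{xikbounded} bounds the remaining commutator and tail pieces by $K$. The multiplicative-noise It\^o correction is controlled through \eqref{noiseLipschitz} by $\Vert\bw^N\Vert_{H^1}^2+\Vert\bphi^N\Vert_{H^1}^2$. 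Writing $\bu-\bu^N=\bw^N+\bphi^N$ and using bilinearity, the nonlinearity splits into a self-interaction bounded (after Young's inequality, absorbing top-order factors into $\nu\Vert\bw^N\Vert_{H^2}^2$) by $\big(1+\Vert\bu\Vert_{H^2}^2+\Vert\bu^N\Vert_{H^2}^2\big)\Vert\bw^N\Vert_{H^1}^2$, plus a pure tail-forcing part controlled by $\Vert\bphi^N\Vert_{H^1}^2$ and $\Vert\bphi^N\Vert_{H^2}^2$.

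Taking expectations annihilates the martingale and leaves a Gronwall inequality whose integrating factor is $\exp\!\big(c(\nu,K)\int_0^{t\wedge\tau_R^N}(1+\Vert\bu\Vert_{H^2}^2+\Vert\bu^N\Vert_{H^2}^2)\ds\big)$; by the very definition of $\tau_R^N$ this exponent is at most $c(\nu,K)(T+R)$, which yields the factor $e^{c(\nu,K)(T+R)}$. Since the Galerkin data are $\bu^N(0)=P^N\bu_0=P^N\bu(0)$, the initial defect vanishes, $\bw^N(0)=0$, so only the tail forcing survives. Bounding it with the Gevrey decay $\Vert\bphi^N(s)\Vert^2\lesssim e^{-2\varphi(s)N}$ and the mode-trading inequalities \eqref{poinare}, and finally using $\Vert\bu-\bu^N\Vert_{H^1}^2\le 2\Vert\bphi^N\Vert_{H^1}^2+2\Vert\bw^N\Vert_{H^1}^2$, assembles the claimed estimate.

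The main obstacle is forcing the defect $\bw^N$ to inherit the same exponential rate as the tail. The difficulty sits entirely at $s=0$: the analyticity radius $\varphi(s)=s$ degenerates there, so the tail forcing $\Vert\bphi^N(s)\Vert_{H^1}^2\lesssim e^{-2sN}$ is only $O(1)$ for $s\lesssim 1/N$, and a crude time integration of it would produce merely polynomial-in-$N$ decay. I expect the resolution to require running the energy estimate on the Gevrey-weighted defect $e^{\psi(s)A^{1/2}}\bw^N$ with $0\le\psi(s)\le\varphi(s)$, so that the tail is measured against the uniformly bounded Gevrey norm rather than through the lossy integral, together with the observation that only the dyadic shell $N<\vert\bk\vert\le 2N$ of $\bphi^N$ couples into $H_N$ through $P^N$; combining this localization with the higher-regularity tail bounds from \eqref{poinare} and the $H^2$-control baked into $\tau_R^N$ should upgrade the rate to $e^{-2\mathtt{t}N}$, at the possible cost of replacing $\mathtt{t}$ by a fixed positive fraction of $t\wedge\tau_R^N$.
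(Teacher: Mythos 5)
Your proposal follows the paper's proof essentially step for step: the same splitting $\bu-\bu^N=Q^N\bu+(P^N\bu-\bu^N)$, the same tail estimate $\Vert Q^N\bu\Vert_{H^1}^2\lesssim e^{-2\delta N}$ read off from the Gevrey-class-$1$ bound, an It\^o energy computation for the defect that is equivalent to the paper's (the paper expands $\Vert P^N\bu-\bu^N\Vert_{H^1}^2$ by polarization and applies It\^o to the three pieces rather than to the difference equation, but the resulting identity is the same), the same cancellations via \eqref{strongAssumption}, \eqref{orthoTwoNoise} and Lemma \ref{cor:xiAppendix}, the vanishing initial defect $P^N\bu_0-\bu_0^N=0$, and Gr\"onwall over $[0,t\wedge\tau_R^N]$ with the definition of $\tau_R^N$ supplying the factor $e^{c(\nu,K)(T+R)}$.

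The one point where you go beyond the paper is your closing paragraph, and you have correctly identified the delicate step. The paper does not run a Gevrey-weighted energy estimate for the defect and does not localize to a dyadic shell; its residual \eqref{residualTerm} still contains $\int_0^{t\wedge\tau_R^N}\Vert Q^N\bu(\sigma)\Vert_{H^1}^2\big(1+\Vert\bu\Vert_{H^2}^2+\Vert\bu^N\Vert_{H^2}^2\big)\ds$, whose integrand near $\sigma=0$ is only $O(1)$ exactly as you observe. The paper's resolution is simply to add the endpoint bound \eqref{qNEst} to the Gr\"onwall quantity and to phrase the conclusion with an unspecified a.s.\ positive stopping time $\mathtt{t}$, into which the small-time degeneracy of the analyticity radius is absorbed; no sharper mechanism is supplied. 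So you should not search for a hidden trick: your rendition is faithful to the paper's argument, your diagnosis that a crude time integration of the tail forcing alone would give only polynomial decay is sound, and your suggested remedy (an energy estimate for $e^{\psi(\sigma)A^{1/2}}(P^N\bu-\bu^N)$ with $0\le\psi\le\varphi$) is, if anything, more careful than what the paper actually carries out.
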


\section{Preparatory results}
\label{sec:preparation}
\noindent Before  proving our main results, we collect in this section, various essential results that will be needed in the sequel. To begin with, we give an estimate for the convective term in the Navier-Stokes equation that is useful in showing the required Gevrey class regularity. The proof can be found in \cite[Lemma 2.1]{foias1989gevrey}.
\begin{remark}
Henceforth, we drop the $L^2$ in the inner product $\langle\cdot,\cdot\rangle_{L^2}$.
\end{remark}
\begin{lemma}
\label{lem:main}
Let  $\bu,\bv,\bw \in  D(e^{\varphi  A^{1/2} } : \mathbb{H}^2(\mathbb{T}^d))$ with $\varphi\geq0$.
We have\\ $P((\bu\cdot\nabla) \bv )\in  D(e^{\varphi  A^{1/2} } : \mathbb{L}^2(\mathbb{T}^d))$ and
 the inequality 
\begin{align*}
\big\vert\big\langle  A^{1/2}  e^{\varphi  A^{1/2}  }  P((\bu\cdot\nabla) \bv) \,,\, A^{1/2}  e^{\varphi  A^{1/2}  } \bw \big\rangle\big\vert
&\lesssim
\Vert  A^{1/2}  e^{\varphi  A^{1/2}  }\bu \Vert_{L^2}^\frac{1}{2}
\Vert  A  e^{\varphi  A^{1/2}  }\bu \Vert_{L^2}^\frac{1}{2}
\\
&\times
\Vert  A^{1/2}  e^{\varphi  A^{1/2}  }\bv \Vert_{L^2}
\Vert  A  e^{\varphi  A^{1/2}  }\bw \Vert_{L^2}.
\end{align*}
holds.
\end{lemma}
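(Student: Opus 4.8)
The plan is to pass to the Fourier side, peel off the Gevrey weight using the subadditivity of $|\cdot|$, and reduce the statement to a classical product estimate closed by H\"older's and Agmon's inequalities. Concretely, I would first expand everything in Fourier modes: writing $\bu=\sum_\bj\hat\bu_\bj e^{i\bj\cdot\bx}$ (and similarly for $\bv,\bw$), the symbol of $A^{1/2}e^{\varphi A^{1/2}}$ is $|\bk|e^{\varphi|\bk|}$, while $\widehat{(\bu\cdot\nabla)\bv}_\bk=i\sum_{\bj+\bl=\bk}(\hat\bu_\bj\cdot\bl)\,\hat\bv_\bl$. Since $A^{1/2}$ and $e^{\varphi A^{1/2}}$ commute with the self-adjoint Leray projector $P$ and $\bw\in\mathbb{H}^2(\mathbb{T}^d)$ is divergence-free (so $P\bw=\bw$), the projector drops out of the pairing, and Parseval's identity gives, up to a fixed normalization constant,
\begin{align*}
\big\langle A^{1/2}e^{\varphi A^{1/2}}P((\bu\cdot\nabla)\bv),\, A^{1/2}e^{\varphi A^{1/2}}\bw\big\rangle
= i\sum_{\bj+\bl=\bk}|\bk|^2 e^{2\varphi|\bk|}\,(\hat\bu_\bj\cdot\bl)\,(\hat\bv_\bl\cdot\overline{\hat\bw_\bk}),
\end{align*}
where $\bk=\bj+\bl$.

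Next I would take absolute values, using $|\hat\bu_\bj\cdot\bl|\le|\hat\bu_\bj|\,|\bl|$ and $|\hat\bv_\bl\cdot\overline{\hat\bw_\bk}|\le|\hat\bv_\bl|\,|\hat\bw_\bk|$. The decisive step is the distribution of the weight: since $\varphi\ge0$ and $|\bk|=|\bj+\bl|\le|\bj|+|\bl|$, one has $e^{2\varphi|\bk|}\le e^{\varphi|\bj|}e^{\varphi|\bl|}e^{\varphi|\bk|}$, so the exponential splits across the three frequencies. Introducing the real scalar functions $U,V,W$ with nonnegative Fourier coefficients $\hat U_\bj=e^{\varphi|\bj|}|\hat\bu_\bj|$, $\hat V_\bl=e^{\varphi|\bl|}|\hat\bv_\bl|$ and $\hat W_\bk=e^{\varphi|\bk|}|\hat\bw_\bk|$, this yields
\begin{align*}
\big|\big\langle A^{1/2}e^{\varphi A^{1/2}}P((\bu\cdot\nabla)\bv),\, A^{1/2}e^{\varphi A^{1/2}}\bw\big\rangle\big|
\le\sum_{\bj+\bl=\bk}\hat U_\bj\,\big(|\bl|\hat V_\bl\big)\,\big(|\bk|^2\hat W_\bk\big).
\end{align*}
The inner convolution over $\bj+\bl=\bk$ is exactly the Fourier coefficient of the product $U\,(A^{1/2}V)$, so Cauchy--Schwarz in $\bk$ together with Parseval bounds the right-hand side by $\|U\,A^{1/2}V\|_{L^2}\,\|AW\|_{L^2}$.

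It then remains to control $\|U\,A^{1/2}V\|_{L^2}$. By H\"older, $\|U\,A^{1/2}V\|_{L^2}\le\|U\|_{L^\infty}\,\|A^{1/2}V\|_{L^2}$, and by Agmon's inequality (valid on $\mathbb{T}^d$ for $d\in\{2,3\}$), $\|U\|_{L^\infty}\lesssim\|U\|_{\dot{H}^1}^{1/2}\|U\|_{\dot{H}^2}^{1/2}$. By construction of $U,V,W$ one has $\|U\|_{\dot{H}^1}=\|A^{1/2}e^{\varphi A^{1/2}}\bu\|_{L^2}$, $\|U\|_{\dot{H}^2}=\|Ae^{\varphi A^{1/2}}\bu\|_{L^2}$, $\|A^{1/2}V\|_{L^2}=\|A^{1/2}e^{\varphi A^{1/2}}\bv\|_{L^2}$ and $\|AW\|_{L^2}=\|Ae^{\varphi A^{1/2}}\bw\|_{L^2}$, so collecting the factors gives precisely the asserted bound. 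The membership $P((\bu\cdot\nabla)\bv)\in D(e^{\varphi A^{1/2}}:\mathbb{L}^2(\mathbb{T}^d))$ follows a fortiori from the same weight-splitting, which gives the frequencywise bound $e^{\varphi|\bk|}|\widehat{(\bu\cdot\nabla)\bv}_\bk|\le\widehat{U\,A^{1/2}V}_\bk$; since $P$ is an $L^2$-contraction commuting with $e^{\varphi A^{1/2}}$, this produces $\|e^{\varphi A^{1/2}}P((\bu\cdot\nabla)\bv)\|_{L^2}\le\|U\,A^{1/2}V\|_{L^2}\le\|U\|_{L^\infty}\|A^{1/2}V\|_{L^2}<\infty$ whenever $\bu,\bv\in D(e^{\varphi A^{1/2}}:\mathbb{H}^2(\mathbb{T}^d))$.

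I expect the only genuine obstacle to sit in the last step in three dimensions: the naive bound $\|U\|_{L^\infty}\lesssim\|U\|_{\dot{H}^{3/2}}$ fails because $\dot{H}^{3/2}\not\hookrightarrow L^\infty$ when $d=3$ (equivalently, $\sum_{\bk}|\bk|^{-3}$ diverges), so the interpolation $\|U\|_{L^\infty}\lesssim\|U\|_{\dot{H}^1}^{1/2}\|U\|_{\dot{H}^2}^{1/2}$ must be extracted from the genuine Agmon inequality via a Littlewood--Paley decomposition with optimization over the cutoff scale, rather than from a single Cauchy--Schwarz split of $\sum_\bk|\hat U_\bk|$. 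A secondary point requiring care is the passage to the scalar majorants $U,V,W$: bounding the vector dot products by products of Euclidean magnitudes only enlarges the sum, and by construction $U,V,W$ carry exactly the same homogeneous Sobolev norms as $e^{\varphi A^{1/2}}\bu$, $e^{\varphi A^{1/2}}\bv$ and $e^{\varphi A^{1/2}}\bw$, so no regularity is lost in the reduction.
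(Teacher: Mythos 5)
Your argument is correct and is essentially the classical Foias--Temam argument that the paper itself relies on (the paper gives no proof of Lemma~\ref{lem:main}, deferring entirely to \cite[Lemma 2.1]{foias1989gevrey}): expand in Fourier modes, drop the Leray projector onto the divergence-free $\bw$, split the Gevrey weight via $|\bj+\bl|\le|\bj|+|\bl|$, pass to the nonnegative scalar majorants $U,V,W$, and close with Cauchy--Schwarz, H\"older and Agmon. Your closing caveat is also exactly right: $\Vert U\Vert_{L^\infty}\lesssim \Vert U\Vert_{\dot H^1}^{1/2}\Vert U\Vert_{\dot H^2}^{1/2}$ must come from splitting $\sum_{\bk}\hat U_{\bk}$ at a frequency cutoff and optimizing (which works for $d=2,3$), not from a single $\dot H^{3/2}$ bound, so there is no gap.
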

\noindent The next lemma, whose proof can be found in the appendix, is a further useful estimate for the deterministic nonlinear term in the Navier--Stokes equation.
\begin{lemma}
\label{lem:h1convectiveX}
For $\bu,\bv\in D(e^{\varphi  A^{1/2}} : \mathbb{H}^1(\mathbb{T}^d))$ with $\varphi\geq0$, it holds that
\begin{align*}
\Vert A^{1/2} e^{\varphi A^{1/2}} (\bu\cdot\bv)\Vert_{L^2}
&\lesssim
\Vert  e^{\varphi A^{1/2}} \bu \Vert_{L^2}
\Vert  e^{\varphi A^{1/2}} \bv \Vert_{H^1}
+
\Vert  e^{\varphi A^{1/2}}\bu \Vert_{H^1}
\Vert  e^{\varphi A^{1/2}} \bv \Vert_{L^2}.
\end{align*}
\end{lemma}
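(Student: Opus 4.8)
The plan is to pass to the Fourier side and reduce the weighted (Gevrey) bound to a weight-free Sobolev product estimate by a standard majorisation. Writing $w=\bu\cdot\bv$, its Fourier coefficients form the convolution $\hat{w}_\bk=\sum_{\bj}\hat{\bu}_\bj\cdot\hat{\bv}_{\bk-\bj}$, and the same structure holds for any pointwise bilinear product of the two fields. Since $\varphi\ge 0$ and $|\bk|\le|\bj|+|\bk-\bj|$, the analytic weight is submultiplicative, $e^{\varphi|\bk|}\le e^{\varphi|\bj|}\,e^{\varphi|\bk-\bj|}$. Introducing the auxiliary functions with nonnegative Fourier coefficients,
\[
\tilde{\bu}(\bx)=\sum_{\bk}e^{\varphi|\bk|}\,|\hat{\bu}_\bk|\,e^{i\bk\cdot\bx},\qquad \tilde{\bv}(\bx)=\sum_{\bk}e^{\varphi|\bk|}\,|\hat{\bv}_\bk|\,e^{i\bk\cdot\bx},
\]
the triangle inequality on each mode yields the pointwise majorisation $e^{\varphi|\bk|}\,|\hat{w}_\bk|\le(\widehat{\tilde{\bu}\,\tilde{\bv}})_\bk$. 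Because the $L^2$- and $H^1$-norms depend only on the moduli of the Fourier coefficients, $\Vert\tilde{\bu}\Vert_{L^2}=\Vert e^{\varphi A^{1/2}}\bu\Vert_{L^2}$ and $\Vert\tilde{\bu}\Vert_{H^1}=\Vert e^{\varphi A^{1/2}}\bu\Vert_{H^1}$, and likewise for $\tilde{\bv}$. It therefore suffices to prove the weight-free estimate $\Vert A^{1/2}(\tilde{\bu}\tilde{\bv})\Vert_{L^2}\lesssim\Vert\tilde{\bu}\Vert_{L^2}\Vert\tilde{\bv}\Vert_{H^1}+\Vert\tilde{\bu}\Vert_{H^1}\Vert\tilde{\bv}\Vert_{L^2}$ for functions with nonnegative coefficients, which then transfers back to the claimed bound.

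Next I would distribute the single derivative carried by $A^{1/2}$. For mean-free $g$ one has $\Vert A^{1/2}g\Vert_{L^2}=\Vert g\Vert_{H^1}=\Vert\nabla g\Vert_{L^2}$; equivalently, the bound $|\bk|\le|\bj|+|\bk-\bj|$ splits the weighted convolution into two pieces. Using $\nabla(\tilde{\bu}\tilde{\bv})=(\nabla\tilde{\bu})\,\tilde{\bv}+\tilde{\bu}\,(\nabla\tilde{\bv})$ and the triangle inequality, the task reduces to
\[
\Vert A^{1/2}(\tilde{\bu}\tilde{\bv})\Vert_{L^2}\le\Vert(\nabla\tilde{\bu})\,\tilde{\bv}\Vert_{L^2}+\Vert\tilde{\bu}\,(\nabla\tilde{\bv})\Vert_{L^2},
\]
so that it remains to bound each pointwise product in $L^2$ by one full-derivative norm of one factor times a lower-order norm of the other.

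Finally I would estimate the two products by Hölder's inequality combined with the Gagliardo--Nirenberg/Ladyzhenskaya inequalities available in dimension $d\le 3$, interpolating $L^p$-norms between $L^2$ and $H^1$ (for instance $\Vert\tilde{\bu}\Vert_{L^4}\lesssim\Vert\tilde{\bu}\Vert_{L^2}^{1-d/4}\Vert\tilde{\bu}\Vert_{H^1}^{d/4}$ on $\mathbb{T}^d$). The hard part is exactly this last step: $\dot H^1$ is \emph{not} a Banach algebra when $d\ge 2$, so the high-frequency ($\text{high}\times\text{high}\to\text{high}$) portion of the convolution cannot be absorbed by the raw product $\Vert\cdot\Vert_{L^2}\Vert\cdot\Vert_{H^1}$ by Hölder alone. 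Reconciling the convolution with these mismatched norms is the crux, and it is here that one must exploit the geometry of the frequency interactions together with the smoothing furnished by the analytic weight $e^{\varphi A^{1/2}}$, which damps precisely the high output frequencies produced by such interactions; keeping the resulting constant controlled as $\varphi\downarrow 0$ requires particular care. Once this product estimate is secured, undoing the majorisation of the first step returns the stated bound.
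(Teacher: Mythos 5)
Your two preparatory reductions are exactly the ones the paper's proof uses: submultiplicativity $e^{\varphi\vert\bj+\bk\vert}\le e^{\varphi\vert\bj\vert}e^{\varphi\vert\bk\vert}$ to pass to the majorant functions $\tilde\bu,\tilde\bv$, and $\vert\bj+\bk\vert\le\vert\bj\vert+\vert\bk\vert$ to distribute the derivative. The problem is the step you leave open, and you should push your own diagnosis to its conclusion: that step is not merely the crux, it is impossible, so neither ``frequency geometry'' nor the analytic weight can rescue it. Indeed, your majorisation shows that the weighted claim for every $\varphi\ge0$ is a consequence of the unweighted one, while $\varphi=0$ is itself an admissible case of the statement; hence the weight factors out of the problem entirely and provides no damping. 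And the unweighted inequality is false for $d\ge2$. On $\mathbb{T}^2$ take the divergence-free, mean-free trigonometric polynomials $\bu=(u(x_2),0)$ and $\bv=(e^{iNx_2}u(x_2),0)$, where $u=M^{-1/2}\sum_{j=1}^{M}e^{ijx_2}$ is the $L^2$-normalised Dirichlet kernel and $N\gg M$ (taking real parts changes nothing essential, since all Fourier coefficients involved are nonnegative). Then $\Vert\bu\Vert_{L^2}\sim\Vert\bv\Vert_{L^2}\sim1$, $\Vert\bu\Vert_{H^1}\sim M$, $\Vert\bv\Vert_{H^1}\sim N$, so the right-hand side of the lemma is of order $N$, whereas $\bu\cdot\bv=e^{iNx_2}u^2$ is supported on frequencies of size $\sim N$ and $\Vert u^2\Vert_{L^2}=\Vert u\Vert_{L^4}^2\sim\sqrt{M}$, so that
\begin{align*}
\Vert A^{1/2}(\bu\cdot\bv)\Vert_{L^2}\sim N\sqrt{M}\gg N\sim\Vert\bu\Vert_{L^2}\Vert\bv\Vert_{H^1}+\Vert\bu\Vert_{H^1}\Vert\bv\Vert_{L^2}.
\end{align*}
Letting $M\to\infty$ destroys the stated bound: after cancelling the factor $N$ carried by the high output frequency, the lemma would force $\Vert uv\Vert_{L^2}\lesssim\Vert u\Vert_{L^2}\Vert v\Vert_{L^2}$, i.e.\ that $L^2$ behave like an algebra; in convolution language, this is the failure of $\ell^2\ast\ell^2\to\ell^2$ that you correctly identified.

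You should also know that the paper's own proof founders at precisely this point, so your proposal and the paper share both the first two reductions and the fatal step. After the same splitting, the paper invokes Young's inequality $\Vert a\ast b\Vert_{\ell^2}\le\Vert a\Vert_{\ell^2}\Vert b\Vert_{\ell^1}$ with $b_\bk=e^{\varphi\vert\bk\vert}\vert\hat{\bv}_\bk\vert$, and then replaces $\Vert b\Vert_{\ell^1}=\sum_\bk e^{\varphi\vert\bk\vert}\vert\hat{\bv}_\bk\vert$ by $\Vert e^{\varphi A^{1/2}}\bv\Vert_{L^2}=\Vert b\Vert_{\ell^2}$, calling this Jensen's inequality; but that bounds an $\ell^1$ norm by an $\ell^2$ norm, which is false on an infinite lattice (only $\Vert\cdot\Vert_{\ell^2}\le\Vert\cdot\Vert_{\ell^1}$ holds). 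The unavoidable repair is Cauchy--Schwarz, $\Vert b\Vert_{\ell^1}\le\big(\sum_{\bk\neq0}\vert\bk\vert^{-2s}\big)^{1/2}\Vert e^{\varphi A^{1/2}}\bv\Vert_{H^{s}}$, which converges only for $s>d/2$; this is the Banach-algebra threshold appearing in the references the paper cites (Ferrari--Titi; Oliver). What this route actually proves is
\begin{align*}
\Vert A^{1/2}e^{\varphi A^{1/2}}(\bu\cdot\bv)\Vert_{L^2}
\lesssim_s
\Vert e^{\varphi A^{1/2}}\bu\Vert_{H^1}\Vert e^{\varphi A^{1/2}}\bv\Vert_{H^{s}}
+
\Vert e^{\varphi A^{1/2}}\bu\Vert_{H^{s}}\Vert e^{\varphi A^{1/2}}\bv\Vert_{H^1},
\qquad s>d/2,
\end{align*}
and any use of Lemma \ref{lem:h1convectiveX} downstream (the bound \eqref{ithreeNn} for $I_3$, and the bound for $K_2^k$) must be rerun with this weaker estimate: the products against $\bm{\xi}_k$ survive because \eqref{xikbounded} grants $\bm{\xi}_k$ unlimited regularity to absorb the index $s$, but for the quadratic term $(\bu^n\cdot\nabla)\bu^n$ the corrected estimate demands regularity beyond the $H^2$ level the scheme controls, so that step requires a genuinely different argument.
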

\noindent Our next goal is to give a cancellation property
for the sum of the quadratic variation  term and the Stratonovich-to-It\^o's correction term when we apply It\^o's formula to the mapping $t \mapsto \frac{1}{2} \Vert  A^{1/2}  e^{\varphi A^{1/2}  } \bu(t) \Vert^2_{L^2} $. Its proof can also be found in the appendix, Section \ref{sec:appendix}.
\begin{lemma} 
\label{cor:xiAppendix}
Let $r\geq 0, s>0$ and $\varphi\geq 0$. For any
 $ \bm{\xi}_k\in D(e^{\varphi  A^{1/2s}} : \mathbb{H}^{r+2}(\mathbb{T}^d))$
and $\bu\in D(e^{\varphi  A^{1/2s}} : \mathbb{H}^{r+2}(\mathbb{T}^d))$, we have that
\begin{align*}
 \big\langle
 A^r   e^{\varphi  A^{1/2s}  }
     ((\bm{\xi}_k \cdot \nabla) (\bm{\xi}_k \cdot \nabla)\bu )
\, ,\, &  A^r   e^{\varphi  A^{1/2s}  } \bu
\big\rangle 
+
\big\Vert
 A^r  e^{\varphi  A^{1/2s}  }
    ((\bm{\xi}_k \cdot \nabla)\bu)
\big\Vert_{L^2}^2 =0
\end{align*}
for any $k\geq1$
\end{lemma}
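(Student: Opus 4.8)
The plan is to reduce the identity to the skew-adjointness of the divergence-free transport operator $\bm{\xi}_k\cdot\nabla$ on $L^2$. Write $\Lambda := A^r e^{\varphi A^{1/2s}}$ and set $\bw := \Lambda\bu$. Since $\bu$ and $\bm{\xi}_k$ lie in $D(e^{\varphi A^{1/2s}} : \mathbb{H}^{r+2}(\mathbb{T}^d))$ they are real-analytic, hence smooth, so every product and derivative below is well defined and the commutation assumption \eqref{strongAssumption} applies. First I would use \eqref{strongAssumption} with $\bv=\bu$ to obtain $\Lambda((\bm{\xi}_k\cdot\nabla)\bu)=(\bm{\xi}_k\cdot\nabla)\bw$, so that the second term of the claim becomes exactly $\Vert(\bm{\xi}_k\cdot\nabla)\bw\Vert_{L^2}^2$. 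Then I would apply \eqref{strongAssumption} a second time, now with $\bv=(\bm{\xi}_k\cdot\nabla)\bu$, to move $\Lambda$ through both copies of the transport operator:
\begin{align*}
\Lambda\big((\bm{\xi}_k\cdot\nabla)(\bm{\xi}_k\cdot\nabla)\bu\big)
=(\bm{\xi}_k\cdot\nabla)\Lambda\big((\bm{\xi}_k\cdot\nabla)\bu\big)
=(\bm{\xi}_k\cdot\nabla)(\bm{\xi}_k\cdot\nabla)\bw.
\end{align*}
After these two reductions the whole expression depends only on $\bw$, and the assertion becomes
\begin{align*}
\big\langle (\bm{\xi}_k\cdot\nabla)(\bm{\xi}_k\cdot\nabla)\bw,\bw\big\rangle
+\big\Vert(\bm{\xi}_k\cdot\nabla)\bw\big\Vert_{L^2}^2=0.
\end{align*}

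The heart of the argument is the integration-by-parts identity stating that, for a solenoidal field $\bm{\xi}_k$ on the torus, the first-order operator $\bm{\xi}_k\cdot\nabla$ is skew-adjoint on $L^2$, that is
\begin{align*}
\big\langle (\bm{\xi}_k\cdot\nabla)\bv,\bw\big\rangle=-\big\langle \bv,(\bm{\xi}_k\cdot\nabla)\bw\big\rangle
\end{align*}
for smooth vector fields $\bv,\bw$. I would prove this componentwise by writing $\bm{\xi}_k\cdot\nabla=\sum_j (\bm{\xi}_k)_j\partial_j$, using the Leibniz rule $(\partial_j\bv)\cdot\bw=\partial_j(\bv\cdot\bw)-\bv\cdot(\partial_j\bw)$, and noting that
\begin{align*}
\int_{\mathbb{T}^d}\sum_j(\bm{\xi}_k)_j\,\partial_j(\bv\cdot\bw)\dx
=-\int_{\mathbb{T}^d}(\divx\bm{\xi}_k)\,(\bv\cdot\bw)\dx=0,
\end{align*}
since $\divx\bm{\xi}_k=0$ and there are no boundary contributions on $\mathbb{T}^d$.

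Finally I would apply this skew-adjointness with the choice $\bv=(\bm{\xi}_k\cdot\nabla)\bw$, which yields
\begin{align*}
\big\langle (\bm{\xi}_k\cdot\nabla)(\bm{\xi}_k\cdot\nabla)\bw,\bw\big\rangle
=-\big\langle (\bm{\xi}_k\cdot\nabla)\bw,(\bm{\xi}_k\cdot\nabla)\bw\big\rangle
=-\big\Vert(\bm{\xi}_k\cdot\nabla)\bw\big\Vert_{L^2}^2,
\end{align*}
so that the two terms cancel and the identity follows. The only delicate point — and the step I would treat with care — is regularity: one must check that $\bw=\Lambda\bu$ and $(\bm{\xi}_k\cdot\nabla)\bw$ are smooth enough both for the twofold use of \eqref{strongAssumption} and for the integration by parts to be legitimate. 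This is precisely where the analyticity of the data enters, since $\bu,\bm{\xi}_k\in D(e^{\varphi A^{1/2s}}:\mathbb{H}^{r+2}(\mathbb{T}^d))$ guarantees that all the quantities appearing above are smooth, that the periodic boundary terms vanish, and that each manipulation is justified.
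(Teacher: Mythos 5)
Your proof is correct and takes essentially the same route as the paper: two applications of the commutation assumption \eqref{strongAssumption} (once with $\bv=\bu$ and once with $\bv=(\bm{\xi}_k\cdot\nabla)\bu$) combined with the skew-adjointness of $\bm{\xi}_k\cdot\nabla$ on $L^2(\mathbb{T}^d)$, which follows from $\divx\bm{\xi}_k=0$ and periodicity. The only cosmetic difference is that you commute both transport operators through $A^re^{\varphi A^{1/2s}}$ before integrating by parts, whereas the paper interleaves the two steps; the ingredients and their justification are identical.
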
 { 
\noindent Our final preparatory result is an adaptation of \cite[Lemma 5.1]{glatt2009strong} to Gevrey spaces. Compare with \cite[Lemma 7.1]{glatt2014local} and \cite[Lemma 2.1]{breit2019stochastic} and see the appendix below for the proof.
}
In the following, we fix $r\geq0$, $s>0$, $\nu\geq0$ and let  $\varphi\geq0$ be an arbitrary time-dependent function of bounded variation. 
Now define $\mathcal{E}(T)$ as
\begin{align*}
\mathcal{E}(T)
:=
C\big([0,T];D(e^{\varphi(t)  A^{1/2s}} : \mathbb{H}^r(\mathbb{T}^d))\big) \cap L^2\big((0,T);D(e^{\varphi(t)  A^{1/2s}} : \mathbb{H}^{r+1}(\mathbb{T}^d))\big)
\end{align*}
with the norm
\begin{align*}
\Vert \bu^N  \Vert_{\mathcal{E}(\tau)}^2
&:=
\sup_{\sigma\in[0,{  \tau} ]}\Vert A^{r/2}  e^{\varphi(\sigma) A^{1/2s}} \bu^N(\sigma) \Vert^2_{L^2} 
\\&+
\nu
\int_0^{ \tau}
\Vert  A^{(r+1)/2} e^{\varphi(\sigma)A^{1/2s}}
 \bu^N
\Vert_{L^2}^2 \ds.
\end{align*}

\begin{lemma}
\label{lem:comparison}
Let $s>0$ and $r,\varphi\geq 0$. Let $(\Omega, \mathcal{F}, (\mathcal{F}_t)_{t\geq 0}, \mathbb{P})$ be a stochastic basis with the complete right-continuous filtration $(\mathcal{F}_t)_{t\geq 0}$. Let $\bu^N$ be an $(\mathcal{F}_t)$-adapted continuous stochastic process valued in
$D(e^{\varphi(t)  A^{1/2s}} : \mathbb{H}^{r+1}(\mathbb{T}^d))$.
For a deterministic $M>1$ and $T>0$,
define the stopping times $\mathcal{T}^M_N$ as
\begin{align*}
\mathcal{T}^M_N:=\inf \bigg\{  \tau\in [0,T]\,:\,
\Vert \bu^N \Vert_{\mathcal{E}(\tau)}
>
\Vert   e^{\varphi(0)A^{1/2s} }\bu^N_0 \Vert_{H^r} 
+ M
\bigg\} \wedge T
\end{align*}
and set $\mathcal{T}^M_{Nn}:= \mathcal{T}^M_N \wedge \mathcal{T}^M_n$. If
 \begin{equation}
\begin{aligned}
\label{probZero}
\lim_{t\rightarrow0}\sup_{N\in \mathbb{N}}
\mathbb{P} \bigg(
\Vert \bu^N \Vert_{\mathcal{E}(t \wedge \mathcal{T}^M_N)}>
\Vert  e^{\varphi(0)A^{1/2s} }\bu^N_0 \Vert_{H^r} 
+ (M-1)\bigg)
=0
 \end{aligned}
\end{equation}
and
\begin{equation}
\begin{aligned}
\label{expoZero}
\lim_{n\rightarrow\infty}\sup_{N\geq n}
\mathbb{E} \Vert  \bu^N -\bu^n  \Vert_{\mathcal{E}(\mathcal{T}^M_{Nn})} 
=0
 \end{aligned}
\end{equation}
hold, then 
\begin{itemize}
\item there exists a stopping time $\tau$ with
\begin{align}
\label{strictlyPositiveStopTime}
\mathbb{P}(0<\tau \leq T)=1
\end{align}
\item there exists a process $\bu(\cdot)=\bu(\cdot\wedge \tau) \in \mathcal{E}(\tau)$ such that up to taking a subsequence (not relabelled),
\begin{align}
\label{cauchyZero}
\Vert \bu^N - \bu  \Vert_{\mathcal{E}(\tau)} \rightarrow 0 \qquad a.s.;
\end{align}
\item also,
\begin{align}
\label{cauchyZeroM}
\Vert  \bu \Vert_{\mathcal{E}(\tau)} \leq 
M +
\sup_{N}\Vert   e^{\varphi(0)A^{1/2s} }\bu^N_0 \Vert_{H^r} 
 \qquad a.s.
\end{align}
\end{itemize}
\end{lemma}

\section{Gevrey regularity: the construction}
\label{sec:GevReg}
\subsection{Estimates}
With an initial condition $\bu_0\in L^\infty_{\mathcal{F}_0}\big(\Omega; \mathbb{H}^1(\mathbb{T}^d)\big)$ and a dataset $(\bg_k, \bm{\xi}_k)_{k\geq1}$ satisfying the assumptions in Section \ref{sec:assumptions}, our goal now is to construct a solution $\bu$ of \eqref{eq:mainLU} that lives in the Bochner space $C([0,T); D(e^{t  A^{1/2}} : \mathbb{H}^1(\mathbb{T}^d)))$ a.s.
We will achieve this result by using a Galerkin approximation. In particular, we let $P^N:L^2(\mathbb{T}^d) \rightarrow H_N$ be the $L^2$-orthogonal projection onto 
 $H_N=\text{span}\{e^{i\bk\cdot \bx} \, \text{ with }\,   \vert\bk \vert \leq N\}$, and consider the finite-dimensional stochastic  differential equation (SDE)
\begin{equation}
\begin{aligned}
\label{eq:galerkin}
    &\dd \bu^N  +[  P^N P ((\bu^N\cdot\nabla) \bu^N)  -\nu  \Delta \bu^N   ]\dt 
    \\&\qquad\qquad\qquad\qquad
    =
    \frac{1}{2}\sum_{k\geq1} P^NP((\bm{\xi}_k \cdot\nabla )P(\bm{\xi}_k \cdot\nabla )\bu^N)  \dt
    \\&\qquad\qquad\qquad\qquad
    +
     \sum_{k\geq1}P^NP\big[\bg_k(\bu^N)- ((\bm{\xi}_k \cdot\nabla )\bu^N) \big]  \dd W_t^k,
    \\
    &\bu^N_0(\bx)= P^N\bu_0(\bx).
\end{aligned}
\end{equation}
Since \eqref{eq:galerkin} is an $N$-dimensional system  of SDEs, for $\bu_0^N\in L^\infty_{\mathcal{F}_0}(\Omega;H_N )$ we can construct a solution
$\bu^N$ in $L^\infty(0,T;H_N)$ 
that solves \eqref{eq:galerkin} a.e in space-time. See for example, \cite{capinski1994stochastic, flandoli1995martingale}. Furthermore, we can show the following.

\begin{lemma}
\label{lem:criteria}
Suppose that there exists a deterministic $K_0>0$ such that 
\begin{align}
\Vert \bu^N_0 \Vert_{H^1}^2\leq K_0 \qquad \text{a.s.}
\end{align}
For $\bu^N_0\in H_N$, let  $\bu^N\in C([0,T];H_N)$ be the corresponding Galerkin solution of \eqref{eq:galerkin}. Now  for any deterministic constant $M>1$, define the stopping times $\mathcal{T}^M_N$ as
\begin{align*}
\mathcal{T}^M_N:=\inf \bigg\{  \tau\in [0,T]\,:\,
\sup_{\sigma\in[0,{  \tau} ]}&\Vert   e^{\sigma  A^{1/2} } \bu^N(\sigma) \Vert^2_{H^1} 
+
\nu  \int_0^{ \tau}
\Vert  e^{\sigma A^{1/2}}
 \bu^N
\Vert_{H^2}^2 \ds
\\&>
\Vert     
\bu^N_0 \Vert^2_{H^1} 
+ M
\bigg\} \wedge T
\end{align*}
and set $\mathcal{T}^M_{Nn}:= \mathcal{T}^M_N \cap \mathcal{T}^M_n$. Then for any $\tilde{M}\in(0,M)$,
\begin{align}
\nonumber
\lim_{t\rightarrow0}\sup_{N\in \mathbb{N}}
\mathbb{P} \bigg(\sup_{\sigma\in[0,{t \wedge \mathcal{T}^M_N} ]}\Vert    e^{\sigma  A^{1/2} } \bu^N(\sigma) \Vert^2_{H^1} 
&+
\nu  \int_0^{t \wedge \mathcal{T}^M_N}
\Vert    e^{\sigma A^{1/2}}
 \bu^N
\Vert_{H^2}^2 \ds
\\
&>
\Vert   
\bu^N_0 \Vert^2_{H^1} 
+ \tilde{M}\bigg)
=0
\label{probZeroX}
 \end{align}
and
\begin{equation}
\begin{aligned} 
\label{expecZeroX}
\lim_{n\rightarrow\infty}\sup_{N\geq n}
\mathbb{E} \bigg(&\sup_{\sigma\in[0,{ \mathcal{T}^M_{Nn}} ]}\Vert   e^{\sigma A^{1/2} } [\bu^N(\sigma)-\bu^n(\sigma)] \Vert^2_{H^1} 
\\&+
\nu  \int_0^{\mathcal{T}^M_{Nn}}
\Vert    e^{\sigma  A^{1/2} }
 [\bu^N-\bu^n]
\Vert_{H^2}^2 \ds
\bigg)
=0.
 \end{aligned}
\end{equation}
\end{lemma}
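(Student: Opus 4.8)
The plan is to apply the Itô formula to the functional
$\sigma \mapsto \tfrac{1}{2}\Vert A^{1/2} e^{\sigma A^{1/2}} \bu^N(\sigma)\Vert_{L^2}^2$
(which equals $\tfrac{1}{2}\Vert e^{\sigma A^{1/2}}\bu^N(\sigma)\Vert_{H^1}^2$),
obtain a differential inequality for this enstrophy-in-Gevrey-norm quantity, and then close the estimate up to the stopping time $\mathcal{T}^M_N$. The two claims \eqref{probZeroX} and \eqref{expecZeroX} correspond exactly to the two hypotheses \eqref{probZero} and the Cauchy condition of the comparison Lemma \ref{lem:comparison} (with $r=s=1$, $\varphi(\sigma)=\sigma$), so verifying them lets us invoke that lemma to pass to the limit and construct the solution.

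\textbf{The a priori estimate.} First I would compute the Itô differential. Since the Gevrey weight $e^{\sigma A^{1/2}}$ is itself time-dependent, the drift picks up the extra \emph{good} term $-\Vert A^{3/4} e^{\sigma A^{1/2}}\bu^N\Vert_{L^2}^2$ coming from $\partial_\sigma e^{\sigma A^{1/2}} = A^{1/2} e^{\sigma A^{1/2}}$; this is the mechanism by which analyticity is generated. The viscous term supplies $-\nu \Vert A e^{\sigma A^{1/2}}\bu^N\Vert_{L^2}^2$. For the nonlinear term I would apply Lemma \ref{lem:main} with $\varphi=\sigma$, $\bu=\bv=\bw=\bu^N$, giving a bound by $\Vert A^{1/2} e^{\sigma A^{1/2}}\bu^N\Vert_{L^2}^{3/2}\Vert A e^{\sigma A^{1/2}}\bu^N\Vert_{L^2}^{3/2}$, which Young's inequality absorbs into the viscous dissipation at the cost of a lower-order term in $\Vert A^{1/2} e^{\sigma A^{1/2}}\bu^N\Vert_{L^2}$. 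The transport-noise drift and its Itô correction cancel against the martingale's quadratic variation by Lemma \ref{cor:xiAppendix} (using assumption \eqref{strongAssumption}), so the transport noise contributes nothing to the drift. The multiplicative noise drift vanishes and its quadratic variation is controlled by the growth assumption \eqref{noiseGrowth} with $\sigma_1=\sigma$; the orthogonality \eqref{orthoTwoNoise} is what permits handling the two noise families separately. After taking the supremum in time and applying the Burkholder--Davis--Gundy inequality to the remaining stochastic integral, I obtain a closed estimate on $\Vert \bu^N\Vert_{\mathcal{E}(t\wedge\mathcal{T}^M_N)}$.

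\textbf{The two limits.} For \eqref{probZeroX}: on the interval $[0, t\wedge\mathcal{T}^M_N]$ the quantity is, by definition of the stopping time, bounded, so the closed estimate together with BDG yields a bound whose excess over $\Vert\bu^N_0\Vert_{H^1}^2$ is $O(t)$ uniformly in $N$; Chebyshev then forces the probability in \eqref{probZeroX} to vanish as $t\to0$ for any fixed $\tilde M<M$. For \eqref{expecZeroX}: I would write the equation satisfied by the difference $\bu^N-\bu^n$ and run the same Itô/Gevrey computation on its enstrophy norm. The nonlinear difference is handled by bilinearity of the convective term and Lemmas \ref{lem:main}--\ref{lem:h1convectiveX}, producing terms that are either absorbed by dissipation or controlled by $\Vert \bu^N-\bu^n\Vert$ times bounded factors; the Lipschitz assumption \eqref{noiseLipschitz} controls the multiplicative-noise difference, and the transport cancellation persists for the difference. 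A Gronwall argument closes the estimate, and the residual is driven to zero by the projection error $\Vert (P^N-P^n)(\cdots)\Vert$, which tends to $0$ as $n\to\infty$ uniformly for $N\geq n$ by \eqref{continuityProperty} and dominated convergence.

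\textbf{Main obstacle.} The delicate point is the nonlinear term in the \emph{difference} estimate \eqref{expecZeroX}: unlike the single a priori bound, here one cannot simply absorb everything into dissipation, because the Gevrey norm of $\bu^N-\bu^n$ must be estimated using the full strength of the product Lemma \ref{lem:h1convectiveX} while keeping track of which factor carries the difference and which carries the (uniformly bounded on the stopping interval) individual solution. Balancing these so that the dissipation absorbs the top-order derivative while the remaining factors stay controlled on $[0,\mathcal{T}^M_{Nn}]$ — and ensuring the constants stay uniform in $N$ and $n$ — is the crux of the argument.
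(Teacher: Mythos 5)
Your overall strategy coincides with the paper's: It\^o's formula applied to $\sigma\mapsto\frac12\Vert e^{\sigma A^{1/2}}\bu^N(\sigma)\Vert_{H^1}^2$, Lemma \ref{lem:main} plus Young's inequality to absorb the convection into the viscous dissipation, Lemma \ref{cor:xiAppendix} to cancel the transport-noise Stratonovich--It\^o corrector against the corresponding quadratic variation, the growth/Lipschitz assumptions for the $\bg_k$ terms, Chebyshev/Doob for \eqref{probZeroX}, and a difference equation plus Gr\"onwall for \eqref{expecZeroX}. There is, however, one concrete error in your account of the a priori estimate: the term produced by differentiating the time-dependent Gevrey weight has the wrong sign. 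Since $\partial_\sigma e^{\sigma A^{1/2}}=A^{1/2}e^{\sigma A^{1/2}}$, the drift acquires
\begin{align*}
+\big\langle A e^{\sigma A^{1/2}}\bu^N,\, A^{1/2}e^{\sigma A^{1/2}}\bu^N\big\rangle
=+\Vert A^{3/4}e^{\sigma A^{1/2}}\bu^N\Vert_{L^2}^2,
\end{align*}
a \emph{positive} (bad) contribution, not the negative ``good'' term you describe. If you discard it as helpful, the resulting upper bound is false. The actual mechanism is the reverse of what you state: this term is of lower order than the viscous dissipation ($A^{3/4}$ versus $A$), so by interpolation and Young it is absorbed as $\frac{\nu}{4}\Vert e^{\sigma A^{1/2}}\bu^N\Vert_{H^2}^2+c(\nu)\Vert e^{\sigma A^{1/2}}\bu^N\Vert_{H^1}^2$; the analyticity is paid for by the viscosity, which is why the radius $\varphi(t)=t$ cannot be chosen arbitrarily large. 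This is exactly how the paper handles the third right-hand term of \eqref{apriori1}.

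A second, more minor point concerns the difference estimate: the transport cancellation does \emph{not} simply ``persist for the difference,'' because $\bu^N$ and $\bu^n$ carry different projections. The paper must decompose the relevant terms via $P^N-P^n=Q^nP^N$, use the orthogonality $\langle P^n\cdot\,,Q^n\cdot\rangle=0$, and extract a quantitative $n^{-2}$ gain from \eqref{poinare} before Lemma \ref{cor:xiAppendix} applies to the surviving $P^n$-projected part. Likewise, your appeal to dominated convergence for the projection residual is replaced in the paper by this explicit $1/n$ rate (together with the uniform bound up to $\mathcal{T}^M_{Nn}$), which is what makes the limit in \eqref{expecZeroX} uniform over $N\geq n$. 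Both issues are repairable, but they are precisely where the work lies.
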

\begin{proof}
By applying It\^o's formula   to the mapping $t\mapsto \frac{1}{2}  \Vert    e^{\sigma  A^{1/2}  } \bu^N(t) \Vert^2_{H^1} $, it follows from \eqref{eq:galerkin} that
\begin{align}
\frac{1}{2}& \Vert    e^{\sigma  A^{1/2}  } \bu^N(t) \Vert^2_{H^1} 
+
\nu \int_0^t \Vert
 e^{\sigma  A^{1/2}  }
     \bu^N
\Vert_{H^2} \ds
= 
\frac{1}{2} \Vert    
 \bu^N_0 \Vert^2_{H^1}
  \nonumber
\\
&-
\int_0^t \big\langle
 A^{1/2}   e^{\sigma  A^{1/2}  }
    P^NP (\bu^N\cdot\nabla) \bu^N
,\,   A^{1/2}   e^{\sigma  A^{1/2}  } \bu^N 
\big\rangle \ds
\nonumber
\\
&+
\int_0^t \big\langle
  A   e^{\sigma  A^{1/2}  } \bu^N 
,\,   A^{1/2}   e^{\sigma  A^{1/2}  } \bu^N 
\big\rangle \ds
\nonumber
\\
&+
\frac{1}{2}\int_0^t
\sum_{k\geq 1}
\big\Vert
   e^{\sigma  A^{1/2}  }
   P^NP[\bg_k(\bu^N)
   -
   ((\bm{\xi}_k \cdot \nabla)\bu^N)]
\big\Vert_{H^1}^2 \ds
\nonumber
\\
&+
\frac{1}{2}\int_0^t\sum_{k\geq 1}
 \big\langle
 A^{1/2}   e^{\sigma  A^{1/2}  }
   P^N P[(\bm{\xi}_k \cdot \nabla)P(\bm{\xi}_k \cdot \nabla)\bu^N]
,\,   A^{1/2}   e^{\sigma  A^{1/2}  } \bu^N 
\big\rangle \ds
\nonumber
\\
&+\int_0^t\sum_{k\geq 1}
 \big\langle
 A^{1/2}   e^{\sigma  A^{1/2}  }
   P^NP\bg_k(\bu^N)
,\,   A^{1/2}   e^{\sigma  A^{1/2}  } \bu^N 
\big\rangle \dd W^k_\sigma
\nonumber
\\
&-\int_0^t\sum_{k\geq 1}
 \big\langle
 A^{1/2}   e^{\sigma  A^{1/2}  }
   P^N P((\bm{\xi}_k \cdot \nabla)\bu^N)
,\,   A^{1/2}   e^{\sigma  A^{1/2}  } \bu^N 
\big\rangle \dd W^k_\sigma.
\label{apriori1}
 \end{align}
We now estimate the various deterministic integrals to the right of the equation above. First of all,  note that by using  Lemma \ref{lem:main}, we have 
\begin{equation}
\begin{aligned}  
\label{convectEst}
&\int_0^{t \wedge \mathcal{T}^M_N}\big\vert\big\langle  A^{1/2}  e^{\sigma  A^{1/2}  } P^NP ((\bu^N
\cdot \nabla)\bu^N ),  A^{1/2}  e^{\sigma  A^{1/2}  } \bu^N \big\rangle \big\vert\ds
\\&\leq
\frac{\nu}{4}
\int_0^{t \wedge \mathcal{T}^M_N}
\Vert     e^{\sigma  A^{1/2}  }\bu^N \Vert_{H^2}^2
\ds
+c(\nu)\int_0^{t \wedge \mathcal{T}^M_N}
\Vert  e^{\sigma  A^{1/2}  }\bu^N  \Vert_{H^1}^6\ds.
\end{aligned}
\end{equation} 
On the other hand, we have
 that
\begin{equation}
\begin{aligned}
\bigg\vert&\int_0^{t \wedge \mathcal{T}^M_N} \big\langle
 A   e^{\sigma  A^{1/2}  } \bu^N 
,\,   A^{1/2}   e^{\sigma  A^{1/2}  } \bu^N 
\big\rangle \ds \bigg\vert
\\&\leq 
\frac{\nu}{4}  \int_0^{t \wedge \mathcal{T}^M_N}
\Vert     e^{\sigma  A^{1/2}  }
 \bu^N
\Vert_{H^2}^2 \ds
+ c(\nu)
\int_0^{t \wedge \mathcal{T}^M_N}
\Vert     e^{\sigma  A^{1/2}  }
 \bu^N
\Vert_{H^1}^2 \ds.
\end{aligned}
\end{equation}
Now use the Polarization Identity and \eqref{orthoTwoNoise} to write
\begin{equation}
\begin{aligned}
\label{splitTwo}
\big\Vert
   e^{\sigma  A^{1/2}  }
   P^NP[\bg_k(\bu^N)
   &-
   ((\bm{\xi}_k \cdot \nabla)\bu^N)]
\big\Vert_{H^1}^2
=
\big\Vert
   e^{\sigma  A^{1/2}  }
   P^NP \bg_k(\bu^N)
\big\Vert_{H^1}^2
\\&+
\big\Vert
   e^{\sigma  A^{1/2}  }
   P^N
   P((\bm{\xi}_k \cdot \nabla)\bu^N)
\big\Vert_{H^1}^2.
\end{aligned}
\end{equation}
By using the  assumption on $(\bg_k)_{k\geq1}$ in Section \ref{sec:assumptions} and the continuity of $P^NP$, we can estimate the first term in \eqref{splitTwo} as follows
\begin{equation}
\begin{aligned}
\frac{1}{2}
\int_0^{t \wedge \mathcal{T}^M_N}&
\sum_{k\geq 1}
\big\Vert
   e^{\sigma  A^{1/2}  }
  P^NP \bg_k(\bu^N)
\big\Vert_{H^1}^2 \ds
\\&
\lesssim
\int_0^{t \wedge \mathcal{T}^M_N}
\big(1+
\Vert
       e^{\sigma  A^{1/2}  }
 \bu^N
\Vert_{H^1}^2 \big) \ds.
\end{aligned}
\end{equation}
Next, we combine the second term in \eqref{splitTwo} with the fifth right-hand term in \eqref{apriori1} and estimate both by using  
 Lemma \ref{cor:xiAppendix}. This yields
\begin{equation}
\begin{aligned}
\bigg\vert&\int_0^{t \wedge \mathcal{T}^M_N} \sum_{k\geq 1}
 \big\langle
 A^{1/2}   e^{\sigma  A^{1/2}  }
   P^N P[(\bm{\xi}_k \cdot \nabla) P(\bm{\xi}_k \cdot \nabla)\bu^N]
,\,   A^{1/2}   e^{\sigma  A^{1/2}  } \bu^N 
\big\rangle \ds
\\&+
\sum_{k\geq 1}
\big\Vert
    e^{\sigma  A^{1/2}  }
   P^NP((\bm{\xi}_k \cdot \nabla)\bu^N)
\big\Vert_{H^1}^2 \ds
 \bigg\vert
\lesssim  \int_0^{t \wedge \mathcal{T}^M_N}
\Vert     e^{\sigma  A^{1/2}  }
 \bu^N
\Vert_{H^1}^2 \ds.
\end{aligned}
\end{equation}
By collecting the various estimates above, we obtain
\begin{equation}
\begin{aligned}
\label{estAllapriori}
&\sup_{\sigma\in[0,{t \wedge \mathcal{T}^M_N} ]}\Vert     e^{\sigma  A^{1/2}  } \bu^N(\sigma) \Vert^2_{H^1} 
+
\nu  \int_0^{t \wedge \mathcal{T}^M_N}
\Vert     e^{\sigma  A^{1/2}  }
 \bu^N
\Vert_{H^2}^2 \ds
\leq
\Vert    
 \bu^N_0 \Vert^2_{H^1} 
\\
&+c(\nu)\int_0^{t \wedge \mathcal{T}^M_N}
\big(1+
\Vert
      e^{\sigma  A^{1/2}  }
 \bu^N
\Vert_{H^1}^6
\big) \ds
\\
&+c(\nu)\sup_{\sigma\in[0,{t \wedge \mathcal{T}^M_N} ]}\bigg\vert\int_0^\sigma\sum_{k\geq 1}
 \big\langle
 A^{1/2}   e^{ s  A^{1/2}  }
  P^NP \bg_k(\bu^N)
,\,   A^{1/2}   e^{ s  A^{1/2}  } \bu^N 
\big\rangle \dd W^k_s
\bigg\vert
\\
&+c(\nu)\sup_{\sigma\in[0,{t \wedge \mathcal{T}^M_N} ]}\bigg\vert\int_0^\sigma\sum_{k\geq 1}
 \big\langle
 A^{1/2}   e^{ s  A^{1/2}  }
  P^N P((\bm{\xi}_k\cdot \nabla)\bu^N)
,\,   A^{1/2}   e^{ s  A^{1/2}  } \bu^N 
\big\rangle \dd W^k_s
\bigg\vert.
 \end{aligned}
\end{equation}
It therefore follow that for every $\tilde{M}\in(0,M)$,
\begin{equation}
\begin{aligned}
\mathbb{P} &\bigg(\sup_{\sigma\in[0,{t \wedge \mathcal{T}^M_N} ]}\Vert   e^{\sigma  A^{1/2}  } \bu^N(\sigma) \Vert^2_{H^1} 
+
\nu  \int_0^{t \wedge \mathcal{T}^M_N}
\Vert     e^{\sigma  A^{1/2}  }
 \bu^N
\Vert_{H^2}^2 \ds
\\&\qquad\qquad\qquad\qquad
>
\Vert     
\bu^N_0 \Vert^2_{H^2} 
+ \tilde{M}\bigg)
\\
&\leq \mathbb{P}\bigg(
c(\nu)\int_0^{t \wedge \mathcal{T}^M_N}
\big(1+
\Vert
     e^{\sigma  A^{1/2}  }
 \bu^N
\Vert_{H^1}^6
\big) \ds
> \frac{\tilde{M}}{3} \bigg)
\\
&+
\mathbb{P}\bigg(
c(\nu)\sup_{\sigma\in[0,{t \wedge \mathcal{T}^M_N} ]}\bigg\vert\int_0^\sigma\sum_{k\geq 1}
 \big\langle
 A^{1/2}   e^{ s  A^{1/2}  }
  P^N P\bg_k(\bu^N)
,\,
\\&\qquad\qquad\qquad\qquad
   A^{1/2}   e^{ s  A^{1/2}  } \bu^N 
\big\rangle \dd W^k_s
\bigg\vert
> \frac{\tilde{M}}{3} \bigg)
\\
&+
\mathbb{P}\bigg(
c(\nu)\sup_{\sigma\in[0,{t \wedge \mathcal{T}^M_N} ]}\bigg\vert\int_0^\sigma\sum_{k\geq 1}
 \big\langle
 A^{1/2}   e^{ s  A^{1/2}  }
  P^NP( (\bm{\xi}_k\cdot \nabla)\bu^N)
,\,   
\\&\qquad\qquad\qquad\qquad
A^{1/2}   e^{ s  A^{1/2}  } \bu^N 
\big\rangle \dd W^k_s
\bigg\vert
> \frac{\tilde{M}}{3} \bigg).
 \end{aligned}
\end{equation}
To estimate the first term on the right, we use Chebyshev's inequality just as in \cite{glatt2009strong} which yields
\begin{equation}
\begin{aligned}
\mathbb{P}\bigg(
c(\nu)\int_0^{t \wedge \mathcal{T}^M_N}&
\big(1+
\Vert
      e^{\sigma  A^{1/2}  }
 \bu^N
\Vert_{H^1}^6
\big) \ds
> \frac{\tilde{M}}{2} \bigg)
\\&\leq
c_{\nu,\tilde{M}}
\mathbb{E}
\int_0^{t \wedge \mathcal{T}^M_N}
\big(1+
\Vert
      e^{\sigma  A^{1/2}  }
 \bu^N
\Vert_{H^1}^6
\big) \ds
\\
&\leq
c_{\nu,\tilde{M},M}
\mathbb{E}
\int_0^{t }
 \ds
 \\
&\leq
c_{\nu,\tilde{M},M}
t.
 \end{aligned}
\end{equation}
On the other hand, by Doob’s inequality and the assumption on the noise in Section \ref{sec:assumptions},
\begin{equation}
\begin{aligned}
\mathbb{P}&\bigg(
c(\nu)\sup_{\sigma\in[0,{t \wedge \mathcal{T}^M_N} ]}\bigg\vert\int_0^\sigma\sum_{k\geq 1}
 \big\langle
 A^{1/2}   e^{ s  A^{1/2}  }
  P^N P\bg_k(\bu^N)
,\, 
\\&\qquad\qquad\qquad\qquad  A^{1/2}   e^{ s  A^{1/2}  } \bu^N 
\big\rangle \dd W^k_s
\bigg\vert
> \frac{\tilde{M}}{3} \bigg)
\\&
\leq c_{\nu,\tilde{M}}
\mathbb{E}
\int_0^{t \wedge \mathcal{T}^M_N}
\Vert     e^{\sigma  A^{1/2}  } \bu^N  \Vert_{H^1}^2
\sum_{k\geq 1}
 \Vert
   e^{\sigma  A^{1/2}  }
  P^N P\bg_k(\bu^N)
\Vert_{H^1}^2 \ds
 \\
&\leq
c_{\nu,\tilde{M},M}
t.
 \end{aligned}
\end{equation}
Similarly, we obtain by using  the first estimate of Lemma \ref{lem:main} and the assumption on the transport noise in Section \ref{sec:assumptions} that
\begin{equation}
\begin{aligned}
\mathbb{P}\bigg(
c(\nu)\sup_ {\sigma\in[0,{t \wedge \mathcal{T}^M_N} ]}\bigg\vert&\int_0^\sigma\sum_{k\geq 1}
 \big\langle
 A^{1/2}   e^{ s  A^{1/2}  }
  P^N P((\bm{\xi}_k \cdot \nabla)\bu^N)
,\,   
\\&\qquad\qquad\qquad\qquad
A^{1/2}   e^{ s  A^{1/2}  } \bu^N 
\big\rangle \dd W^k_s
\bigg\vert
> \frac{\tilde{M}^2}{3} \bigg)
\\&\leq
c_{\nu,\tilde{M},M}
t.
 \end{aligned}
\end{equation}
We therefore conclude that
\begin{equation}
\begin{aligned}
\mathbb{P} \bigg(\sup_{\sigma\in[0,{t \wedge \mathcal{T}^M_N} ]}\Vert   e^{\sigma  A^{1/2}  } \bu^N(\sigma) \Vert^2_{H^1} 
&+
\nu  \int_0^{t \wedge \mathcal{T}^M_N}
\Vert      e^{\sigma  A^{1/2}  }
 \bu^N
\Vert_{H^2}^2 \ds
\\&>
\Vert     
 \bu^N_0 \Vert^2_{H^1} 
+ \tilde{M} \bigg)
\leq 
c_{\nu,\tilde{M},M}
t
 \end{aligned}
\end{equation}
an thus, \eqref{probZeroX} immediately follows.
\\ \\
Our next goal is to show \eqref{expecZeroX}. For this, we take $N,n\in \mathbb{N}$ with
 $N\geq n$.  
We now let  $\bu^N$ and $\bu^n$ be the two Galerkin solutions of \eqref{eq:galerkin} with corresponding data $P^N\bu_0$ and $P^n\bu_0$ respectively. Set $\bu^{Nn}=\bu^N-\bu^n$ so that $\bu^{Nn}$ solves 
\begin{equation}
\begin{aligned}
\label{eq:galerkinNn}
\dd \bu^{Nn}  &+[  P^N P ((\bu^N\cdot\nabla) \bu^N)- P^n P ((\bu^n\cdot\nabla) \bu^n)  -\nu  \Delta \bu^{Nn}   ]\dt 
    \\&
    = 
    \frac{1}{2}\sum_{k\geq1}\big[ P^NP((\bm{\xi}_k \cdot\nabla)P( \bm{\xi}_k \cdot\nabla) \bu^N) 
    -
     P^nP((\bm{\xi}_k \cdot\nabla)P( \bm{\xi}_k \cdot\nabla) \bu^n)
     \big] \dt
    \\&
    +
    \sum_{k\geq1}[P^NP\bg_k(\bu^N)-P^nP\bg_k(\bu^n)] \dd W_t^k
\\&
-\sum_{k\geq1}\big[P^N  P((\bm{\xi}_k \cdot\nabla )\bu^N)- P^n P((\bm{\xi}_k \cdot\nabla )\bu^n) \big]  \dd W_t^k    
    ,
\\
&\bu^{Nn}_0= (P^N-P^n)\bu_0.
\end{aligned}
\end{equation}
We have by the product rule,
\begin{equation}
\begin{aligned}
    \dd &(  A^{1/2}   e^{ t  A^{1/2} } \bu^{Nn})= 
     A  e^{ t  A^{1/2} } \bu^{Nn} \dt
   \\&
   -
    A^{1/2}   e^{ t  A^{1/2} }
   [ P^N P (\bu^N\cdot\nabla) \bu^N 
   -
   P^n P (\bu^n\cdot\nabla) \bu^n 
    +\nu   A  \bu^{Nn}  ]\dt 
   \\&
   +
    \frac{1}{2}\sum_{k\geq1} A^{1/2}   e^{ t  A^{1/2} }\big[ P^NP((\bm{\xi}_k \cdot\nabla )P(\bm{\xi}_k \cdot\nabla )\bu^N) 
    \\&\qquad\qquad\qquad\qquad
    -
     P^nP((\bm{\xi}_k \cdot\nabla )P(\bm{\xi}_k \cdot\nabla )\bu^n)
     \big] \dt
    \\&
   + \sum_{k\geq1} A^{1/2}   e^{ t  A^{1/2} }[P^NP\bg_k(\bu^N)-P^nP\bg_k(\bu^n)] \dd W_t^k
      \\&
   - \sum_{k\geq1} A^{1/2}   e^{ t  A^{1/2} }\big[P^N  P((\bm{\xi}_k \cdot\nabla )\bu^N)- P^n P((\bm{\xi}_k \cdot\nabla )\bu^n) \big]  \dd W_t^k
\end{aligned}
\end{equation}
and thus, it follows that
{ 
\begin{equation}
\begin{aligned}
\label{diffEqnIto}
\frac{1}{2} &\Vert    e^{t  A^{1/2}  } \bu^{Nn}(t) \Vert^2_{H^1} 
+
\nu 
\int_0^t \Vert e^{\sigma  A^{1/2}  } \bu^{Nn} \Vert^2_{H^2}  \ds
= 
\frac{1}{2} \Vert    
\bu^{Nn}_0 \Vert^2_{H^1} 
\\
&+
\int_0^t \big\langle
  A   e^{\sigma  A^{1/2}  } \bu^{Nn}
,\,   A^{1/2}   e^{\sigma  A^{1/2}  } \bu^{Nn} 
\big\rangle \ds
\\
&-
\int_0^t \big\langle
 A^{1/2}   e^{\sigma  A^{1/2}  }
    [ P^N P (\bu^N\cdot\nabla) \bu^N 
   \\&\qquad\qquad\qquad\qquad
   -
   P^n P (\bu^n\cdot\nabla) \bu^n]
,\,   A^{1/2}   e^{\sigma  A^{1/2}  } \bu^{Nn} 
\big\rangle \ds
\\
&+
\mathcal{C}_1((\mathbf{g}_k,\bm{\xi}_k,\bu^n,\bu^N,\bu^{Nn})(t)
+\mathcal{C}_2(\mathbf{g}_k,\bm{\xi}_k,\bu^n,\bu^N,\bu^{Nn})(t)
\\
&+\int_0^t\sum_{k\geq 1}
 \big\langle
 A^{1/2}   e^{\sigma  A^{1/2}  }
   [P^NP\bg_k(\bu^N)-P^nP\bg_k(\bu^n)]
,\,  
\\&\qquad\qquad\qquad\qquad
 A^{1/2}   e^{\sigma  A^{1/2}  } \bu^{Nn} 
\big\rangle \dd W^k_\sigma
\\
&-\int_0^t\sum_{k\geq 1}
 \big\langle
 A^{1/2}   e^{\sigma  A^{1/2}  }
   \big[P^N  P((\bm{\xi}_k \cdot\nabla )\bu^N)- P^n P((\bm{\xi}_k \cdot\nabla )\bu^n) \big]
,\,  
\\&\qquad\qquad\qquad\qquad
 A^{1/2}   e^{\sigma  A^{1/2}  } \bu^{Nn} 
\big\rangle \dd W^k_\sigma
 \end{aligned}
\end{equation} 
where
\begin{align*}
\mathcal{C}_1&(\mathbf{g}_k,\bm{\xi}_k,\bu^n,\bu^N,\bu^{Nn})(t)
\\&:=
\frac{1}{2}\int_0^t
\sum_{k\geq 1}
\big\Vert
   e^{\sigma  A^{1/2}  }
   [P^NP\bg_k(\bu^N)-P^nP\bg_k(\bu^n)]
   \\&
   \qquad
   -
   e^{ \sigma  A^{1/2} }\big[P^N  P((\bm{\xi}_k \cdot\nabla )\bu^N)- P^n P((\bm{\xi}_k \cdot\nabla )\bu^n) \big]
\big\Vert_{H^1}^2 \ds
\end{align*}
is the It\^o correction term and
\begin{align*}
\mathcal{C}_2&(\mathbf{g}_k,\bm{\xi}_k,\bu^n,\bu^N,\bu^{Nn})(t)
\\&:=
\frac{1}{2}\int_0^t\sum_{k\geq 1}
 \big\langle
 A^{1/2}   e^{\sigma  A^{1/2}  }
   P^N P[(\bm{\xi}_k \cdot \nabla) P(\bm{\xi}_k \cdot \nabla)\bu^N]
,\,   A^{1/2}   e^{\sigma  A^{1/2}  } \bu^{Nn} 
\big\rangle \ds
\\
&\quad-\frac{1}{2}\int_0^t\sum_{k\geq 1}
 \big\langle
 A^{1/2}   e^{\sigma  A^{1/2}  }
   P^n P[(\bm{\xi}_k \cdot \nabla) P(\bm{\xi}_k \cdot \nabla)\bu^n]
,\,   A^{1/2}   e^{\sigma  A^{1/2}  } \bu^{Nn} 
\big\rangle \ds
\end{align*}
is the Stratonovich corrector. }
For $\mathcal{T}^M_{Nn}:= \mathcal{T}^M_N \wedge \mathcal{T}^M_n$,
we have that
\begin{equation}
\begin{aligned}
\bigg\vert&\int_0^{\mathcal{T}^M_{Nn}} \big\langle
  A   e^{t  A^{1/2}  } \bu^{Nn} 
,\,   A^{1/2}   e^{t  A^{1/2}  } \bu^{Nn} 
\big\rangle \dt \bigg\vert
\\&\leq 
\frac{\nu}{8}  \int_0^{\mathcal{T}^M_{Nn}}
\Vert     e^{t  A^{1/2}  }
 \bu^{Nn}
\Vert_{H^2}^2 \dt
+ c(\nu)
\int_0^{\mathcal{T}^M_{Nn}}
\Vert     e^{t  A^{1/2}  }
 \bu^{Nn}
\Vert_{H^1}^2 \dt.
\end{aligned}
\end{equation}
In order to estimate the nonlinear convective term, we rewrite it as follows
\begin{equation}
\begin{aligned}
\label{differConvec}
\big\langle
 A^{1/2}   e^{ t   A^{1/2}  }
    [ P^N P (\bu^N\cdot\nabla) \bu^N 
   &-
   P^n P (\bu^n\cdot\nabla) \bu^n]
,\,   A^{1/2}   e^{ t   A^{1/2}  } \bu^{Nn} 
\big\rangle
\\&= I_1+I_2+I_3
 \end{aligned}
\end{equation}
where
\begin{align*}
&I_1:= \big\langle
 A^{1/2}   e^{ t   A^{1/2}  }
    [ P^N P (\bu^{Nn}\cdot\nabla) \bu^N 
   ]
,\,   A^{1/2}   e^{ t   A^{1/2}  } \bu^{Nn} 
\big\rangle,
\\
&I_2:= \big\langle
 A^{1/2}   e^{ t   A^{1/2}  }
    [ P^N P (\bu^n\cdot\nabla) \bu^{Nn} 
   ]
,\,   A^{1/2}   e^{ t   A^{1/2}  } \bu^{Nn}
\big\rangle,
\\
&I_3:= \big\langle
 A^{1/2}   e^{ t   A^{1/2}  }
    [ (P^N-P^n) P (\bu^n\cdot\nabla) \bu^n 
  ]
,\,   A^{1/2}   e^{ t   A^{1/2}  } \bu^{Nn} 
\big\rangle.
\end{align*}
By using Lemma \ref{lem:main}, we obtain
\begin{align}
\vert I_1 \vert
&\lesssim
\Vert  A^{1/2}  e^{  t   A^{1/2}  }\bu^{Nn} \Vert_{L^2}^\frac{1}{2}
\Vert  A  e^{  t   A^{1/2}  }\bu^{Nn} \Vert_{L^2}^\frac{1}{2}
\Vert  A^{1/2}  e^{  t   A^{1/2}  }\bu^N \Vert_{L^2}
\Vert  A  e^{  t   A^{1/2}  }\bu^{Nn} \Vert_{L^2}
\nonumber
\\&
\leq 
\frac{\nu}{8}
\Vert     e^{  t   A^{1/2}  }\bu^{Nn} \Vert_{H^2}^2
+
c(\nu)
\Vert    e^{  t   A^{1/2}  }\bu^{Nn} \Vert_{H^1}^2
\Vert   e^{  t   A^{1/2}  }\bu^N \Vert_{H^1}^4.
\label{ioneNn}
\end{align}
Next,
\begin{equation}
\begin{aligned}
\label{itwoNn}
\vert I_2 \vert
&\lesssim
\Vert  A^{1/2}  e^{  t   A^{1/2}  }\bu^n \Vert_{L^2}^\frac{1}{2}
\Vert  A  e^{  t   A^{1/2}  }\bu^n \Vert_{L^2}^\frac{1}{2}
\Vert  A^{1/2}  e^{  t   A^{1/2}  }\bu^{Nn} \Vert_{L^2}
\Vert  A  e^{  t   A^{1/2}  }\bu^{Nn} \Vert_{L^2}
\\&
\leq 
\frac{\nu}{8}
\Vert     e^{  t   A^{1/2}  }\bu^{Nn} \Vert_{H^2}^2
+
c(\nu)
\Vert     e^{  t   A^{1/2}  }\bu^n \Vert_{H^2}^2
\Vert   e^{  t   A^{1/2}  }\bu^{Nn} \Vert_{H^1}^2.
\end{aligned}
\end{equation}
Finally, if we set $Q^n= I-P^n$ as the projection onto the modes higher than $n$,  then we can use the identity $P^N-P^n=Q^nP^N$ which holds for $N\geq n$ to obtain 
\begin{equation}
\begin{aligned}
\label{ithreeNn}
\vert I_3 \vert &=\big\vert \big\langle
e^{ t   A^{1/2}  }
    Q^n P^N P (\bu^n\cdot\nabla) \bu^n 
,\,   A   e^{ t   A^{1/2}  } \bu^{Nn} 
\big\rangle \big\vert
\\&
\leq \frac{\nu}{8}
\Vert      e^{ t   A^{1/2}  } \bu^{Nn} 
\Vert_{H^2}^2
+c(\nu)
\Vert  
    Q^n P^N e^{ t   A^{1/2}  } P (\bu^n\cdot\nabla) \bu^n 
\Vert_{L^2}^2.
\end{aligned}
\end{equation}
However, by employing the second   inequality in \eqref{poinare}, the continuity of $P^N$ shown in \eqref{continuityProperty} and  Lemma \ref{lem:h1convectiveX}, we obtain
\begin{equation}
\begin{aligned}
\Vert  &
    Q^n P^N e^{ t  A^{1/2} } P (\bu^n\cdot\nabla) \bu^n 
\Vert_{L^2}^2 
\leq
\frac{2}{n^2}
\Vert    e^{ t  A^{1/2} } P (\bu^n\cdot\nabla) \bu^n 
\Vert_{H^1}^2
\\&
\lesssim
\frac{1}{n^2}\Big( 
\Vert  e^{ t  A^{1/2}}\bu^n  \Vert_{L^2}^2
\Vert  e^{ t  A^{1/2}} \bu^n \Vert_{H^2}^2 
+
\Vert  e^{ t  A^{1/2}} \bu^n \Vert_{H^1}^2
\Vert e^{ t  A^{1/2}}  \bu^n  \Vert_{H^1}^2
\Big)
\\&
\lesssim
\frac{1}{n^2} 
\Vert  e^{ t  A^{1/2}}\bu^n  \Vert_{H^1}^2
\Vert  e^{ t  A^{1/2}} \bu^n \Vert_{H^2}^2  
\end{aligned}
\end{equation}
where the last inequality follows from the first estimate in \eqref{poinare}. Thus, it follows that
\begin{equation}
\begin{aligned}
\vert I_3 \vert 
\leq \frac{\nu}{8}
\Vert      e^{ t   A^{1/2}  } \bu^{Nn} 
\Vert_{H^2}^2
+\frac{c(\nu)}{n}\Vert  e^{ t  A^{1/2}}\bu^n  \Vert_{H^1}^2
\Vert  e^{ t  A^{1/2}} \bu^n \Vert_{H^2}^2 
\end{aligned}
\end{equation}
and as such,
\begin{equation}
\begin{aligned}
\bigg\vert&\int_0^{\mathcal{T}^M_{Nn}} \big\langle
 A^{1/2}   e^{ t   A^{1/2}  }
    [ P^N P (\bu^N\cdot\nabla) \bu^N 
   -
   P^n P (\bu^n\cdot\nabla) \bu^n]
,\,   A^{1/2}   e^{ t   A^{1/2}  } \bu^{Nn} 
\big\rangle \dt \bigg\vert
\\&\leq 
\frac{3\nu}{8}  \int_0^{\mathcal{T}^M_{Nn}}
\Vert     e^{ t   A^{1/2}  }
 \bu^{Nn}
\Vert_{H^2}^2 \dt
+ c(\nu)
\int_0^{\mathcal{T}^M_{Nn}}
\frac{1}{n}\Vert  e^{ t  A^{1/2}}\bu^n  \Vert_{H^1}^2
\Vert  e^{ t  A^{1/2}} \bu^n \Vert_{H^2}^2  \dt
\\&+ c(\nu)
\int_0^{\mathcal{T}^M_{Nn}}
\Vert    e^{ t   A^{1/2}  }\bu^{Nn} \Vert_{H^1}^2
\Big(
\Vert   e^{ t   A^{1/2}  }\bu^N \Vert_{H^1}^4 
+ 
\Vert     e^{ t   A^{1/2}  }\bu^n \Vert_{H^2}^2
\Big) \dt.
\end{aligned}
\end{equation}
Next, we use the Polarization Identity and \eqref{orthoTwoNoise} to obtain
\begin{equation}
\begin{aligned}
\label{triangleNoise}
\big\Vert
   e^{ t   A^{1/2}  }
   &[P^NP\bg_k(\bu^N)-P^nP\bg_k(\bu^n)]
   \\&-
   e^{  t   A^{1/2} }\big[P^N  P((\bm{\xi}_k \cdot\nabla )\bu^N)- P^n P((\bm{\xi}_k \cdot\nabla )\bu^n) \big]
\big\Vert_{H^1}^2
\\&=
\big\Vert
   e^{ t   A^{1/2}  }
   [P^NP\bg_k(\bu^N)-P^nP\bg_k(\bu^n)]
\big\Vert_{H^1}^2
\\&+
\big\Vert
   e^{  t   A^{1/2} }\big[P^N  P((\bm{\xi}_k \cdot\nabla )\bu^N)- P^n P((\bm{\xi}_k \cdot\nabla )\bu^n) \big]
\big\Vert_{H^1}^2.
\end{aligned}
\end{equation}
To estimate the first term on the right of \eqref{triangleNoise}, we use the triangle inequality again to obtain
\begin{equation}
\begin{aligned}
&  
\big\Vert
   e^{ t   A^{1/2}  }
   [P^NP\bg_k(\bu^N)-P^nP\bg_k(\bu^n)]
\big\Vert_{H^1}^2  
\\&\lesssim
\big\Vert
 P^NP  e^{ t   A^{1/2}  }
   [\bg_k(\bu^N)-\bg_k(\bu^n)]
\big\Vert_{H^1}^2  
+
\big\Vert
 Q^nP^N P e^{ t   A^{1/2}  }
   \bg_k(\bu^n)
\big\Vert_{H^1}^2  
\end{aligned}
\end{equation}
where $Q^n= I-P^n$. Thus, we obtain by using the assumptions on $(\bg_k)_{k\geq1}$ in Section \ref{sec:assumptions}, the second  inequality in \eqref{poinare} and the continuity of $P^NP$ that,
\begin{equation}
\begin{aligned}
\label{multinoiseNn}
\bigg\vert\int_0^{\mathcal{T}^M_{Nn}} & 
\sum_{k\geq 1}
\big\Vert
   e^{ t   A^{1/2}  }
   [P^NP\bg_k(\bu^N)-P^nP\bg_k(\bu^n)]
\big\Vert_{H^1}^2 \dt  \bigg\vert
\\&\lesssim 
\int_0^{\mathcal{T}^M_{Nn}}
\bigg(
\Vert   e^{ t   A^{1/2}  }\bu^{Nn} \Vert_{H^1}^2
+
\frac{1}{n^2}\Big(1+
\Vert   e^{ t   A^{1/2}  }\bu^n \Vert_{H^2}^2\Big)
\bigg) \dt.
\end{aligned}
\end{equation}
We now combine the treatment of the last term in \eqref{triangleNoise} with the $\mathcal{C}_2$-term in \eqref{diffEqnIto}. In particular, we note from \eqref{projectionIdentity} and \eqref{poinare} that
\begin{equation}
\begin{aligned}
J_1^k&:=
\big\Vert
   e^{  t   A^{1/2} }\big[P^N  P((\bm{\xi}_k \cdot\nabla )\bu^N)- P^n P((\bm{\xi}_k \cdot\nabla )\bu^n) \big]
\big\Vert_{H^1}^2
\\&
=
\big\Vert
   e^{  t   A^{1/2} }Q^nP^N  P(\bm{\xi}_k \cdot\nabla \bu^N)
\big\Vert_{H^1}^2
+
\big\Vert
   e^{  t   A^{1/2} }P^n  P(\bm{\xi}_k \cdot\nabla \bu^{Nn})
\big\Vert_{H^1}^2
\\&
\leq
\frac{1}{n^2}
\big\Vert
   e^{  t   A^{1/2} }P^N  P(\bm{\xi}_k \cdot\nabla \bu^N)
\big\Vert_{H^2}^2
+
\big\Vert
   e^{  t   A^{1/2} }P^n  P(\bm{\xi}_k \cdot\nabla \bu^{Nn})
\big\Vert_{H^1}^2
\end{aligned}
\end{equation}
and
\begin{equation}
\begin{aligned}
J_2^k&:= 
 \big\langle
 A^{1/2}   e^{ t   A^{1/2}  }
   P^N P[(\bm{\xi}_k \cdot \nabla) P(\bm{\xi}_k \cdot \nabla)\bu^N]
,\,   A^{1/2}   e^{ t   A^{1/2}  } \bu^{Nn} 
\big\rangle,
\\
J_3^k&:=-
 \big\langle
 A^{1/2}   e^{ t   A^{1/2}  }
   P^n P[(\bm{\xi}_k \cdot \nabla) P(\bm{\xi}_k \cdot \nabla)\bu^n]
,\,   A^{1/2}   e^{ t   A^{1/2}  } \bu^{Nn} 
\big\rangle
\end{aligned}
\end{equation}
are such that
\begin{equation}
\begin{aligned}
J_2^k+J_3^k
&:=
 \big\langle
 A^{1/2}   e^{ t   A^{1/2}  }
   Q^nP^N P[(\bm{\xi}_k \cdot \nabla) P(\bm{\xi}_k \cdot \nabla)\bu^N]
,\,   A^{1/2}   e^{ t   A^{1/2}  } \bu^{N} 
\big\rangle 
\\
&+
 \big\langle
 A^{1/2}   e^{ t   A^{1/2}  }
   P^n P[(\bm{\xi}_k \cdot \nabla) P(\bm{\xi}_k \cdot \nabla)\bu^{Nn}]
,\,   A^{1/2}   e^{ t   A^{1/2}  } \bu^{Nn} 
\big\rangle
\\
&-
 \big\langle
 A^{1/2}   e^{ t   A^{1/2}  }
   Q^nP^N P[(\bm{\xi}_k \cdot \nabla) P(\bm{\xi}_k \cdot \nabla)\bu^N]
,\,   A^{1/2}   e^{ t   A^{1/2}  } \bu^{n} 
\big\rangle.
\end{aligned}
\end{equation}
Now note that since $\bu^n=P^n\bu^n$, the last term above is
\begin{equation}
\begin{aligned}
 \big\langle
  Q^nA^{1/2}   e^{ t   A^{1/2}  }
  P^N P[(\bm{\xi}_k \cdot \nabla) P(\bm{\xi}_k \cdot \nabla)\bu^N]
,\,  P^n A^{1/2}   e^{ t   A^{1/2}  } \bu^{n} 
\big\rangle
=0.
\end{aligned}
\end{equation}
On the other hand, since $Q^n=Q^nQ^n$  is a projection, it follows from the second estimate in \eqref{poinare} that
\begin{equation}
\begin{aligned}
 \big\langle&
 A^{1/2}   e^{ t   A^{1/2}  }
   Q^nP^N P[(\bm{\xi}_k \cdot \nabla) P(\bm{\xi}_k \cdot \nabla)\bu^N]
,\,   A^{1/2}   e^{ t   A^{1/2}  } \bu^{N} 
\big\rangle 
\\
&\leq
\frac{1}{n^2}
 \big\langle
 A    e^{ t   A^{1/2}  }
   P^N P[(\bm{\xi}_k \cdot \nabla) P(\bm{\xi}_k \cdot \nabla)\bu^N]
,\,    A    e^{ t   A^{1/2}  } \bu^{N} 
\big\rangle.
\end{aligned}
\end{equation}
Therefore, by pairing each of the two terms estimating $J_1^k$ with the first two terms of the summand $J_2^k+J_3^k$, respectively, and using Lemma \ref{cor:xiAppendix}, keeping in mind the continuity property of  $P^N$, we obtain
\begin{equation}
\begin{aligned}
\vert J_1^k&+J_2^k+J_3^k \vert
\\&\lesssim 
\Big\vert
\big\Vert
   e^{  t   A^{1/2} }P^n  P((\bm{\xi}_k \cdot\nabla )\bu^{Nn})
\big\Vert_{H^1}^2
\\&+
 \big\langle
 A^{1/2}   e^{ t   A^{1/2}  }
   P^n P[(\bm{\xi}_k \cdot \nabla) P(\bm{\xi}_k \cdot \nabla)\bu^{Nn}]
,\,   A^{1/2}   e^{ t   A^{1/2}  } \bu^{Nn} 
\big\rangle
\Big\vert
\\&
+
\frac{1}{n^2}
\Big\vert
\big\Vert
   e^{  t   A^{1/2} }P^N  P((\bm{\xi}_k \cdot\nabla )\bu^N)
\big\Vert_{H^2}^2
\\&+
\big\langle
 A   e^{ t   A^{1/2}  }
   P^N P[(\bm{\xi}_k \cdot \nabla)P (\bm{\xi}_k \cdot \nabla)\bu^N]
,\,   A   e^{ t   A^{1/2}  } \bu^{N} 
\big\rangle 
\Big\vert
\\
&\lesssim 
\Vert
   e^{ t   A^{1/2}  }
    \bm{\xi}_k \Vert_{H^r}^2
\Big(
\Vert    e^{ t   A^{1/2}  } \bu^{Nn} 
\Vert_{H^1}^2
+
\frac{1}{n^2}
\Vert    e^{ t   A^{1/2}  } \bu^{N} 
\Vert_{H^2}^2
\Big)
\end{aligned}
\end{equation}
with $r>4$.
By the Burkholder--Davis--Gundy inequality and the same argument used in \eqref{multinoiseNn}, we obtain
\begin{equation}
\begin{aligned}
\mathbb{E} &\sup_{t\in[0,\mathcal{T}^M_{Nn}]}\bigg\vert\int_0^t 
\sum_{k\geq 1}
 \big\langle
 A^{1/2}   e^{\sigma  A^{1/2}  }
   [P^NP\bg_k(\bu^N)-P^nP\bg_k(\bu^n)]
,\,   
\\&\qquad\qquad\qquad\qquad
A^{1/2}   e^{\sigma  A^{1/2}  } \bu^{Nn} 
\big\rangle \dd W^k_\sigma  \bigg\vert
\\&\lesssim 
\mathbb{E}  \bigg(\int_0^{\mathcal{T}^M_{Nn}} 
\sum_{k\geq 1}
 \big\langle
 A^{1/2}   e^{t  A^{1/2}  }
   [P^NP\bg_k(\bu^N)-P^nP\bg_k(\bu^n)]
,\,   
\\&\qquad\qquad\qquad\qquad
A^{1/2}   e^{t  A^{1/2}  } \bu^{Nn} 
\big\rangle^2 \dt  \bigg)^\frac{1}{2}
\\&\lesssim 
\mathbb{E}  \bigg(\int_0^{\mathcal{T}^M_{Nn}} 
\sum_{k\geq 1}
 \big\Vert   e^{t  A^{1/2}  }
   [P^NP\bg_k(\bu^N)-PP^n\bg_k(\bu^n)]
\big\Vert_{H^1}^2 
\\&\qquad\qquad\qquad\qquad
\times\Vert   e^{t  A^{1/2}  } \bu^{Nn} 
\big\Vert_{H^1}^2 \dt  \bigg)^\frac{1}{2}
\\&\leq
\frac{1}{4} 
\mathbb{E}  \sup_{t\in[0,\mathcal{T}^M_{Nn}]}
\Vert   e^{t  A^{1/2}  } \bu^{Nn}(t) 
\big\Vert_{H^1}^2
\\&+
c\mathbb{E}
\int_0^{\mathcal{T}^M_{Nn}}
\bigg(
\Vert   e^{t  A^{1/2}  }\bu^{Nn} \Vert_{H^1}^2
+
\frac{1}{n^2}\Big(1+
\Vert   e^{t  A^{1/2}  }\bu^n \Vert_{H^2}^2\Big)
\bigg) \dt.
\end{aligned}
\end{equation}
It now remains to estimate the last term in \eqref{diffEqnIto}. For this, we need the following preliminary estimate. Similar to \eqref{differConvec}, we rewrite 
\begin{align*} 
\big\langle
 A^{1/2}   e^{ t   A^{1/2}  }
   \big[P^N  P((\bm{\xi}_k \cdot\nabla )\bu^N)&- P^n P((\bm{\xi}_k \cdot\nabla )\bu^n) \big]
,\,   A^{1/2}   e^{ t   A^{1/2}  } \bu^{Nn} 
\big\rangle
\\&= K_1^k+K_2^k 
 \end{align*}
where
\begin{align*}
&K_1^k:= \big\langle
 A^{1/2}   e^{ t   A^{1/2}  }
    [ P^N P (\bm{\xi}_k\cdot\nabla) \bu^{Nn} 
   ]
,\,   A^{1/2}   e^{ t   A^{1/2}  } \bu^{Nn}
\big\rangle,
\\
&K_2^k:= \big\langle
 A^{1/2}   e^{ t   A^{1/2}  }
    [ (P^N-P^n) P (\bm{\xi}_k\cdot\nabla) \bu^n 
  ]
,\,   A^{1/2}   e^{ t   A^{1/2}  } \bu^{Nn} 
\big\rangle.
\end{align*}
Since $(\bm{\xi})_{k\geq1}$ satisfies \eqref{xikbounded} by assumption, it follows by using Lemma \ref{lem:h1convectiveX} that 
\begin{equation}
\begin{aligned}
\sum_{k\geq 1}\vert K_1^k \vert
&\lesssim
\Vert   e^{  t   A^{1/2}  }\bu^{Nn} \Vert_{H^1}\Vert   e^{  t   A^{1/2}  }\bu^{Nn} \Vert_{H^2}
\end{aligned}
\end{equation}
and thus, by the Burkholder--Davis--Gundy inequality,
\begin{equation}
\begin{aligned}
\label{bdgtransportNoise}
\mathbb{E} &\sup_{t\in[0,\mathcal{T}^M_{Nn}]}\bigg\vert\int_0^t 
\sum_{k\geq 1}
  K_1^k   \dd W^k_\sigma  \bigg\vert
 \lesssim
 \mathbb{E} \bigg(\int_0^{\mathcal{T}^M_{Nn}} 
\sum_{k\geq 1}
 \vert K_1^k \vert^2 \dt  \bigg)^\frac{1}{2}
\\&
\leq\frac{c \,\epsilon}{2}
\mathbb{E}  \sup_{t\in[0, \mathcal{T}^M_{Nn}] } 
\Vert   e^{ t  A^{1/2}  }\bu^{Nn} (t)\Vert_{H^1}^2 
+
\frac{c}{2\epsilon}
\mathbb{E}   \int_0^{\mathcal{T}^M_{Nn}} 
\Vert   e^{ t  A^{1/2}  }\bu^{Nn} \Vert_{H^2}^2 \dt  
\end{aligned}
\end{equation}
for some $\epsilon>0$.
On the other hand, since $P^N-P^n=Q^nP^N$  is a continuous operator \eqref{continuityProperty}, it follows from  Lemma \ref{lem:h1convectiveX}  and \eqref{xikbounded} that  
\begin{align*}
\sum_{k\geq 1}\vert K_2^k \vert 
&\lesssim
\Vert      e^{t  A^{1/2}  } \bu^{Nn} 
\Vert_{H^1}
\Vert  
     e^{t  A^{1/2}  }  \bu^n 
\Vert_{H^2}.
\end{align*}
Thus, by the Burkholder--Davis--Gundy inequality,
\begin{equation}
\begin{aligned}
\label{bdgtransportNoise2}
\mathbb{E} &\sup_{t\in[0,\mathcal{T}^M_{Nn}]}\bigg\vert\int_0^t 
\sum_{k\geq 1}
  K_2^k   \dd W^k_\sigma  \bigg\vert
\\&
 \lesssim
\mathbb{E}  \int_0^{\mathcal{T}^M_{Nn}} 
\Vert   e^{t  A^{1/2}  }\bu^{Nn}  \Vert_{H^1}^2\big(1+\Vert   e^{t  A^{1/2}  }\bu^n \Vert_{H^2}^2\big) \dt.
\end{aligned}
\end{equation}
From \eqref{bdgtransportNoise} and \eqref{bdgtransportNoise2}, it follows   that
\begin{equation}
\begin{aligned}
\label{bdgtransportNoise3}
\mathbb{E} &\sup_{t\in[0,\mathcal{T}^M_{Nn}]}\bigg\vert\int_0^t 
\sum_{k\geq 1}
 \big\langle
 A^{1/2}   e^{\sigma  A^{1/2}  }
   \big[P^N  P((\bm{\xi}_k \cdot\nabla )\bu^N)
   \\&- P^n P((\bm{\xi}_k \cdot\nabla )\bu^n) \big]
,\,   A^{1/2}   e^{\sigma  A^{1/2}  } \bu^{Nn} 
\big\rangle \dd W^k_\sigma  \bigg\vert
\\&
\leq\frac{c \,\epsilon}{2}
\mathbb{E}  \sup_{t\in[0, \mathcal{T}^M_{Nn}] } 
\Vert   e^{t  A^{1/2}  }\bu^{Nn}(t) \Vert_{H^1}^2 
+
\frac{c}{2\epsilon}
\mathbb{E}   \int_0^{\mathcal{T}^M_{Nn}} 
\Vert   e^{ t  A^{1/2}  }\bu^{Nn} \Vert_{H^2}^2 \dt 
\\&+
c(\nu)
\mathbb{E}   \int_0^{\mathcal{T}^M_{Nn}} 
\Vert   e^{t  A^{1/2}  }\bu^{Nn} \Vert_{H^1}^2\big(1+\Vert   e^{t A^{1/2}  }\bu^n \Vert_{H^2}^2 \big) \dt.
\end{aligned}
\end{equation}
By collecting the various estimates of \eqref{diffEqnIto}, we obtain
\begin{equation}
\begin{aligned}
\label{finalDiffEst}
\kappa_1&
\mathbb{E}  \sup_{t\in[0, \mathcal{T}^M_{Nn}] } 
\Vert   e^{ t  A^{1/2}  }\bu^{Nn}(t) \Vert_{H^1}^2  
+
\kappa_2
\mathbb{E}
\int_0^{\mathcal{T}^M_{Nn}} \Vert e^{t  A^{1/2}  } \bu^{Nn} \Vert^2_{H^2}  \dt
\leq
\frac{1}{2}
\mathbb{E} \Vert    
\bu^{Nn}_0 \Vert^2_{H^1} 
\\
&+ c(\nu)\mathbb{E}
\int_0^{\mathcal{T}^M_{Nn}}
\Vert  e^{t A^{1/2}} \bu^{Nn} \Vert_{H^1}^2 
\Big(1
+
\Vert  e^{t A^{1/2}} \bu^N \Vert_{H^1}^4 
+
\Vert  e^{t A^{1/2}} \bu^n \Vert_{H^2}^2 
\Big) \dt
\\
&+ c(\nu)\mathbb{E}
\int_0^{\mathcal{T}^M_{Nn}}
\frac{1}{n}\Big(
1+
\Vert  e^{t A^{1/2}} \bu^n \Vert_{H^2}^2 
+
\Vert  e^{t A^{1/2}} \bu^n \Vert_{H^1}^2
\Vert  e^{t A^{1/2}} \bu^n \Vert_{H^2}^2 
\Big) \dt
 \end{aligned}
\end{equation}
where $\kappa_1:=\frac{1}{4}-\frac{c\epsilon}{2}$ and $\kappa_2=\frac{\nu}{2} 
-
\frac{c}{2\epsilon}$. 
We can now choose $\epsilon=\epsilon(\nu)\in(c/\nu, 1/2c)$ (recall that $c=c(\nu)$) appropriately so that both $\kappa_1$ and $\kappa_2$ are strictly positive. Since all constants are independent of $n$ or $N$, our required convergence \eqref{expecZeroX} follows from Gr\"onwall's lemma \cite[Lemma 5.3]{glatt2009strong}.
\end{proof}
\noindent With Lemma \ref{lem:criteria} in hand, our next goal is to apply Lemma \ref{lem:comparison} to get our final result that leads to the proof of Theorem \ref{thm:main}.

\begin{proposition}
\label{prop:local}
Let $(\bg_k)_{k\geq1}$ satisfy \eqref{noiseGrowth}--\eqref{noiseMeanfree} with  $r=s=1$  and $\sigma_1=t $ and let $(\bm{\xi}_k)_{k\geq1}$ satisfy \eqref{xikbounded}  with $r>4$, $s=1$ and $\sigma_2=T$.
If  there exists a deterministic $K_0>0$ such that \eqref{initialCondBounded} holds,
then $\bu(t \wedge \tau )\in  C((0,\tau ); D(e^{t  A^{1/2s}} : \mathbb{H}^r(\mathbb{T}^d)))$ a.s.
for all $t\in (0, T]$.
\end{proposition}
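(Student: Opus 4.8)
The plan is to read Proposition \ref{prop:local} as nothing more than the verification of the single outstanding ingredient in Definition \ref{def:GevreySol}, item (2), and to obtain it by feeding the two estimates of Lemma \ref{lem:criteria} into the abstract comparison principle of Lemma \ref{lem:comparison}. First I would specialise Lemma \ref{lem:comparison} to the parameters dictated by Theorem \ref{thm:main}, namely $s=1$, $r=1$, $\nu>0$, and the weight $\varphi(t)=t$, which is nondecreasing and hence of bounded variation on $[0,T]$, so that the hypotheses of Lemma \ref{lem:comparison} are admissible. With these choices the identities $\Vert A^{1/2}\bff\Vert_{L^2}^2=\Vert \bff\Vert_{H^1}^2$ and $\Vert A\bff\Vert_{L^2}^2=\Vert \bff\Vert_{H^2}^2$ turn the energy norm $\Vert\cdot\Vert_{\mathcal{E}(\tau)}$ into exactly
\begin{align*}
\Vert\bu^N\Vert_{\mathcal{E}(\tau)}^2=\sup_{\sigma\in[0,\tau]}\Vert e^{\sigma A^{1/2}}\bu^N(\sigma)\Vert_{H^1}^2+\nu\int_0^{\tau}\Vert e^{\sigma A^{1/2}}\bu^N\Vert_{H^2}^2\,\ds,
\end{align*}
and, since $\varphi(0)=0$ gives $e^{\varphi(0)A^{1/2}}=I$, the stopping time $\mathcal{T}^M_N$ of Lemma \ref{lem:comparison} coincides with the one in Lemma \ref{lem:criteria} up to the elementary reconciliation of the squared and unsquared thresholds (replacing $M$ by a suitable power is harmless, since $a\le\sqrt{a^2+M}\le a+\sqrt{M}$).

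Granting this identification, the probabilistic hypothesis \eqref{probZero} and the Cauchy estimate of Lemma \ref{lem:comparison} are precisely the two limits \eqref{probZeroX} and \eqref{expecZeroX} established in Lemma \ref{lem:criteria}. Invoking Lemma \ref{lem:comparison} therefore furnishes an a.s.\ strictly positive stopping time $\tau$ with $\mathbb{P}(0<\tau\le T)=1$, a limit process $\bu=\bu(\cdot\wedge\tau)\in\mathcal{E}(\tau)$, and a subsequence of the Galerkin approximations (not relabelled) for which $\Vert\bu^N-\bu\Vert_{\mathcal{E}(\tau)}\to0$ almost surely.

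It then remains to translate membership in $\mathcal{E}(\tau)$ into the claimed Gevrey continuity and to identify the limit correctly. For $r=s=1$ and $\varphi(t)=t$, the inclusion $\bu(\cdot\wedge\tau)\in\mathcal{E}(\tau)$ unpacks to $\bu(\cdot\wedge\tau)\in C([0,T];D(e^{tA^{1/2}}:\mathbb{H}^1))=C([0,T];G^{1,1}_{t}(\mathbb{T}^d))$, which is exactly the continuity sought and in particular gives $\bu(t\wedge\tau)\in C((0,T];G^{1,1}_{t}(\mathbb{T}^d))$ a.s.; the degeneration of the weight to $e^{0}=I$ at $t=0$ is consistent with $\bu_0$ being merely of bounded enstrophy. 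To see that this limit \emph{is} the maximal strong pathwise solution supplied by Theorem \ref{thm:globStrong}, I would note that almost-sure convergence in $\mathcal{E}(\tau)$ implies convergence in the weaker topology $C([0,T];\mathbb{H}^1)\cap L^2(0,T;\mathbb{H}^2)$ in which the same Galerkin scheme is already known to converge to that maximal solution; uniqueness of limits then identifies $\bu$ with its restriction to $[0,\tau)$. Hence the maximal strong pathwise solution inherits the Gevrey regularity, which is item (2) of Definition \ref{def:GevreySol} and, by Remark \ref{rem:thm:main}, completes Theorem \ref{thm:main}.

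The substantive analysis has already been discharged in Lemma \ref{lem:criteria}, so for the proposition itself the only genuine point of care is this last identification: I must ensure I am \emph{upgrading} the regularity of the already-constructed maximal solution rather than producing a second, a priori distinct, process. The remaining matching of norms, thresholds, and the admissibility of the time-dependent weight $\varphi(t)=t$ is mechanical.
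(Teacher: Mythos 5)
Your proposal is correct and follows essentially the same route as the paper: verify that the two limits \eqref{probZeroX} and \eqref{expecZeroX} from Lemma \ref{lem:criteria} supply the hypotheses of Lemma \ref{lem:comparison} with $r=s=1$ and $\varphi(t)=t$, and read off the Gevrey continuity from membership of the limit in $\mathcal{E}(\tau)$. The extra care you take over identifying the limit with the maximal strong pathwise solution is sound but is deferred by the paper to the separate ``Identification of the limit'' subsection rather than being part of this proposition's proof.
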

\begin{proof}
We note that the projection of the initial condition satisfying $\Vert   \bu_0 \Vert_{H^1} \leq K_0$ a.s. will also satisfies $\Vert   \bu^N_0 \Vert_{H^1} \leq K_0$  a.s. Now take the associated family $\{\bu^N\}_{N\in \mathbb{N}}$ family of Galerkin solution. Due to Lemma \ref{lem:criteria}, we see that the assumptions of Lemma \ref{lem:comparison} are satisfied and as such, there exists a pair $(\bu,\tau)$ with $\tau>0$ a.s. and
\begin{align*}
\bu(\cdot)=\bu(\cdot \wedge \tau) \in C\big([0,\tau );D(e^{ \cdot   A^{1/2}} : \mathbb{H}^1(\mathbb{T}^d))\big) \cap L^2\big((0,\tau );D(e^{ \cdot   A^{1/2}} : \mathbb{H}^{2}(\mathbb{T}^d))\big)
\end{align*}
such that up to taking a subsequence (not relabelled)
\begin{align*}
\sup_{\sigma\in[0,{  \tau} ]}\Vert   e^{\sigma A^{1/2}} [\bu^N(\sigma) -\bu(\sigma)]\Vert^2_{H^1} 
&+
\nu
\int_0^{ \tau}
\Vert   e^{\sigma A^{1/2}}
 [\bu^N-\bu]
\Vert_{H^{2}}^2 \ds \rightarrow 0 \quad \text{a.s.}
\end{align*}
\end{proof}

\subsection{Identification of the limit}  
Note that in the definition of a Gevrey class solution, Definition \ref{def:GevreySol}, identification of the limit system has nothing to do with the Gevrey class regularity. Indeed, the limit equation is identified at the level of construction of a maximal strong pathwise solution $(\bu, \tau_m)$. We refer to \cite{glatt2009strong,  mikulevicius2009strong, mikulevicius2004stochastic} for  this procedure. Due to Theorem \ref{thm:globStrong}, it follows that $\tau$ from Proposition \ref{prop:local} is such that  $\tau\leq \tau_m$ a.s. since the latter is maximal. Nevertheless, similar to \cite[Section 4.2]{glatt2009strong}, we can use a contradiction argument to extend $\tau$ to a maximal stopping time $\tau_{\max}\leq \tau_m$ whose announcing times are given by an increasing sequence of stopping times $(\tau_l)_{l\geq1}$. However,{ it is an open question if $\tau_{\max}=\infty$ a.s. in 2D (but certainly not expected in 3D) and one main drawback to proving this result lies in the fact that when working in the Gevrey class,
\begin{align*}
\big\langle A^{1/2} e^{t A^{1/2}} P(\bu\cdot\nabla)\bu\,,\, A^{1/2} e^{t A^{1/2}}\bu \big\rangle \neq 0
\end{align*}
unlike the cancellation property \cite[Lemma 3.1]{temam1995navier}
\begin{align*}
\big\langle  A^{1/2}P(\bu\cdot\nabla)\bu\,,\, A^{1/2}\bu \big\rangle = 0.
\end{align*}
enjoyed when working in the usual Sobolev space defined on $\mathbb{T}^2$.
}

\section{Decay Rate}
\label{sec:decay}
\noindent In the section, we show  Theorem \ref{thm:decay} which gives the decay rate of the Fourier modes of the velocity field solving the Navier--Stokes equation under the assumption that $\tau=\infty$ a.s. To begin with, we recall $\bu$ and its finite-dimensional approximation $\bu^N$ that solves
\begin{equation}
\begin{aligned}
\label{bu}
&\dd \bu  +[ P ((\bu\cdot\nabla) \bu) + \nu  A \bu   ]\dt = \frac{1}{2} \sum_{k\geq1} P[(\bm{\xi}_k \cdot\nabla)P(\bm{\xi}_k \cdot\nabla )\bu] \dt
\\&\qquad\qquad\qquad\qquad\qquad\quad
+
    \sum_{k\geq1}P\big[\bg_k(\bu)-((\bm{\xi}_k \cdot\nabla )\bu)\big] \dd W_t^k,
\\
&\bu(\bx,0)= \bu_0(\bx)
\end{aligned}
\end{equation}
and
\begin{equation}
\begin{aligned}
\label{buN}
    \dd \bu^N  &+[P^N P ((\bu^N\cdot\nabla) \bu^N) + \nu  A \bu^N   ]\dt     
    \\&= \frac{1}{2} \sum_{k\geq1}P^N P[(\bm{\xi}_k \cdot\nabla ) P(\bm{\xi}_k \cdot\nabla )\bu^N] \dt
    \\&
    +
    \sum_{k\geq1}P^NP\big[\bg_k(\bu^N)-((\bm{\xi}_k \cdot\nabla )\bu^N)\big] \dd W_t^k,
    \\
    &\bu^N_0(\bx)= P^N\bu_0(\bx)
\end{aligned}
\end{equation}
respectively. Having shown the Gevrey class $1$ regularity of $\bu$ { and with the assumption $\tau=\infty$ in hand}, we wish to use this information to obtain the following exponential decay rate for the difference of $\bu$ and $\bu^N$ in the   $H^1$-norm (i.e. in the class of strong solutions).
Before we begin, we note that from Theorem \ref{thm:main}, we have that the estimate
\begin{align*}
\Vert \bu(t) \Vert_{G^1}^2
&=
 \sum_{\bk\in  \mathbb{Z}^d}  \vert \bk\vert^{2}e^{2t \vert \bk\vert}
  \vert\hat{\bu}_{\bk}(t)\vert^2
\lesssim 1  
\end{align*}
holds a.s. for all $t\in(0,T]$
with a constant depending only on $\nu$, 
 $\bu_0$ and $K$ from \eqref{xikbounded}. Thus, there exists $\delta_1\in(0,T)$ such that
\begin{align}
\label{eq:betterThanExp}
 &\vert\hat{\bu}_{\bk}(t)\vert^2
 \lesssim
    \frac{1}{\vert \bk\vert^2} e^{-2\delta_1 \vert \bk\vert},
  \qquad \qquad
  \vert \bk\vert^2\vert\hat{\bu}_{\bk}(t)\vert^2
  \lesssim
  e^{-2\delta_1 \vert \bk\vert}
\end{align}
holds a.s. for $t\in[\delta_1,T]$.  The first bound means that each Fourier coefficient $\hat{\bu}_{\bk}$ decays better-than-exponential, with respect to its wavenumber $\frac{1}{\vert \bk\vert}$, { uniformly with respect to $t\in [\delta_1,T]$}, uniformly with respect to the initial enstrophy $ \Vert \bu_0\Vert_{H^1}^2$ and uniformly with respect to the noise coefficients
$(\bg_k,\bm{\xi}_k)_{k\geq1}$. With this preliminary information in hand, we can now show Theorem \ref{thm:decay}. First of all, note that 
\begin{align}
\label{splitEst}
\Vert (\bu^N - \bu)(t) \Vert_{H^1} 
\leq
\Vert Q^N \bu(t)\Vert_{H^1} 
+
\Vert (P^N\bu - \bu^N)(t)\Vert_{H^1} 
\end{align}
where
\begin{align*}
\Vert Q^N \bu(t)\Vert_{H^1}^2 
=
\Vert (I-P^N) \bu(t)\Vert_{ H^1}^2 
=
\sum_{\vert \bk\vert  > N}    \vert \bk\vert^{2} \vert \hat{\bu}_{\bk}(t) \vert^2
\end{align*}
and so, the estimate
\begin{align}
\label{qNEst}
\Vert Q^N \bu(t)\Vert_{H^1}^2 
\lesssim
   e^{-2\delta_1 N}
\end{align}
holds a.s. for $t\in[\delta_1,T]$.
In order to obtain our desired decay, therefore, it remains to estimate $\Vert      P^N  \bu-\bu^N \Vert^2_{H^1}$.
For this, we first note that
\begin{align}
\label{solenoidalDifference}
\frac{1}{2} \Vert      P^N  \bu-\bu^N \Vert^2_{H^1}
= 
 \frac{1}{2} \Vert      \bu^N  \Vert^2_{H^1}
 +
 \frac{1}{2} \Vert      P^N  \bu  \Vert^2_{H^1}
- \big\langle  A^{1/2}P^N\bu \,,\, A^{1/2}\bu^N \big\rangle 
\end{align}
and as such, we can compute the right-hand side using It\^o's formula. Firstly, by applying It\^o's formula to the mapping $t\mapsto \frac{1}{2} \Vert A^{1/2}\bu^N(t) \Vert_{L^2}^2$, we obtain from \eqref{buN},
\begin{equation}
\begin{aligned}
\frac{1}{2}& \Vert  \bu^N(t) \Vert^2_{H^1} 
+
\nu \int_0^t \Vert 
    A  \bu^N \Vert_{L^2}^2 \ds
\\&= 
\frac{1}{2} \Vert       \bu^N_0 \Vert^2_{H^1} 
-
\int_0^t \big\langle
 A^{1/2}    
    P^NP [(\bu^N\cdot\nabla) \bu^N ]
\,,\,   A^{1/2}     \bu^N 
\big\rangle \ds  
\\
&+
\frac{1}{2}\int_0^t\sum_{k\geq 1}
 \big\langle
 A^{1/2}    
   P^N P[(\bm{\xi}_k \cdot \nabla)P(\bm{\xi}_k \cdot \nabla)\bu^N]
\,,\,   A^{1/2}     \bu^N 
\big\rangle \ds 
\\
&+
\frac{1}{2}\int_0^t
\sum_{k\geq 1}
\big\Vert    
   P^NP[\bg_k(\bu^N)
   -
   ((\bm{\xi}_k \cdot \nabla)\bu^N)]
\big\Vert_{H^1}^2 \ds
\\
&+\int_0^t\sum_{k\geq 1}
 \big\langle
 A^{1/2}    
   P^NP[\bg_k(\bu^N)
   -
   ((\bm{\xi}_k \cdot \nabla)\bu^N)]
\,,\,   A^{1/2}     \bu^N 
\big\rangle \dd W^k_\sigma.
 \end{aligned}
\end{equation}
Similarly, by applying It\^o's formula to the mapping $t\mapsto \frac{1}{2} \Vert A^{1/2}P^N  \bu (t)\Vert_{L^2}^2$, we obtain from \eqref{bu}
\begin{equation}
\begin{aligned}
\frac{1}{2}& \Vert      P^N  \bu(t) \Vert^2_{H^1} 
+
\nu \int_0^t \Vert 
    A  P^N  \bu
\Vert_{L^2}^2 \ds
\\&= 
\frac{1}{2} \Vert     P^N  \bu_0 \Vert^2_{H^1} 
-
\int_0^t \big\langle
 A^{1/2}    
    P^NP [(\bu\cdot\nabla) \bu ]
\,,\,   A^{1/2}    P^N  \bu 
\big\rangle \ds
\\
&+
\frac{1}{2}\int_0^t\sum_{k\geq 1}
 \big\langle
 A^{1/2}    
   P^N P[(\bm{\xi}_k \cdot \nabla) P(\bm{\xi}_k \cdot \nabla)\bu]
\,,\,  A^{1/2}    P^N  \bu
\big\rangle \ds
\\
&+
\frac{1}{2}\int_0^t
\sum_{k\geq 1}
\big\Vert    
   P^NP[\bg_k(\bu )
   -
   ((\bm{\xi}_k \cdot \nabla)\bu )]
\big\Vert_{H^1}^2 \ds
\\
&+\int_0^t\sum_{k\geq 1}
 \big\langle
 A^{1/2}    
   P^NP[\bg_k(\bu) - ((\bm{\xi}_k \cdot \nabla)\bu)]
\,,\,  A^{1/2}    P^N  \bu 
\big\rangle \dd W^k_\sigma.
 \end{aligned}
\end{equation}
Next, we use It\^o's product rule to obtain
{ 
\begin{equation}
\begin{aligned}
\big\langle &
A^{1/2}    P^N\bu \,,\, A^{1/2}\bu^N
\big\rangle  
\\&=
\big\langle 
A^{1/2}    P^N\bu_0 \,,\, A^{1/2}\bu^N_0
\big\rangle  
 -
 2\nu \int_0^t
\big\langle
A    \bu^N
\,,\, A     P^N \bu
\big\rangle \ds
\\&
-\int_0^t
\big\langle 
A^{1/2}    P^N P[(\bu\cdot \nabla)\bu]
\,,\, A^{1/2}\bu^N
\big\rangle \ds 
\\
&-\int_0^t
\big\langle
A^{1/2} P^N P [(\bu^N\cdot\nabla) \bu^N]
\,,\, A^{1/2}    P^N \bu
 \big\rangle \ds
\\&+\mathcal{C}(\mathbf{g}_k,\bm{\xi}_k,\bu,\bu^N)(t)
\\&+
\int_0^t
\sum_{k\geq1}
\big\langle
A^{1/2} P^NP\big[\bg_k(\bu^N)- ((\bm{\xi}_k \cdot\nabla )\bu^N) \big] 
\,,\, A^{1/2}    P^N\bu
\big\rangle \dd W_\sigma^k
\\&+
\int_0^t
\sum_{k\geq1}
\big\langle
A^{1/2}    P^NP[\bg_k(\bu) 
- ((\bm{\xi}_k\cdot \nabla)\bu)] 
\,,\, A^{1/2}\bu^N
\big\rangle \dd W_\sigma^k.
\end{aligned}
\end{equation} 
where
\begin{align*}
\mathcal{C}(\mathbf{g}_k,\bm{\xi}_k,\bu,\bu^N)(t)&:=
 \frac{1}{2}
\int_0^t\sum_{k\geq 1}
\big\langle
A^{1/2}    P^N P [(\bm{\xi}_k\cdot\nabla) P(\bm{\xi}_k\cdot\nabla) \bu]
\,,\,  A^{1/2}\bu^N
\big\rangle \ds 
\\
&+
\frac{1}{2}
\int_0^t\sum_{k\geq 1}
\big\langle
A^{1/2} P^N P [(\bm{\xi}_k\cdot\nabla) P (\bm{\xi}_k\cdot\nabla) \bu^N]
\,,\, A^{1/2}    P^N \bu
\big\rangle \ds
\\&+
\int_0^t
\sum_{k\geq1}
\big\langle
A^{1/2}    P^N P[\bg_k(\bu) 
- ((\bm{\xi}_k\cdot \nabla)\bu)] 
\,,\, 
\\&\qquad\qquad
A^{1/2} P^NP\big[\bg_k(\bu^N)- ((\bm{\xi}_k \cdot\nabla )\bu^N) \big] 
\big\rangle \ds.
\end{align*}  
}
Now notice that
\begin{align*}
2\nu
\big\langle
A    \bu^N
\,,\, A     P^N \bu
\big\rangle
- \nu
\Vert 
    A  P^N  \bu
\Vert_{L^2}^2
-
\nu
\Vert 
    A\bu^N
\Vert_{L^2}^2
=
-
\nu\Vert A(P^N\bu - \bu^N)\Vert_{L^2}^2.
\end{align*}
Next, by using the fact that $A^{1/2}$ is self-adjoint, we can verify that  
\begin{align*}
-&
\big\langle
 A^{1/2}    
    P^NP [(\bu^N\cdot\nabla) \bu^N]
\,,\,   A^{1/2}     \bu^N 
\big\rangle   
-
\big\langle
 A^{1/2}    
    P^NP [(\bu\cdot\nabla) \bu]
\,,\,   A^{1/2}    P^N  \bu 
\big\rangle  
\\&
+ 
\big\langle 
A^{1/2}    P^N P[(\bu\cdot \nabla)\bu]
\,,\, A^{1/2}\bu^N
\big\rangle  
+ 
\big\langle
A^{1/2} P^N P [(\bu^N\cdot\nabla) \bu^N]
\,,\, A^{1/2}    P^N \bu
 \big\rangle
 \\
&=
\big\langle
    P^NP [ ((\bu^N- P^N  \bu)\cdot\nabla) \bu^N]
\,,\,   A     (P^N  \bu - \bu^N)
\big\rangle 
 \\
&-
\big\langle 
  P^N P[((\bu -P^N \bu)\cdot \nabla)\bu^N  ]
\,,\, A  (P^N \bu -\bu^N)
\big\rangle 
 \\
&+
\big\langle
    P^NP[ (  \bu \cdot\nabla) (\bu^N-\bu) ]
\,,\,   A     (P^N  \bu - \bu^N)
\big\rangle 
\\&
=:I_1+I_2+I_3
 \end{align*} 
where by the continuity property of $P^N$,
\begin{equation}
\begin{aligned}
\vert I_1\vert
&\leq 
\Vert A     (P^N  \bu - \bu^N)\Vert_{L^2}
\Vert  P^N  \bu - \bu^N \Vert_{L^4}
\Vert \nabla\bu^N\Vert_{L^4}
\\
&\leq 
\frac{\nu}{6}
\Vert A     (P^N  \bu - \bu^N)\Vert_{L^2}^2
+
c(\nu)
\Vert  P^N  \bu - \bu^N \Vert_{H^1}^2
\Vert \bu^N\Vert_{H^2}^2
\end{aligned}
\end{equation}
and for $Q^N=I-P^N$,
\begin{equation}
\begin{aligned}
\vert I_2\vert
&\leq 
\Vert A     (P^N  \bu - \bu^N)\Vert_{L^2}
\Vert  Q^N\bu  \Vert_{L^4}
\Vert \nabla\bu^N\Vert_{L^4}
\\
&\leq 
\frac{\nu}{6}
\Vert A     (P^N  \bu - \bu^N)\Vert_{L^2}^2
+
c(\nu)
\Vert   Q^N\bu \Vert_{H^1}^2
\Vert \bu^N\Vert_{H^2}^2
\end{aligned}
\end{equation}
and
\begin{equation}
\begin{aligned}
\vert I_3\vert
&\leq 
\Vert A     (P^N  \bu - \bu^N)\Vert_{L^2}
\Vert  \bu  \Vert_{L^\infty}
\Vert \nabla(\bu^N-\bu)\Vert_{L^2}
\\
&\leq 
\frac{\nu}{6}
\Vert A     (P^N  \bu - \bu^N)\Vert_{L^2}^2
+
c(\nu)
\Vert   P^N\bu -\bu^N \Vert_{H^1}^2
\Vert \bu\Vert_{H^2}^2
\\&\quad+
c(\nu)
\Vert   Q^N\bu \Vert_{H^1}^2
\Vert \bu\Vert_{H^2}^2.
\end{aligned}
\end{equation}
Next, we also note that
\begin{equation}
\begin{aligned}
\label{vastlysimplifiedbiadvectiondiff}
\frac{1}{2} &\Big\{
 \big\langle
 A^{1/2}    
   P^N P[(\bm{\xi}_k \cdot \nabla)P(\bm{\xi}_k \cdot \nabla)\bu^N]
\,,\,   A^{1/2}     \bu^N 
\big\rangle
\\&+
 \big\langle
 A^{1/2}    
   P^N P[(\bm{\xi}_k \cdot \nabla) P(\bm{\xi}_k \cdot \nabla)\bu]
\,,\,  A^{1/2}    P^N  \bu
\big\rangle
\\& 
-  
\big\langle
A^{1/2}    P^N P [(\bm{\xi}_k\cdot\nabla ) P(\bm{\xi}_k\cdot\nabla) \bu]
\,,\,  A^{1/2}\bu^N
\big\rangle
\\&-
\big\langle
A^{1/2} P^N P [(\bm{\xi}_k\cdot\nabla) P(\bm{\xi}_k\cdot\nabla) \bu^N]
\,,\, A^{1/2}    P^N \bu
\big\rangle
\Big\}
\\&
=
\frac{1}{2} 
\Big\{
\big\langle
A^{1/2} P^N P [(\bm{\xi}_k\cdot\nabla)P (\bm{\xi}_k\cdot\nabla) (\bu^N -\bu)]
\,,\, A^{1/2}    (\bu^N - \bu)
\big\rangle
\\&
-
\big\langle
A^{1/2} P^N P [(\bm{\xi}_k\cdot\nabla)P (\bm{\xi}_k\cdot\nabla) (\bu^N -\bu)]
\,,\, A^{1/2}    (P^N \bu - \bu)
\big\rangle
\Big\}
\\&
=
\frac{1}{2} 
\big\langle
A^{1/2} P^N P [(\bm{\xi}_k\cdot\nabla)P (\bm{\xi}_k\cdot\nabla) (\bu^N -\bu)]
\,,\, A^{1/2}    (\bu^N - \bu)
\big\rangle
\end{aligned}
\end{equation}
where we have used in the last equation, the identity $\langle P^N\bx,Q^N\by \rangle=0$ which holds for the orthogonal projection $P^N$ and its complement $Q^N=I-P^N$.
Moving on, we estimate the following three terms as follows
\begin{equation}
\begin{aligned}
\label{splitnoisedifference}
&
\frac{1}{2}\big\Vert    
   P^NP[\bg_k(\bu^N)
   -
   ((\bm{\xi}_k \cdot \nabla)\bu^N)]
\big\Vert_{H^1}^2 
+
\frac{1}{2}
\big\Vert    
   P^NP[\bg_k(\bu )
   -
   ((\bm{\xi}_k \cdot \nabla)\bu )]
\big\Vert_{H^1}^2
\\&-
\big\langle
A^{1/2}    P^N P[\bg_k(\bu) 
- ((\bm{\xi}_k\cdot \nabla)\bu)] 
\,,\, A^{1/2} P^NP\big[\bg_k(\bu^N)- ((\bm{\xi}_k \cdot\nabla )\bu^N) \big] 
\big\rangle
\\&=
\frac{1}{2}
\big\Vert    
   P^NP[\bg_k(\bu^N)
   -
   ((\bm{\xi}_k \cdot \nabla)\bu^N)]
-   
   P^NP[\bg_k(\bu )
   -
   ((\bm{\xi}_k \cdot \nabla)\bu )]
\big\Vert_{H^1}^2
\\&\leq
\big\Vert    
   P^NP[\bg_k(\bu^N)
   -
   \bg_k(\bu )]
\big\Vert_{H^1}^2
+
\big\Vert  
   P^N P[
   (\bm{\xi}_k \cdot \nabla)(\bu^N
   -
    \bu )]
\big\Vert_{H^1}^2
\end{aligned}
\end{equation}
where by the continuity of $P^NP$ and the Lipschitz property of $(\bg_k)_{k\geq1}$, it follows that
\begin{align*}
\sum_{k\geq1}
\big\Vert    
   P^NP[\bg_k(\bu^N)
   -
   \bg_k(\bu )]
\big\Vert_{H^1}^2
&\lesssim
 \Vert    
    \bu^N 
   -
    \bu  
 \Vert_{H^1}^2
\\& \lesssim
 \Vert    
    P^N\bu
   -
    \bu^N 
 \Vert_{H^1}^2
 +
\Vert    
    Q^N\bu  
 \Vert_{H^1}^2
\end{align*}
holds uniformly in $N\in \mathbb{N}$.
On the other hand, by using Lemma \ref{cor:xiAppendix} and the assumption on the transport noise in Section \ref{sec:assumptions}, we can combine \eqref{vastlysimplifiedbiadvectiondiff} with the second right-hand term in \eqref{splitnoisedifference} to obtain
\begin{equation}
\begin{aligned}
\bigg\vert 
\sum_{k\geq1}&\big\{\big\Vert  
   P^N P[
   (\bm{\xi}_k \cdot \nabla)(\bu^N
   -
    \bu )]
\big\Vert_{H^1}^2
\\&+ 
\big\langle
A^{1/2} P^N P [(\bm{\xi}_k\cdot\nabla) P(\bm{\xi}_k\cdot\nabla) (\bu^N -\bu)]
\,,\, A^{1/2}    (\bu^N - \bu)
\big\rangle\big\}
 \bigg\vert
 \\&\lesssim
   \Vert
   \bu^N-\bu 
\Vert_{H^1}^2
\lesssim
\Vert    
    P^N\bu
   -
    \bu^N
 \Vert_{H^1}^2
 +
\Vert    
    Q^N\bu  
 \Vert_{H^1}^2
\end{aligned}
\end{equation}
uniformly in $N\in \mathbb{N}$.
If we now recall \eqref{solenoidalDifference}, then it follows from the above (in)equations that
\begin{equation}
\begin{aligned}
\label{solenoidalDifference1}
\frac{1}{2} \Vert      (P^N  \bu-\bu^N)(t) \Vert^2_{H^1}
&+
\frac{\nu}{2}
\int_0^t\Vert A(P^N\bu - \bu^N)\Vert_{L^2}^2\ds
\\&\leq
\frac{1}{2} \Vert      P^N  \bu_0 -\bu^N_0 \Vert^2_{H^1}
\\&+
\int_0^t \mathcal{M}(P^N\bu \,\big\vert\, \bu^N) \dd W^k_\sigma
\\&+
c(\nu)
\int_0^t \mathcal{R}(P^N\bu \,\big\vert\, \bu^N) \ds
\end{aligned}
\end{equation}
where
\begin{equation}
\begin{aligned}
\label{realMartingale}
\mathcal{M}(P^N\bu \,\big\vert\, \bu^N)
&:=
\sum_{k\geq 1}
 \big\langle
 A^{1/2}    
   P^NP[\bg_k(\bu) - ((\bm{\xi}_k \cdot \nabla)\bu)]
\,,\,  A^{1/2}    (P^N  \bu- \bu^N ) 
\big\rangle 
\\&
-
\sum_{k\geq1}
\big\langle
A^{1/2} P^NP\big[\bg_k(\bu^N)- ((\bm{\xi}_k \cdot\nabla )\bu^N) \big] 
\,,\, A^{1/2}    (P^N  \bu- \bu^N )
\big\rangle  
\\&=
\sum_{k\geq 1}
\big\langle
 A^{1/2}    
   P((\bm{\xi}_k \cdot \nabla)(\bu^N-\bu))
\,,\,   A^{1/2}     (P^N  \bu-\bu^N) 
\big\rangle 
\\&
-
\sum_{k\geq1}
\big\langle
A^{1/2} P[ \bg_k(\bu^N)- \bg_k(\bu)]
\,,\, A^{1/2}    (P^N  \bu-\bu^N)
\big\rangle 
\end{aligned}
\end{equation}
is such that $\int_0^t\mathcal{M}(\cdot)\dd W_\sigma^k$ is a family of real-valued centred martingales and 
\begin{equation}
\begin{aligned}
\label{residualTerm}
\mathcal{R}(P^N\bu \,\big\vert\, \bu^N)
&:=
\Big(\Vert  P^N  \bu - \bu^N \Vert_{H^1}^2
+
\Vert    
    Q^N\bu  
 \Vert_{H^1}^2
 \Big)
 \\&\times
 \Big(1+\Vert \bu^N\Vert_{H^2}^2+\Vert \bu\Vert_{H^2}^2\Big)
\end{aligned}
\end{equation}
is the remainder term. Note from \eqref{bu} and \eqref{buN} that
\begin{align}
\Vert      P^N  \bu_0 -\bu^N_0 \Vert^2_{H^1}
=0.
\end{align}
In order to get rid of the martingale term in \eqref{solenoidalDifference1}, we are going to take the expectation of \eqref{solenoidalDifference1}. However, in order to also handle the nonlinear interactions due to the fluid's advection, we also introduce the a.s. strictly positive stopping time $\tau_R$  defined as 
\begin{align*}
\tau_R:=\inf \bigg\{t\in(0,T)\,:\, \int_0^t \big(\Vert\bu^N(\sigma)\Vert_{H^2}^2 + \Vert\bu(\sigma)\Vert_{H^2}^2 \big)\ds\geq R\bigg\}
\end{align*}
for some $R>0$. { Note that $\mathbb{P}[\tau_R\leq0]=0$.}
Since the martingale term is centred, it follows that
\begin{equation}
\begin{aligned}
\mathbb{E} \int_0^{t\wedge   \tau_R} \mathcal{M}(P^N\bu \,\big\vert\, \bu^N) \dd W^k_\sigma  
=
0.
\end{aligned}
\end{equation}
By collecting the various estimates above and adding the resulting inequality to \eqref{qNEst}, we obtain
\begin{equation}
\begin{aligned}
\label{solenoidalDifference1a}
\mathbb{E}&  \Big(
\Vert  (P^N  \bu-\bu^N)(t\wedge   \tau_R) 
\big\Vert_{H^1}^2
+
\Vert ( Q^N  \bu) (t\wedge   \tau_R) 
\big\Vert_{H^1}^2
\Big)
\\&+
\nu
\int_0^{t\wedge   \tau_R}\Vert A(P^N\bu - \bu^N)\Vert_{L^2}^2\ds
\\&\leq
c(\nu)
\mathbb{E}
\int_0^{t\wedge   \tau_R} 
\big(
\Vert    
    P^N\bu
   -
    \bu^N 
 \Vert_{H^1}^2
 +
 \Vert    
    Q^N\bu  
 \Vert_{H^1}^2
 \big)
 \\&\times
 \big(1+\Vert \bu^N\Vert_{H^2}^2+\Vert \bu\Vert_{H^2}^2\big)  \ds
 \\&
 { 
+c(\nu,\bu_0,K)\mathbb{E}e^{-2\mathtt{t} N}
}
\end{aligned}
\end{equation}
for  $\mathtt{t}=\delta_1\wedge \tau_R$.
{ 
Now if $\mathtt{t}=\delta_1$ as given in \eqref{qNEst}, then $\mathbb{E}e^{-2\mathtt{t} N}=e^{-2\delta_1 N}$. On the other hand, if  $\mathtt{t}=\tau_R$, then we can use the strict positivity of $\tau_R$ to infer the existence of a deterministic constant $\delta_2>0$ such that $\mathbb{P}[\tau_R\geq \delta_2]>0$. Consequently, it also follows that
\begin{align*}
    \mathbb{E}e^{-2\mathtt{t} N}\leq \mathbb{E}[e^{-2\delta_2 N}\bm{1}_{\tau_R\geq \delta_2}]\leq \mathbb{E}[e^{-2\delta_2 N}]\mathbb{P}[{\tau_R\geq \delta_2}]
    \leq  C e^{-2\delta_2 N}
\end{align*}
when $\mathtt{t}=\tau_R$.
If we now use the definition of the stopping time $\tau_R$,  then it follows from Gr\"onwall's lemma that
\begin{equation}
\begin{aligned}
\label{solenoidalDifference1b}
\mathbb{E} \Big(
\Vert  (P^N  \bu-\bu^N)(t\wedge   \tau_R) 
\big\Vert_{H^1}^2
&+
\Vert ( Q^N  \bu) (t\wedge   \tau_R) 
\big\Vert_{H^1}^2
\Big)
\\&\leq
e^{c(\nu)(T+
R
 )}
c(\nu,\bu_0,K)\,e^{-2\delta N},
\end{aligned}
\end{equation}
where $\delta=\min\{\delta_1, \delta_2\}$.
}
Our desired result thus follow due to \eqref{splitEst}.

\section{Appendix}
\label{sec:appendix}
\noindent In this section, we give the proofs of the results stated in Section \ref{sec:preparation}.
\begin{proof}[Proof of Lemma \ref{lem:h1convectiveX}]
The proof can be directly deduced from the proof of \cite[Theorem 5.3]{oliver1996mathematical} where the Gevrey space $D(e^{\varphi  A^{1/2}} : \mathbb{H}^r(\mathbb{T}^d))$ is shown to be a Banach algebra when $r>d/2$. The condition $r>d/2$ is only used to obtain \cite[(5.22)]{oliver1996mathematical}. Our desired result is however obtained from \cite[(5.21)]{oliver1996mathematical}.
See also, the original proof in \cite[Lemma 1]{ferrari1998gevrey}.
For completeness, however, we reproduce the proof below.
\begin{equation}
\begin{aligned}
\Vert A^{1/2} e^{\varphi A^{1/2}} (\bu\cdot\bv)\Vert_{L^2}^2
&=
\sum_{\bl}
\bigg\vert \sum_{\bj+\bk=\bl}\hat{\bu}_{\bj} \cdot\hat{\bv}_{\bk}\bigg\vert^2\vert\bl\vert^2 e^{2\varphi \vert \bl\vert}
\\&\leq
\sum_{\bl}
\bigg( \sum_{\bj+\bk=\bl}\vert\hat{\bu}_{\bj} \vert \,\vert\hat{\bv}_{\bk}\vert\, \vert\bl\vert e^{\varphi \vert \bl\vert}\bigg)^2
\\&\lesssim
\sum_{\bl}
\bigg( \sum_{\bj+\bk=\bl}\vert\hat{\bu}_{\bj}\vert\, \vert\bj\vert  e^{\varphi \vert \bj\vert}\vert\hat{\bv}_{\bk}\vert  e^{\varphi \vert \bk\vert}\bigg)^2
\\&+
\sum_{\bl}
\bigg( \sum_{\bj+\bk=\bl}\vert\hat{\bu}_{\bj}\vert e^{\varphi \vert \bj\vert} \vert\hat{\bv}_{\bk} \vert\,\vert\bk\vert e^{\varphi \vert \bk\vert}\bigg)^2.
\end{aligned}
\end{equation}
If we now use Young's convolution inequality followed by Jensen's inequality, we obtain,
\begin{equation}
\begin{aligned}
\Vert A^{1/2} e^{\varphi A^{1/2}} (\bu\cdot\bv)\Vert_{L^2}^2
&\lesssim
\sum_{\bj}\vert\hat{\bu}_{\bj}\vert^2 \vert\bj\vert ^2 e^{2\varphi \vert \bj\vert}
\bigg( \sum_{\bk}\vert\hat{\bv}_{\bk}\vert  e^{\varphi \vert \bk\vert}\bigg)^2
\\&+
\sum_{\bk}\vert\hat{\bv}_{\bk} \vert^2\vert\bk\vert^2 e^{2\varphi \vert \bk\vert}
\bigg( \sum_{\bj}\vert\hat{\bu}_{\bj}\vert e^{\varphi \vert \bj\vert} \bigg)^2
\\&\lesssim
\Vert A^{1/2} e^{\varphi A^{1/2}} \bu\Vert_{L^2}^2
\Vert  e^{\varphi A^{1/2}} \bv\Vert_{L^2}^2
\\&+
\Vert A^{1/2} e^{\varphi A^{1/2}} \bv\Vert_{L^2}^2
\Vert  e^{\varphi A^{1/2}} \bu\Vert_{L^2}^2.
\end{aligned}
\end{equation}
\end{proof}

{ 
\begin{proof}[Proof of Lemma \ref{cor:xiAppendix}] 
To simplify notations, for $\bj,\bk \in \mathbb{Z}^d$,
we let  $j:=\vert\mathbf{j}\vert$ and $k:=\vert\mathbf{k}\vert$ and note that for $\bn \in  \mathbb{Z}^d$,
\begin{align*}
\frac{1}{(2\pi)^d}\int_{\mathbb{T}^d}e^{i\bn\cdot\bx}\dx
= 
  \begin{cases}
    0      & \quad \text{if } \bn\neq\bm{0},\\
    1  & \quad \text{if } \bn=\bm{0}.
  \end{cases}
\end{align*}
Combining this information with $\langle f,g\rangle=\frac{1}{(2\pi)^d}\int_{\mathbb{T}^d}f(x)\overline{g}(x)\dx$, we obtain for
\begin{align*}
\bm{\xi}_k(\bx)= \sum_{\bj\in  \mathbb{Z}^d}\hat{\bm{\xi}_k}_{\bj} e^{i\bj\cdot\bx}\qquad  \text{and} \qquad 
\bu(\bx)= \sum_{\bk\in  \mathbb{Z}^d}\hat{\bu}_{\bk} e^{i\bk\cdot\bx}
\end{align*}
that 
\begin{align*}
\big\langle&
 A^r   e^{\varphi  A^{1/2s}  }
    (\bm{\xi}_k \cdot \nabla  (\bm{\xi}_k \cdot \nabla)\bu )
\, ,\,   A^r   e^{\varphi  A^{1/2s}  } \bu
\big\rangle
\\&=
\frac{1}{(2\pi)^d}
\Big\langle 
  \sum_{\bj}  \hat{\bm{\xi}_k}_{\bj}  e^{i\bj\cdot\bx} \cdot    \sum_{\bj}i\bj\,  \hat{\bm{\xi}_k}_{\bj}  e^{i\bj\cdot\bx} \cdot \sum_{\bk}i\bk\,\hat{\bu}_\bk e^{i\bk\cdot\bx}
\,,\,  
 \sum_{\bk}k^{4r}e^{2\varphi  k^{1/s}  }\hat{\bu}_{\bk} e^{i\bk\cdot\bx} 
\Big\rangle 
\\&= 
\sum_{\bn}
\sum_{2(\bj+\bk)=\bn}
   (\hat{\bm{\xi}_k}_{\bj} \cdot  i\bj) (\hat{\bm{\xi}_k}_{\bj} \cdot  i\bk)k^{4r} e^{2\varphi  k^{1/s}} \vert \hat{\bu}_\bk\vert^2\frac{1}{(2\pi)^d}\int_{\mathbb{T}^d} e^{i\bn\cdot\bx} \dx 
  \\&=  
\sum_{2(\bj+\bk)=\bm{0}}k^{4r} e^{2\varphi  k^{1/s}  }
  (\hat{\bm{\xi}_k}_{\bj} \cdot  i\bj) (\hat{\bm{\xi}_k}_{\bj} \cdot  i\bk) \vert \hat{\bu}_\bk\vert^2
    \\&\overset{\bj=-\bk}{=} - 
\sum_{ \bk }
k^{4r} e^{2\varphi  k^{1/s}  } \vert\hat{\bm{\xi}_k}_{-\bk} \cdot  i\bk\vert^2 \vert \hat{\bu}_\bk\vert^2
  \\&\overset{\overline{\hat{\bff}}_{\bk}= \hat{\bff}_{-\bk}}{=} - 
\sum_{ \bk }
k^{4r} e^{2\varphi  k^{1/s}  } \vert\overline{\hat{\bm{\xi}_k}}_{\bk} \cdot  i\bk\vert^2 \vert \hat{\bu}_\bk\vert^2
 \\&=- 
\big\Vert
 A^r  e^{\varphi  A^{1/2s}  }
    ((\bm{\xi}_k \cdot \nabla)\bu)
\big\Vert_{L^2}^2
\end{align*}
\end{proof}
}

{ 
Next, we present the proof of Lemma \ref{lem:comparison} for completeness. The proof essentially follow the same argument as \cite[Lemma 5.1]{glatt2009strong} with the obvious modification with respect to the exponential weight
\begin{proof}[Proof of Lemma \ref{lem:comparison}]
Given that \eqref{expoZero} holds, we can find an increasing family $(N_l)_{l\in\mathbb{N}_0}$ such that
\begin{align*}
\mathbb{E} \Vert  \bu^{N_{l+1}} -\bu^{N_l}  \Vert_{\mathcal{E}(\mathcal{T}^M_{N_{l+1}N_l})} \leq 2^{-2l}.
\end{align*}
With this, we define the stopping time
\begin{align*}
\tau_l:=\inf \bigg\{  t\in [0,T]\,:\,
\Vert \bu^{N_l} \Vert_{\mathcal{E}(t)}
>
\Vert   e^{\varphi(0)A^{1/2s} }\bu^{N_l}_0 \Vert_{H^r} 
+ M-1+2^{-l}
\bigg\} \wedge T
\end{align*}
so that $\tau_l\wedge\tau_{l+1}\in \mathcal{T}^M_{N_{l+1}N_l}$ and thus, by Chebyshev's inequality,
\begin{align*}
\mathbb{P} \big(\Vert  \bu^{N_{l+1}} -\bu^{N_l}  \Vert_{\mathcal{E}(\tau_l\wedge\tau_{l+1})} 
\geq 2^{-(l+2)}\big)
&\leq 2^{l+2}
\mathbb{E} \Vert  \bu^{N_{l+1}} -\bu^{N_l}  \Vert_{\mathcal{E}(\tau_l\wedge\tau_{l+1})}  
\\&
\leq  2^{l+2}2^{-2l}
\\&=2^{2-l}.
\end{align*}
Given the finiteness of this probability, by setting
\begin{align}
\label{omegaN}
\Omega_p:=\bigcap_{l=p}^\infty\big\{\Vert  \bu^{N_{l+1}} -\bu^{N_l}  \Vert_{\mathcal{E}(\tau_l\wedge\tau_{l+1})} 
< 2^{-(l+2)}\big\},
\end{align}
we can use Borel--Cantelli lemma to conclude that 
\begin{equation}
\begin{aligned}
\label{borelCantelli}
0&=\mathbb{P}\bigg(\bigcap_{p=1}^\infty\bigcup_{l=p}^\infty\big\{\Vert  \bu^{N_{l+1}} -\bu^{N_l}  \Vert_{\mathcal{E}(\tau_l\wedge\tau_{l+1})} 
\geq2^{-(l+2)}\big\}
\bigg) 
\\&=\mathbb{P}\bigg(\bigcap_{p=1}^\infty\Omega_p
^c\bigg)
\\&=1-\mathbb{P}\bigg(\bigcup_{p=1}^\infty\Omega_p
\bigg)
\end{aligned}
\end{equation}
and thus, the set $\tilde{\Omega}=\bigcup_{p=1}^\infty\Omega_p$ has full measure.
\\
Now consider the set $\{\tau_l<\tau_{l+1}\}\cap \Omega_p$ where $l\geq p$, so that $\tau_l<T$. By  using the continuity of $\Vert \bu^{N_l} \Vert_{\mathcal{E}(t)}$ in $t$ (in the sense of Remark \ref{rem:weakContinuity}), it follows that
\begin{align}
\label{a1}
\Vert \bu^{N_l} \Vert_{\mathcal{E}(\tau_l)}
=
\Vert   e^{\varphi(0)A^{1/2s} }\bu^{N_l}_0 \Vert_{H^r} 
+ M-1+2^{-l}.
\end{align}
On the other hand, we obtain on $\Omega_p$ (c.f. \eqref{omegaN}) that
\begin{align}
&\Vert \bu^{N_l}  \Vert_{\mathcal{E}(\tau_l\wedge\tau_{l+1})} 
-
\Vert  \bu^{N_{l+1}} \Vert_{\mathcal{E}(\tau_l\wedge\tau_{l+1})} 
< 2^{-(l+2)},
\label{a2}
\\&
\Vert   e^{\varphi(0)A^{1/2s} }\bu^{N_{l+1}}_0 \Vert_{H^r} 
-
\Vert   e^{\varphi(0)A^{1/2s} }\bu^{N_l}_0 \Vert_{H^r} 
< 2^{-(l+2)}.
\label{a3}
\end{align}
If we now combine \eqref{a1}-\eqref{a3}, then we obtain on the set $\{\tau_l<\tau_{l+1}\}\cap \Omega_p$ that
\begin{equation}
\begin{aligned}
\label{a4}
\Vert  \bu^{N_{l+1}} \Vert_{\mathcal{E}(\tau_l\wedge\tau_{l+1})}
&>
\Vert \bu^{N_l}  \Vert_{\mathcal{E}(\tau_l\wedge\tau_{l+1})} 
- 2^{-(l+2)}
\\&
=
\Vert \bu^{N_l}  \Vert_{\mathcal{E}(\tau_l)} 
- 2^{-(l+2)}
\\&
=
\Vert   e^{\varphi(0)A^{1/2s} }\bu^{N_l}_0 \Vert_{H^r} 
+ M-1+2^{-l}
- 2^{-(l+2)}
\\&
>
\Vert   e^{\varphi(0)A^{1/2s} }\bu^{N_{l+1}}_0 \Vert_{H^r} 
+ M-1+2^{-(l+1)}.
\end{aligned}
\end{equation}
Similarly, on $\{\tau_l<\tau_{l+1}\}\cap \Omega_p$, we can also show that 
\begin{align}
\label{a5}
\Vert  \bu^{N_{l+1}} \Vert_{\mathcal{E}(\tau_l\wedge\tau_{l+1})}
\leq
\Vert   e^{\varphi(0)A^{1/2s} }\bu^{N_{l+1}}_0 \Vert_{H^r} 
+ M-1+2^{-(l+1)}
\end{align}
by noticing that the reverse inequalities
\begin{align*}
&\Vert  \bu^{N_{l+1}} \Vert_{\mathcal{E}(\tau_l\wedge\tau_{l+1})} 
-
\Vert \bu^{N_l}  \Vert_{\mathcal{E}(\tau_l\wedge\tau_{l+1})} 
< 2^{-(l+2)}, 
\\&
\Vert   e^{\varphi(0)A^{1/2s} }\bu^{N_l}_0 \Vert_{H^r} 
-
\Vert   e^{\varphi(0)A^{1/2s} }\bu^{N_{l+1}}_0 \Vert_{H^r} 
< 2^{-(l+2)}. 
\end{align*}
also holds on $\Omega_p$.
The two estimates \eqref{a4} and \eqref{a5} implies that $\{\tau_l<\tau_{l+1}\}\cap \Omega_p=\emptyset$ which further implies that for every $l\geq p$ and $\omega\in \Omega_p$, we have that
\begin{align}
\label{decreasingStoppingTime}
\tau_{l+1}(\omega)\leq \tau_l(\omega).
\end{align}
Thus, by combining \eqref{borelCantelli} with the decreasing sequence \eqref{decreasingStoppingTime}, we obtain that
\begin{align*}
\tau_l\rightarrow\tau \qquad a.s.
\end{align*}
with 
\begin{align*}
\tau\leq T\qquad a.s.
\end{align*}
following due to the definition of $\tau_l$. To show \eqref{strictlyPositiveStopTime}, it remains to show that $\tau>0$ a.s. or equivalently that $\mathbb{P}(\tau=0)=0$. For this, we fix $\varepsilon>0$ with $T>\varepsilon>0$ so that
\begin{align*}
\liminf_{l\rightarrow\infty} \{\tau_l<\varepsilon \}
&=
\{\tau<\varepsilon\}
\\&
\subset
\big\{ \Vert \bu^{N_l} \Vert_{\mathcal{E}(\tau_l\wedge \varepsilon)}
=
\Vert   e^{\varphi(0)A^{1/2s} }\bu^{N_l}_0 \Vert_{H^r} 
+ M-1+2^{-l} \big\}
\\&
\subset
\big\{ \Vert \bu^{N_l} \Vert_{\mathcal{E}(\tau_l\wedge \varepsilon)}
>
\Vert   e^{\varphi(0)A^{1/2s} }\bu^{N_l}_0 \Vert_{H^r} 
+ M-1 \big\}.
\end{align*}
Therefore,
\begin{align*}
\mathbb{P}( \tau=0)
&=
\lim_{\varepsilon\rightarrow0}\mathbb{P}( \tau<\varepsilon)
\\&=
\lim_{\varepsilon\rightarrow0}\mathbb{P}\Big(\liminf_{l\rightarrow\infty} \{\tau_l<\varepsilon \}
\Big)
\\
&\leq
\lim_{\varepsilon\rightarrow0}
\liminf_{l\rightarrow\infty} \mathbb{P}(\tau_l<\varepsilon  )
\\
&\leq
\lim_{\varepsilon\rightarrow0}
\limsup_{l\rightarrow\infty} \mathbb{P}(\tau_l<\varepsilon )
\\
&\leq
\lim_{\varepsilon\rightarrow0}
\sup_{l \in\mathbb{N}_0} \mathbb{P}\Big(\Vert \bu^{N_l} \Vert_{\mathcal{E}(\tau_l\wedge \varepsilon)}
>
\Vert   e^{\varphi(0)A^{1/2s} }\bu^{N_l}_0 \Vert_{H^r} 
+ M-1
\Big)
\\&\overset{\eqref{probZero}}{=}0.
\end{align*}
This completes the proof of \eqref{strictlyPositiveStopTime}.
\\
Now, to show \eqref{cauchyZero}, we observe from \eqref{decreasingStoppingTime} that  for any $\omega\in \tilde{\Omega}$, we can choose $p=p(\omega)$ so that 
whenever $l\geq p$, we have that $\omega\in \Omega_p$ and
\begin{align}
\label{decreasingStoppingTime1}
\tau(\omega)\leq
\tau_{l+1}(\omega)\leq \tau_l(\omega).
\end{align}
Thus, it follows from \eqref{omegaN} that
\begin{align*}
\Vert  \bu^{N_{l+1}} -\bu^{N_l}  \Vert_{\mathcal{E}(\tau(\omega))} 
\leq
\Vert  \bu^{N_{l+1}} -\bu^{N_l}  \Vert_{\mathcal{E}(\tau_l(\omega)\wedge\tau_{l+1}(\omega))} 
< 2^{-(l+2)}.
\end{align*}
We can, therefore, find a process $\bu(\cdot)=\bu(\cdot\wedge \tau)\in \mathcal{E}(\tau)$ such that \eqref{cauchyZero} holds.
\\
Finally, to show \eqref{cauchyZeroM}, we use \eqref{a1}, \eqref{a5} and \eqref{decreasingStoppingTime1} and obtain
\begin{align*}
\bm{1}_{\Omega_l}\Vert \bu^{N_l}  \Vert_{\mathcal{E}(\tau )} 
&\leq
 \bm{1}_{\Omega_l}\Vert  \bu^{N_{l+1}} \Vert_{\mathcal{E}(\tau )}
+ 2^{-(l+2)}
\\
&\leq 
\Vert   e^{\varphi(0)A^{1/2s} }\bu^{N_{l+1}}_0 \Vert_{H^r} 
+ M-1+2^{-(l+1)}+ 2^{-(l+2)}
\\
&\leq 
\Vert   e^{\varphi(0)A^{1/2s} }\bu^{N_{l+1}}_0 \Vert_{H^r} 
+ M
\\
&\leq 
\sup_N
\Vert   e^{\varphi(0)A^{1/2s} }\bu^{N_{l}}_0 \Vert_{H^r} 
+ M
\end{align*}
almost surely. 
\end{proof}
}

\noindent Finally, we present a useful identify for final dimensional orthogonal projection. In particular, for a Hilbert space $V$, we note that since the identity
\begin{align*}
\langle (P^N-P^n)f^N\,,\,P^n(f^N-f^n)\rangle_V
=
\langle P^n(P^N-P^n)f^N\,,\,P^n(f^N-f^n)\rangle_V=0
\end{align*}
holds for all $f^N,f^n\in V$ and $N\geq n$, it follows that
\begin{equation}
\begin{aligned}
\label{projectionIdentity}
\Vert P^Nf^N-P^nf^n \Vert_V^2
&=\Vert (P^N-P^n)f^N+P^n(f^N-f^n) \Vert_V^2
\\
&= \Vert (P^N-P^n)f^N  \Vert_V^2
+
\Vert P^n(f^N-f^n) \Vert_V^2
\\
&=\Vert Q^nP^Nf^N  \Vert_V^2
+
\Vert P^n(f^N-f^n) \Vert_V^2.
\end{aligned}
\end{equation}

\section*{Acknowledgements}
\subsection{Funding} This work has been supported by the European Research Council (ERC) Synergy grant STUOD-DLV-856408.
\subsection{Author Contribution} All authors wrote and reviewed the manuscript.
\subsection{Conflict of Interest} The authors declare that they have no conflict of interest.
\subsection{Data Availability Statement} Data sharing is not applicable to this article as no datasets were generated or
analysed during the current study.



\begin{thebibliography}{10}
\providecommand{\url}[1]{{#1}}
\providecommand{\urlprefix}{URL }
\expandafter\ifx\csname urlstyle\endcsname\relax
  \providecommand{\doi}[1]{DOI~\discretionary{}{}{}#1}\else
  \providecommand{\doi}{DOI~\discretionary{}{}{}\begingroup
  \urlstyle{rm}\Url}\fi

\bibitem{bae2012analyticity}
Bae, H., Biswas, A., Tadmor, E.: Analyticity and decay estimates of the
  {N}avier-{S}tokes equations in critical {B}esov spaces.
\newblock Arch. Ration. Mech. Anal. \textbf{205}(3), 963--991 (2012).
\newblock \doi{10.1007/s00205-012-0532-5}.
\newblock \urlprefix\url{https://doi.org/10.1007/s00205-012-0532-5}

\bibitem{bensoussan}
Bensoussan, A.: Stochastic navier-stokes equations." Acta Applicandae Mathematica 38 267--304 (1995).

\bibitem{biswas}
Biswas A.: Gevrey regularity for a class of dissipative equations with applications to decay. \newblock Journal of Differential Equations.
 253(10), 2739-64. (2012)

\bibitem{breit2019stochastic}
Breit, D., Mensah, P.R.: Stochastic compressible {E}uler equations and inviscid
  limits.
\newblock Nonlinear Anal. \textbf{184}, 218--238 (2019).
\newblock \doi{10.1016/j.na.2019.02.013}.
\newblock \urlprefix\url{https://doi.org/10.1016/j.na.2019.02.013}

\bibitem{brzeniak}
Brzeźniak, Z., Capiński, M., Flandoli, F.: Stochastic Navier-Stokes equations with multiplicative noise. Stochastic Analysis and Applications, 10(5),  523--532 (1992).

\bibitem{cao2000gevrey}
Cao, C., Rammaha, M.A., Titi, E.S.: Gevrey regularity for nonlinear analytic
  parabolic equations on the sphere.
\newblock J. Dynam. Differential Equations \textbf{12}(2), 411--433 (2000).
\newblock \doi{10.1023/A:1009072526324}.
\newblock \urlprefix\url{https://doi.org/10.1023/A:1009072526324}

\bibitem{capinski}
Capi\'{n}ski, M. and Cutland, N.: Stochastic Navier-Stokes equations. Acta Applicandae Mathematica, 25, 59--85 (1991).

\bibitem{capinski1994stochastic}
Capi\'{n}ski, M., Gatarek, D.: Stochastic equations in {H}ilbert space with
  application to {N}avier-{S}tokes equations in any dimension.
\newblock J. Funct. Anal. \textbf{126}(1), 26--35 (1994).
\newblock \doi{10.1006/jfan.1994.1140}.
\newblock \urlprefix\url{https://doi.org/10.1006/jfan.1994.1140}


\bibitem{doering1995exponential}
Doering, C.R., Titi, E.S.: Exponential decay rate of the power spectrum for
  solutions of the {N}avier-{S}tokes equations.
\newblock Phys. Fluids \textbf{7}(6), 1384--1390 (1995).
\newblock \doi{10.1063/1.868526}.
\newblock \urlprefix\url{https://doi.org/10.1063/1.868526}

\bibitem{duan}
Duan, J., Titi, E. S., Holmes, P.: Regularity, approximation and asymptotic dynamics for a generalized {G}inzburg-{L}andau equation. \newblock Nonlinearity, 6(6), 915 (1993).

\bibitem{ferrari1998gevrey}
Ferrari, A.B., Titi, E.S.: Gevrey regularity for nonlinear analytic parabolic
  equations.
\newblock Comm. Partial Differential Equations \textbf{23}(1-2), 1--16 (1998).
\newblock \doi{10.1080/03605309808821336}.
\newblock \urlprefix\url{https://doi.org/10.1080/03605309808821336}

\bibitem{flandoli2021high}
Flandoli, F., Luo, D.: High mode transport noise improves vorticity blow-up control in 3D Navier–Stokes equations. Probability Theory and Related Fields, 180, 309--363 (2021).


\bibitem{flandoli1995martingale}
Flandoli, F., Gatarek, D.: Martingale and stationary solutions for stochastic
  {N}avier-{S}tokes equations.
\newblock Probab. Theory Related Fields \textbf{102}(3), 367--391 (1995).
\newblock \doi{10.1007/BF01192467}.
\newblock \urlprefix\url{https://doi.org/10.1007/BF01192467}

\bibitem{foias1989gevrey}
Foias, C., Temam, R.: Gevrey class regularity for the solutions of the
  {N}avier-{S}tokes equations.
\newblock J. Funct. Anal. \textbf{87}(2), 359--369 (1989).
\newblock \doi{10.1016/0022-1236(89)90015-3}.
\newblock \urlprefix\url{https://doi.org/10.1016/0022-1236(89)90015-3}

\bibitem{foias2001navier}
Foias, C., Manley, O., Rosa, R., Temam, R.: Navier-Stokes equations and turbulence (Vol. 83). Cambridge University Press. (2001)

\bibitem{glatt2009strong}
Glatt-Holtz, N., Ziane, M.: Strong pathwise solutions of the stochastic
  {N}avier-{S}tokes system.
\newblock Adv. Differential Equations \textbf{14}(5-6), 567--600 (2009)

\bibitem{glatt2014local}
Glatt-Holtz, N.E., Vicol, V.C.: Local and global existence of smooth solutions
  for the stochastic {E}uler equations with multiplicative noise.
\newblock Ann. Probab. \textbf{42}(1), 80--145 (2014).
\newblock \doi{10.1214/12-AOP773}.
\newblock \urlprefix\url{https://doi.org/10.1214/12-AOP773}

\bibitem{holm}
Holm D.D.: Stochastic modelling in fluid dynamics: Itô versus Stratonovich
\newblock Proc. R. Soc. A.476 (2020)

\bibitem{holm1}
Holm D.D.: Variational principles for stochastic fluid dynamics
\newblock Proc. R. Soc. A.471 (2015)

\bibitem{kukavica2018galerkin}
Kukavica, I., U\u{g}urlu, K., Ziane, M.: On the {G}alerkin approximation and
  strong norm bounds for the stochastic {N}avier-{S}tokes equations with
  multiplicative noise.
\newblock Differential Integral Equations \textbf{31}(3-4), 173--186 (2018).
\newblock \urlprefix\url{https://projecteuclid.org/euclid.die/1513652422}

\bibitem{kukavica2009radius}
Kukavica, I., Vicol, V.: On the radius of analyticity of solutions to the
  three-dimensional {E}uler equations.
\newblock Proc. Amer. Math. Soc. \textbf{137}(2), 669--677 (2009).
\newblock \doi{10.1090/S0002-9939-08-09693-7}.
\newblock \urlprefix\url{https://doi.org/10.1090/S0002-9939-08-09693-7}

\bibitem{levermore1997analyticity}
Levermore, C.D., Oliver, M.: Analyticity of solutions for a generalized {E}uler
  equation.
\newblock J. Differential Equations \textbf{133}(2), 321--339 (1997).
\newblock \doi{10.1006/jdeq.1996.3200}.
\newblock \urlprefix\url{https://doi.org/10.1006/jdeq.1996.3200}

\bibitem{lions1996mathematical}
Lions, P.L.: Mathematical topics in fluid mechanics. {V}ol. 1, \emph{Oxford
  Lecture Series in Mathematics and its Applications}, vol.~3.
\newblock The Clarendon Press, Oxford University Press, New York (1996).
\newblock Incompressible models, Oxford Science Publications

\bibitem{lord2004numerical}
Lord, G.J., Rougemont, J.: A numerical scheme for stochastic {PDE}s with
  {G}evrey regularity.
\newblock IMA J. Numer. Anal. \textbf{24}(4), 587--604 (2004).
\newblock \doi{10.1093/imanum/24.4.587}.
\newblock \urlprefix\url{https://doi.org/10.1093/imanum/24.4.587}

\bibitem{mattingly2002dissipative}
Mattingly, J.C.: The dissipative scale of the stochastics {N}avier-{S}tokes
  equation: regularization and analyticity.
\newblock pp. 1157--1179 (2002).
\newblock \doi{10.1023/A:1019799700126}.
\newblock \urlprefix\url{https://doi.org/10.1023/A:1019799700126}.
\newblock Dedicated to David Ruelle and Yasha Sinai on the occasion of their
  65th birthdays
 
 \bibitem{memin} 
M\'{e}min, E.: Fluid flow dynamics under location uncertainty. Geophysical \& Astrophysical Fluid Dynamics, 108(2), 119--146 (2014).

\bibitem{mikulevicius2009strong}
Mikulevicius, R.: On strong {$H_2^1$}-solutions of stochastic {N}avier-{S}tokes
  equation in a bounded domain.
\newblock SIAM J. Math. Anal. \textbf{41}(3), 1206--1230 (2009).
\newblock \doi{10.1137/0807433747}.
\newblock \urlprefix\url{https://doi.org/10.1137/0807433747}

\bibitem{mikulevicius2004stochastic}
Mikulevicius, R., Rozovskii, B.L.: Stochastic {N}avier-{S}tokes equations for
  turbulent flows.
\newblock SIAM J. Math. Anal. \textbf{35}(5), 1250--1310 (2004).
\newblock \doi{10.1137/S0036141002409167}.
\newblock \urlprefix\url{https://doi.org/10.1137/S0036141002409167}

\bibitem{oliver1996mathematical}
Oliver, M.: A mathematical investigation of models of shallow water with a
  varying bottom.
\newblock The University of Arizona (1996)

\bibitem{paicu2011analyticity}
Paicu, M., Vicol, V.: Analyticity and {G}evrey-class regularity for the
  second-grade fluid equations.
\newblock J. Math. Fluid Mech. \textbf{13}(4), 533--555 (2011).
\newblock \doi{10.1007/s00021-010-0032-z}.
\newblock \urlprefix\url{https://doi.org/10.1007/s00021-010-0032-z}

\bibitem{robinson}
Robinson, J.C., Rodrigo, J.L., Sadowski, W. The three-dimensional Navier–Stokes equations: Classical theory. Vol. 157. Cambridge university press (2016).

\bibitem{rockner2012stochastic}
R\"{o}ckner, M., Zhang, T.: Stochastic 3{D} tamed {N}avier-{S}tokes equations:
  existence, uniqueness and small time large deviation principles.
\newblock J. Differential Equations \textbf{252}(1), 716--744 (2012).
\newblock \doi{10.1016/j.jde.2011.09.030}.
\newblock \urlprefix\url{https://doi.org/10.1016/j.jde.2011.09.030}

\bibitem{temam1995navier}
Temam, R.: {N}avier--{S}tokes equations and nonlinear functional analysis.
\newblock SIAM (1995).

\end{thebibliography}

\end{document}